\def\scaleint#1{\vcenter{\hbox{\scaleto[3ex]{\displaystyle\int}{#1}}}}
\numberwithin{equation}{section}
\newtheorem{thm}{Theorem}[section]
\newtheorem{lem}[thm]{Lemma}
\newtheorem{prop}[thm]{Proposition}
\theoremstyle{remark}
\newcommand{\R}{\mathbb{R}}
\def\qq#1{\qquad \mbox{#1}\quad}
\def\q#1{\quad \mbox{#1}\ }
\newcommand{\al}{\alpha}
\newcommand{\be}{\beta}
\newcommand{\De}{\Delta}
\newcommand{\de}{\delta}
\newcommand{\e}{\varepsilon}
\newcommand{\eps}{\e }
\newcommand{\g}{\gamma}
\newcommand{\Ga}{\Gamma}
\newcommand{\la}{\lambda}
\newcommand{\Om}{\Omega}
\newcommand{\Omb}{\overline{\Om}}
\newcommand{\p}{\partial}
\newcommand{\s}{\sigma}
\newcommand{\te}{\theta}
\newcommand{\vf}{\varphi}
\newcommand{\z}{\zeta}
\title[
Elliptic systems with superlinear nonlinearities on the boundary]{
% Bifurcation Results for a 
Positive solutions of elliptic systems with superlinear nonlinearities on the boundary} 
\author[S.~Bandyopadhyay]{Shalmali Bandyopadhyay}
\address{Department of Mathematics and Statistics, University of North Carolina Greensboro, Greensboro, NC, USA}
\email{m\_chhetr@uncg.edu}
\author[M.~Chhetri]{Maya Chhetri}
\address{Department of Mathematics and Statistics, University of North Carolina Greensboro, Greensboro, NC, USA}
\email{m\_chhetr@uncg.edu}
\author[M.~Chhetri]{Maya Chhetri}
\address{Department of Mathematics and Statistics, University of North Carolina Greensboro, Greensboro, NC, USA}
\email{m\_chhetr@uncg.edu}
\author[M.~Chhetri]{Maya Chhetri}
\address{Department of Mathematics and Statistics, University of North Carolina Greensboro, Greensboro, NC, USA}
\email{m\_chhetr@uncg.edu}
\author[R.~Pardo]{Rosa Pardo}
\address{Department of Mathematics and Statistics, University of North Carolina Greensboro, Greensboro, NC, USA}
\email{m\_chhetr@uncg.edu}
\author[S.~Bandyopadhyay]{Shalmali Bandyopadhyay}
\address{S. Bandyopadhyay \newline The University of Tennessee at Martin, Martin, TN, USA}
\email{sbandyo5@utm.edu}
\author[M.~Chhetri]{Maya Chhetri}
\address{M. Chhetri \newline 
Department of Mathematics and Statistics, UNC Greensboro, Greensboro, NC, USA}
\email{m\_chhetr@uncg.edu}
\thanks{Corresponding author: M.~Chhetri (m\_chhetr@uncg.edu)}
\author[B.~B.~Delgado]{Briceyda B. Delgado}
\address{B.~.B.~Delgado \newline 
%University of Aguascalientes, Aguascalientes, Mexico\\
%Current address: 
INFOTEC, Centro de Investigación e Innovación en Tecnologías \\de la Información y Comunicación, Aguascalientes, Mexico}
\email{briceyda.delgado@infotec.edu.mx}
\author[N.~Mavinga]{Nsoki Mavinga}
 \address{N.~Mavinga \newline 
 Department of Mathematics and Statistics, Swarthmore College, Swarthmore, PA, USA}
 \email{nmaving1@swarthmore.edu}
 \author[R.~Pardo]{Rosa Pardo}
 \address{R.~Pardo \newline 
 Departamento de An\'alisis Matem\'atico y Matem\'atica Aplicada\\ 
 Universidad Complutense de Madrid, 28040--Madrid, Spain}
 \email{rpardo@ucm.es}
\begin{document}
%\linenumbers

\begin{abstract}

We consider  elliptic  systems with superlinear and subcritical boundary conditions and a bifurcation parameter as a multiplicative factor. By combining the rescaling method with degree theory and elliptic regularity theory, we prove the existence of a connected branch of positive  weak solutions that bifurcates from infinity as the parameter approaches zero. Furthermore, under additional conditions on the nonlinearities near zero, we obtain a global connected branch of positive solutions bifurcating from zero, which possesses a unique bifurcation point from infinity  when the parameter is zero. Finally, we analyze the behavior of this branch and discuss the number of positive weak solutions with respect to the parameter using bifurcation theory, degree theory,  and sub- and super-solution methods. 
\end{abstract}
\maketitle
\noindent \textbf{Keywords:} Elliptic systems; superlinear and subcritical; nonlinear boundary condition; bifurcation theory; degree theory; monotonicity methods.\\
\noindent \textbf{MSC 2020:} 35J57, 35J65, 35J61, 35B32

\section{Introduction}
Consider the following linear system with nonlinear boundary conditions of the form
\begin{equation}
\label{pde}
	\left.
	\begin{array}{rclll}
-\De u_1+u_1 & = 0 \quad \mbox{in}\quad \Omega\,,&
&\frac{\partial u_1}{\partial \eta}&=\lambda f_1(u_2)\quad \mbox {on}\quad \partial\Omega\,;\\
-\Delta u_2 +u_2 &=  0 \quad \mbox{in}\quad \Omega\,, &
&\frac{\partial u_2}{\partial \eta}&=\lambda f_2(u_1)\quad \mbox {on}\quad \partial\Omega\,;
%\label{sys:v}
\end{array}
	\right\} 
\end{equation}
where $\Omega\subset \mathbb{R}^N$ ($N > 2$) is a bounded domain with a $C^{2,\g}$  boundary $\p\Om$ (with $0<\g<1$), $\partial/\partial \eta:=\eta(x) \cdot \nabla$ denotes the outer normal derivative on $\partial\Omega$, and $\lambda>0$ is a bifurcation parameter.  The nonlinear reaction terms $f_i\colon [0,\infty)\rightarrow [0,\infty)$ are  locally Hölder continuous functions, that is, given any positive constant $M_0$, there exists $L$ such that   $|f_i(s)-f_i(t)|\le L|s-t|^{\g}$  for all $s,t\in [-M_0,M_0]$ and $0<\g\le 1$, for $i=1, 2$.
\newline
\par The goal of this work is to investigate the number of positive solutions of \eqref{pde} with respect to the bifurcation parameter $\lambda$. It is well known that the asymptotic behaviors of the nonlinearities near the origin and/or at infinity influence the number of solutions with respect to $\lambda$. Here, the focus will be on the case when $f_1$ and $f_2$ are superlinear and subcritical at infinity, that is, there exist constants $b_i>0$ ($i=1, 2$) such that 
\begin{equation}
   \label{H_inf}
   \tag{${\rm H}_{\infty}$}
\begin{cases}
  \lim_{s\rightarrow \infty}\dfrac{f_1(s)}{s^{p_2}}=b_2, \quad \lim_{s\rightarrow \infty}\dfrac{f_2(s)}{s^{p_1}}=b_1,\\
   1<p_1\,, p_2 \le \frac{N}{N-2} \quad \mbox{but not both  equal to}\quad  \frac{N}{N-2}\,.  
\end{cases}
\end{equation}

\medskip 

\par We are concerned with the existence, multiplicity and bifurcation results for positive weak solutions of \eqref{pde} in $H^1(\Omega) \times H^1(\Omega)$ with respect to the parameter $\lambda$. However, we show that if $f_1$ and $f_2$ satisfy the hypothesis \eqref{H_inf} and $(u_1,u_2)$ is a weak solution, then $u_1,\,u_2 \in C^{2,\alpha}(\Omega)\cap C^{1,\alpha}(\overline{\Omega})$ for some $\alpha \in (0, 1)$, see Theorem~\ref{th:bootstrap}. 
Thus, we can define our solution set by
\begin{equation}
\label{solution:set}
\Sigma:=\{\big(\lambda,(u_1,u_2)\big)\in  (0,\infty)\times \big(C(\overline{\Omega})\big)^2\colon (\lambda, (u_1,u_2)) \mbox{ is a weak solution to }\eqref{pde}\}\,,
\end{equation}
and look for solutions $\big(\lambda,(u_{1},u_{2})\big) \in \Sigma.$

% Scalar elliptic problems with nonlinear boundary conditions have been relatively well studied,  see for instance \cite{BCDMP_nonlbc, Bonder_Rossi_JMAA, Ko_Lee_Sim} and references therein. To extend  well known techniques from the scalar case to the coupled systems  with nonlinear boundary conditions, presents  different mathematical challenges. 
% Our work fills this gap by focusing on systems, extending previous single equation results presented in \cite{BCDMP_JDE_2024}.

\medskip

Throughout the paper,  the norm on the underlying function space $(C(\Omb))^2 $ is given by
\begin{equation}\label{norm}
\|(u_1,u_2)\|_{C(\overline{\Omega})^2}:=\|u_1\|_{C(\Omb)}+\|u_2\|_{C(\Omb)}\,.    
\end{equation} 
Furthermore,  $\mu_1>0$ will denote the first  eigenvalue of the Steklov eigenvalue problem
\begin{equation}\label{steklov}
\left\{
\begin{array}{rcll}
-\Delta \psi +\psi &=&  0 \quad &\mbox{in}\quad \Om\,,\\
\frac{\p \psi}{\p \eta} &=& \mu\psi\quad &\mbox{on}\quad \p \Om\,,
\end{array}
\right. 
\end{equation}
with corresponding eigenfunction $\varphi_1>0$ on $\overline{\Om}$ and $\|\varphi_1\|_{L^{\infty}(\Om)}=1$, see \cite[Prop.~2.3 and Rem.~2.4]{Mav_2012}.

\medskip

\par Our first result shows that by assuming only the superlinear subcritical behaviors at infinity by the nonlinearities $f_1$ and $f_2$, positive solutions are guaranteed for $\lambda$ small.
\begin{thm}
\label{thm:local}
Suppose that \eqref{H_inf} holds. Then, there exists $\widetilde{\lambda}>0$ such that for all $\lambda\in(0,\widetilde{\lambda}]$, \eqref{pde} has a positive weak solution $\big(\lambda,(u_1,u_2)\big)$ %\textcolor{purple}{admits a positive weak solution $(u_1,u_2)$ for $\lambda$} 
such that 
\begin{equation*}%\label{bif:inf}
\|(u_{1}, u_{2})\|_{C(\overline{\Omega})^2}\rightarrow \infty \qq{as} \lambda\rightarrow 0^+.    
\end{equation*} 
%$\|(u_{1}, u_{2})\|\rightarrow \infty$ as $\lambda\rightarrow 0^+$. 
Moreover, there exists a connected component of positive weak solutions of \eqref{pde}, namely $\mathscr{C}^+\subset \Sigma$, bifurcating from infinity at $\lambda=0$, such that 
the projection of $\mathscr{C}^+$ on the parameter space is $(0,\widetilde{\lambda}].$ %(see Fig.~\ref{fig:bif:infty}).
\end{thm}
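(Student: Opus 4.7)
The plan is to turn bifurcation from infinity at $\lambda=0$ into bifurcation from a nontrivial fixed point at $\lambda=0$ of a rescaled problem, and then apply Leray--Schauder degree theory. Introducing the exponents
\[
\alpha_1 := \frac{p_2+1}{p_1p_2-1},\qquad \alpha_2 := \frac{p_1+1}{p_1p_2-1},
\]
both positive since $p_1,p_2>1$, the substitution $v_i := \lambda^{\alpha_i} u_i$ transforms \eqref{pde} into
\[
-\Delta v_i + v_i = 0 \ \text{in } \Omega,\quad \tfrac{\partial v_1}{\partial\eta} = F_1^\lambda(v_2),\ \ \tfrac{\partial v_2}{\partial\eta} = F_2^\lambda(v_1) \ \text{on } \partial\Omega,
\]
where $F_1^\lambda(s) := \lambda^{1+\alpha_1} f_1(\lambda^{-\alpha_2}s)$ and $F_2^\lambda$ is analogous. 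The exponents are chosen exactly so that \eqref{H_inf} yields $F_1^\lambda(s)\to b_2 s^{p_2}$ and $F_2^\lambda(s)\to b_1 s^{p_1}$ as $\lambda\to 0^+$, locally uniformly on $(0,\infty)$; the rescaled system therefore extends continuously to $\lambda=0$ as a Lane--Emden-type system.

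I recast the rescaled problem as a fixed-point equation $v=\mathcal{T}_\lambda(v)$ in $(C(\overline\Omega))^2$, where $\mathcal{T}_\lambda$ composes $(F_1^\lambda,F_2^\lambda)$ with the solution operator of $-\Delta+I$ under prescribed Neumann data. Theorem~\ref{th:bootstrap} together with standard elliptic estimates makes $\mathcal{T}_\lambda$ completely continuous in $v$ and jointly continuous in $(\lambda,v)\in[0,\tilde\lambda]\times (C(\overline\Omega))^2$. The heart of the argument is a uniform a priori bound: there exists $R>0$ such that every positive solution of the rescaled system with $\lambda\in[0,\tilde\lambda]$ satisfies $\|v\|_{C(\overline\Omega)^2}\le R$. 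I would prove this by contradiction and a blow-up: from a sequence $(\lambda_n,v^n)$ with $M_n:=\|v^n\|_\infty\to\infty$, pick maximum points $x_n\in\overline\Omega$ and rescale $w_i^n(y):=\varepsilon_n^{\alpha_i} v_i^n(x_n+\varepsilon_n y)$ with $\varepsilon_n\to 0^+$ chosen to normalize $w^n$. Scale invariance of $\alpha_i$ guarantees that, along a subsequence and after straightening the boundary if $x_n\to\partial\Omega$, the limit $w$ is a bounded nonnegative classical solution of the scale-invariant system
\[
-\Delta w_i=0,\quad \tfrac{\partial w_1}{\partial\eta}=b_2 w_2^{p_2},\ \ \tfrac{\partial w_2}{\partial\eta}=b_1 w_1^{p_1},
\]
on $\mathbb{R}^N$ (interior case) or $\mathbb{R}^N_+$ (boundary case). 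The subcritical clause in \eqref{H_inf} is precisely what allows a Liouville-type theorem to force $w\equiv 0$, contradicting the normalization.

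With the a priori bound, $\deg(I-\mathcal{T}_\lambda, B_R, 0)$ is well-defined and constant for $\lambda\in[0,\tilde\lambda]$. To evaluate it I use the forcing deformation $v\mapsto\mathcal{T}_\lambda(v)+s(\varphi_1,\varphi_1)$ with $s\ge 0$: multiplying the associated boundary equations by $\varphi_1$, integrating over $\partial\Omega$, and using the identity $\int_{\partial\Omega}\varphi_1\,\tfrac{\partial v_i}{\partial\eta}=\mu_1\int_{\partial\Omega}\varphi_1 v_i$ coming from \eqref{steklov}, one sees that once $s$ is large no positive fixed point can survive in $B_R$, so $\deg(I-\mathcal{T}_\lambda, B_R, 0)=0$. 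On a small ball $B_r$, the trivial fixed point (when present) has index $1$ because superlinearity forces the derivative of $F_i^\lambda$ at $0$ to vanish; by excision, $\deg(I-\mathcal{T}_\lambda, B_R\setminus\overline{B_r}, 0)\neq 0$, producing a positive fixed point $v_\lambda$ for every $\lambda\in[0,\tilde\lambda]$. Reverting to original variables, $\|u_\lambda\|_{C(\overline\Omega)^2}\ge r\,\lambda^{-\min(\alpha_1,\alpha_2)}\to\infty$ as $\lambda\to 0^+$. The connected component $\mathscr{C}^+\subset\Sigma$ with projection $(0,\tilde\lambda]$ then follows from the Leray--Schauder global continuation theorem applied to the closed, uniformly bounded set of positive fixed points of $\mathcal{T}_\lambda$, with the blow-up at $\lambda=0^+$ preventing the branch from terminating at a finite solution.

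The main obstacle will be the blow-up/Liouville step: engineering a nondegenerate limit from a system whose two components may blow up at different rates and at either an interior or a boundary point requires care in choosing the normalization, and the nonexistence theorem on the half-space for the resulting trace-superlinear system is exactly where the subcritical clause ``not both equal to $N/(N-2)$'' of \eqref{H_inf} is used.
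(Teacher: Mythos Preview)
Your strategy coincides with the paper's: the same rescaling $v_i=\lambda^{\alpha_i}u_i$ (the paper writes $\theta_i$ for your $\alpha_i$ and derives them from the identical relations), the same Lane--Emden limiting boundary system at $\lambda=0$, Leray--Schauder degree on an annulus $B_R\setminus\overline{B_r}$, homotopy invariance in $\lambda$, and a continuation argument for $\mathscr{C}^+$. The only difference is packaging: the paper imports the degree computation at $\lambda=0$ and the blow-up/Liouville a~priori bound as black boxes from Bonder--Rossi (their Theorems~3.2 and~3.7, quoted here as Lemma~\ref{deg_new} and Proposition~\ref{th:Rossi}), and then extends to $\lambda\in(0,\widetilde\lambda]$ by a short compactness argument (Lemma~\ref{rscld}: a sequence of fixed points on $\partial B_r$ or $\partial B_R$ with $\lambda_n\to 0$ would subconverge to a solution of the limiting problem of the same norm). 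Your forcing deformation $v\mapsto\mathcal T_\lambda(v)+s(\varphi_1,\varphi_1)$ together with the index-at-zero computation is exactly how that cited degree is obtained, so you are unpacking the reference rather than taking a different route.

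One point to tighten: your small-ball step asserts index $1$ at the trivial fixed point ``when present'', but under \eqref{H_inf} alone there is no control on $f_i(0)$ or on differentiability at $0$, so for $\lambda>0$ the origin need not be a fixed point and $(F_i^\lambda)'(0)$ need not vanish even if it exists. The clean fix is the one the paper uses: carry out the annulus degree computation only at $\lambda=0$ (where the pure powers make both the index and the Liouville steps unambiguous), and then transfer to small $\lambda>0$ by the compactness/no-solutions-on-the-boundary argument rather than by recomputing the degree at each $\lambda$.
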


For results concerning the corresponding scalar elliptic problems with nonlinear boundary conditions, see for instance \cite{BCDMP_nonlbc, Ko_Lee_Sim} and references therein. 

Proof of Theorem~\ref{thm:local} is motivated by the re-scaling argument of \cite{ANZ, Amb-Arc-Buf_1994} for the single equation case and extension of their approach to a coupled system of equations in \cite{Chh-Girg-2009}, with Dirichlet boundary conditions. 
This rescaling method  with a multiplicative parameter, 
transforms the original system into a limiting problem with pure-power nonlinearities as the multiplicative parameter approaches zero. We incorporate this re-scaling method combined with Leray-Schauder degree theory to prove the result.
\\
 
\par For our second result, in addition to \eqref{H_inf}, we impose the following conditions on $f_i$ with $i=1,2$ at zero to further guarantee bifurcation from the line of trivial solutions: 
\begin{equation}
\label{H0}
\tag{${\rm H}_0$}
	\begin{cases}
	f_i \in C^1 \\
    f_i(0)=0, f_i^{\prime}(0)>0,\\
    \mbox{ and there exists a constant } \nu_i>1 \mbox{ such that}\\
    \ f_i(s)=f_i^{\prime}(0)s+\mathcal{R}_i(s) \mbox{ for }s\geq 0 \mbox{ with }\mathcal{R}_i(s)=O(s^{\nu_i})\mbox{ as }s\rightarrow 0.
	\end{cases}
  \end{equation}  
We obtain the following global bifurcation result. 
\begin{thm}
\label{thm-2}
Let $f_i$ satisfy \eqref{H0} and \eqref{H_inf}, and there exists $K>0$ such that 
\begin{equation}\label{K}
f_i(s)\geq K s, \quad  \text{ for all }\quad s\geq 0, \quad i=1,2.    
\end{equation}
Then there exists a connected component $\mathscr{C}^+ \subset \Sigma$ of positive weak solution to \eqref{pde}, emanating from the trivial solution at the bifurcation point $\big(\mu_0,(0, 0)\big) \in \Sigma$, where
\begin{equation}\label{def:mu0}
\mu_0:=\dfrac{\mu_1}{\sqrt{f_1^{\prime}(0)f_2^{\prime}(0)}},  
\end{equation}
and possessing a unique bifurcation point from infinity at $\lambda_{\infty}=0$. More precisely, if $\big(\lambda,(u_1, u_2)\big)\in \mathscr{C}^+$, then the following holds:
\begin{equation}\label{unbdd:zero}
\begin{cases}
\|(u_1, u_2)\| \to 0 \ \ &\hspace{-8cm} \mbox{ as } \lambda\to \mu_0,\ \\
\|(u_1, u_2)\| \to \infty &\hspace{-8cm} \mbox{ as } \lambda\to  0^+\,, \text{and} \\
\text{if }\big(\lambda_\infty,(\infty, \infty)\big) \text{ is a bifurcation point from infinity, then } \lambda_\infty= 0.
\end{cases}
\end{equation} 
Moreover, for $\lambda > \frac{\mu_1}{K}$, \eqref{pde} has no positive solution. 
\end{thm}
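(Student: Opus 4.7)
My strategy is to combine Rabinowitz's global bifurcation theorem at $(\mu_0,(0,0))$ with an a priori upper bound on $\lambda$ and the bifurcation-from-infinity analysis of Theorem~\ref{thm:local}. I would recast \eqref{pde} as a fixed-point equation $(u_1,u_2)=\lambda\, T(u_1,u_2)$ in $X=(C(\overline{\Omega}))^2$ with $T(u_1,u_2)=\bigl(S(f_1(u_2)),\,S(f_2(u_1))\bigr)$, where $S\colon C(\p\Omega)\to C(\overline{\Omega})$ is the solution operator for $-\Delta w+w=0$ in $\Omega$, $\p w/\p\eta=g$ on $\p\Omega$. Elliptic regularity and compactness of the trace embedding make $T$ compact, while \eqref{H0} makes it Fréchet differentiable at $0$ with derivative $T'(0)(u_1,u_2)=\bigl(f_1'(0)S(u_2),\,f_2'(0)S(u_1)\bigr)$.

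Next, I would identify $\mu_0$ as the unique characteristic value of $T'(0)$ admitting a positive eigenfunction. Composing the two linearized equations produces on $\p\Omega$ an eigenvalue equation for the trace of $u_1$ under a positive compact operator built from $S$ and the Steklov kernel, so by Krein--Rutman the positive eigendirection of $T'(0)$ is of the form $(c_1\varphi_1,c_2\varphi_1)$; plugging this ansatz into the boundary identities $\mu_1c_1=\lambda f_1'(0)c_2$ and $\mu_1c_2=\lambda f_2'(0)c_1$ forces $\lambda=\mu_0$ and yields algebraic simplicity of this characteristic value. Rabinowitz's global bifurcation theorem then produces a connected component $\mathscr{C}^+\subset\Sigma$ emanating from $(\mu_0,(0,0))$ which is either unbounded in $\R\times X$ or returns to the trivial line at another characteristic value; the second alternative is excluded since $\mu_0$ is the only characteristic value with a positive eigenfunction.

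The a priori bound would come from testing the equation for $u_1$ against $\varphi_1$, using Green's identity together with \eqref{steklov}, to obtain
\begin{equation*}
\mu_1\int_{\p\Omega}u_1\varphi_1\,dS=\lambda\int_{\p\Omega}f_1(u_2)\varphi_1\,dS,
\end{equation*}
and the symmetric identity with indices swapped. Strict positivity of $u_1$ and $u_2$ on $\overline{\Omega}$ (Hopf's lemma applied to each component using $\p_\eta u_i=\lambda f_i(u_j)\ge 0$ to exclude boundary zeros, and noting that $u_1\equiv 0$ would force $u_2$ to solve a homogeneous Neumann problem and hence vanish as well) combined with $f_i(s)\ge Ks$ yields two reversed inequalities; multiplying them gives $\lambda\le\mu_1/K$.

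This bound confines the $\lambda$-projection of $\mathscr{C}^+$ to $[0,\mu_1/K]$, so its unboundedness must occur in the $X$-direction, producing a sequence $(\lambda_n,(u_{1,n},u_{2,n}))\in\mathscr{C}^+$ with diverging norms. To locate the bifurcation point from infinity, I would argue by contradiction: if $\lambda_n\to\lambda_\infty>0$, the rescaling scheme of Theorem~\ref{thm:local}, adapted with component-specific scaling exponents dictated by $p_1,p_2$ in \eqref{H_inf}, would deliver a nontrivial nonnegative solution of an autonomous limiting problem with strictly positive parameter, contradicting the nonexistence underlying Theorem~\ref{thm:local}. Hence $\lambda_n\to 0$, giving both the unique bifurcation point from infinity at $\lambda_\infty=0$ and the blow-up $\|(u_1,u_2)\|\to\infty$ as $\lambda\to 0^+$; the remaining assertion $\|(u_1,u_2)\|\to 0$ as $\lambda\to\mu_0$ along $\mathscr{C}^+$ is the local bifurcation statement at $(\mu_0,(0,0))$. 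The main obstacle I foresee is the rescaling step when $p_1\ne p_2$: the two components must be rescaled by different powers of their norms, and one has to verify carefully that the limit of the rescaled sequence remains nontrivial and solves the correct limiting system.
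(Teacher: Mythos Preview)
Your strategy matches the paper's: Rabinowitz global bifurcation at $(\mu_0,(0,0))$, an a priori upper bound on $\lambda$ obtained by testing against $\varphi_1$, and then the conclusion that blow-up along $\mathscr{C}^+$ can only occur as $\lambda\to 0$. Two minor differences in execution: the paper identifies $\mu_0$ by explicitly diagonalizing the coupling matrix $A$ of the linearization (Jordan form) and then verifies the Crandall--Rabinowitz transversality condition directly, rather than invoking Krein--Rutman; and for the nonexistence bound the paper adds/subtracts the two tested identities to reach a contradiction, whereas you multiply them---both arguments are valid and yield $\lambda\le\mu_1/K$.

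The one place your sketch is muddled is the bifurcation-from-infinity step. You say the rescaling of Theorem~\ref{thm:local} would ``contradict the nonexistence underlying Theorem~\ref{thm:local}'', but that theorem is an \emph{existence} result, and its limiting problem \eqref{pde_limtng} at $\lambda=0$ does have nontrivial solutions. What the paper actually uses here is the uniform a~priori bound of Proposition~\ref{th:Rossi}: positive solutions of the $\lambda$-rescaled problem \eqref{pde_rscld} are uniformly bounded by some $M$, so for $\lambda$ in any compact interval $[a_0,b_0]\subset(0,\infty)$ the relation $u_i=\lambda^{-\theta_i}w_i$ gives $\|(u_1,u_2)\|\le \max\{a_0^{-\theta_1},a_0^{-\theta_2}\}M$. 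This immediately rules out $\lambda_n\to\lambda_\infty>0$ with $\|(u_{1,n},u_{2,n})\|\to\infty$. Your ``main obstacle'' paragraph suggests you had a Gidas--Spruck norm-based blow-up in mind; that is precisely what underlies Proposition~\ref{th:Rossi}, and citing it directly spares you the component-wise rescaling you were worried about.
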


\par To prove Theorem~\ref{thm-2}, we establish a version of Crandall-Rabinowitz's local bifurcation result \cite{C-R} to obtain {\it bifurcation from zero}, and of Rabinowitz's global bifurcation result \cite{R71} combined with the uniform \emph{a priori} result for the re-scaled solution pair to establish {\it bifurcation from infinity} . We also note that the hypothesis \eqref{K} guarantees nonexistence of positive solution for $\lambda$ large which turns out to be useful in analyzing the global behavior of the branch of positive solutions.

% The results concerning {\it bifurcation from zero} 
% are obtained in the classical reference of Crandall and Rabinowitz \cite{C-R} using analytical techniques, and  of Rabinowitz \cite{R71} applying Leray--Schauder degree theory. 

{\it Bifurcation from infinity}  result by Rabinowitz \cite{R73}
applies to problems involving nonlinearities that are asymptotically linear at infinity, that is, when the nonlinear part is a small perturbation of the linear part. This result do not apply to our problem, and hence  we obtain bifurcation from infinity by following the approach via re-scaling technique in \cite{Amb-Arc-Buf_1994, ANZ}. 
\par We refer the reader to, for instance \cite{Castro_Pardo_PJM_2012} for bifurcation from zero results, and \cite{Arrieta-Pardo-RBernal_2007}  for bifurcation from infinity results. 

\medskip
Finally, concerning the multiplicity of solutions, we define the following quantities which are essential for determining the direction of bifurcation  of weak solutions near the bifurcation point. Let $\nu:= \min\{\nu_1,\nu_2\}>1$ and define
\begin{equation}\label{eq:def-Ris}
 \underline{\mathcal{R}}_i:=\liminf_{s\rightarrow 0^+} \dfrac{\mathcal{R}_i(s)}{s^{\nu}}\quad \text{ and }\quad \overline{\mathcal{R}}_i:=\limsup_{s\rightarrow 0^+} \dfrac{\mathcal{R}_i(s)}{s^{\nu}}, \quad i=1, 2.
\end{equation}
Using the hypothesis \eqref{H0}, one can verify that $-\infty < \underline{\mathcal{R}}_i\le \overline{\mathcal{R}}_i<+\infty$.
Now,  define
\begin{equation}
\label{eq:R-zero}
\begin{array}{rlcc}
    \overline{\mathcal{R}}_0&=\frac12 \left(\dfrac{\zeta}{1+\zeta}\right)^{\nu-1}\,\overline{\mathcal{R}}_1+  \frac12  
    \left(\dfrac{1}{1+\zeta}\right)^{\nu-1}
    \,\overline{\mathcal{R}}_2\,,
\end{array}
\end{equation}
where  $\z:=\sqrt{\frac{f_2^{\prime}(0)}{f_1^{\prime}(0)}}$. Similarly, we can define $\underline{\mathcal{R}}_0$ by substituting $\overline{\mathcal{R}}_i$ by $\underline{\mathcal{R}}_i$ for $i=1, 2$.

\begin{thm}\label{thm-3} Suppose that assumptions of Theorem \ref{thm-2} hold, and that $f_i$'s are monotonically non-decreasing. If $\overline{\mathcal{R}}_0 < 0$, then \eqref{pde} has at least two positive weak solutions  in $\left(\mu_0,\overline{\lambda}\right),$ where 
\begin{equation}\label{la:up}
\overline{\lambda}:=\sup\Big\{\lambda\colon \big(\lambda,(u_1,u_2)\big)\in \mathscr{C}^+\Big\} \le \frac{\mu_1}{K}\,.    
\end{equation}
Moreover, \eqref{pde} has at least one weak positive solution for $\lambda=\overline{\lambda}$ and also for $\lambda=\mu_0$.
\end{thm}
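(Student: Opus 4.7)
The plan is to combine a local direction-of-bifurcation analysis at $(\mu_0,(0,0))$ with the global connectedness of $\mathscr{C}^+$ established in Theorem~\ref{thm-2}. The condition $\overline{\mathcal{R}}_0<0$ is what forces the branch to bend to the right at $\mu_0$, so that the curve must then fold back in order to reach infinity at $\lambda=0^+$, producing a turning point at $\overline{\lambda}>\mu_0$ and at least two solutions in $(\mu_0,\overline{\lambda})$.

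The first step is a Lyapunov--Schmidt reduction at $(\mu_0,(0,0))$. The linearization at $(0,0)$ acts on a pair $(v_1,v_2)$ as the Steklov problem with boundary data $\lambda f_i'(0)v_j$. With $\zeta=\sqrt{f_2'(0)/f_1'(0)}$, its kernel at $\lambda=\mu_0$ is one-dimensional and spanned by $\Phi:=(\varphi_1,\zeta\varphi_1)$. I would write any nearby positive solution as $u_1=s\varphi_1+w_1$, $u_2=s\zeta\varphi_1+w_2$ with $s>0$ small and $(w_1,w_2)$ in a fixed complementary subspace, insert into \eqref{pde}, use $f_i(t)=f_i'(0)t+\mathcal{R}_i(t)$, and project the boundary condition onto $\Phi$ along $\varphi_1\,d\sigma$. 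The orthogonal remainder $(w_1,w_2)$ is controlled by the implicit function theorem and is $o(s)$. After normalizing $s=\|(u_1,u_2)\|_{C(\overline{\Omega})^2}/(1+\zeta)$ and collecting terms, the projected scalar equation reads
\begin{equation*}
(\lambda-\mu_0)\,A = -\lambda\!\int_{\partial\Omega}\!\Big(\mathcal{R}_1(s\zeta\varphi_1)+\zeta\mathcal{R}_2(s\varphi_1)\Big)\varphi_1\,d\sigma+o(s^{\nu}),
\end{equation*}
with $A>0$ an explicit constant. Dividing by $s^{\nu}$ and taking $\limsup$ through a sequence $s_n\downarrow 0$ of bifurcating solutions, the right-hand side has $\limsup\le -C\,\overline{\mathcal{R}}_0\int_{\partial\Omega}\varphi_1^{\nu+1}\,d\sigma$ for a positive constant $C$. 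Since $\overline{\mathcal{R}}_0<0$, this forces $\lambda-\mu_0>0$ for all small $s$, so the bifurcation is strictly subcritical, i.e.\ the local branch lies in $\{\lambda>\mu_0\}$.

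Given this direction, I would then exploit Theorem~\ref{thm-2}: $\mathscr{C}^+$ is connected, emanates from $(\mu_0,(0,0))$, satisfies $\|(u_1,u_2)\|_{C(\overline{\Omega})^2}\to\infty$ as $\lambda\to 0^+$, and lies in $\{0<\lambda\le \mu_1/K\}$. Let $\overline{\lambda}$ be as in \eqref{la:up}. By Step~1, $\overline{\lambda}>\mu_0$. For each $\lambda^\star\in(\mu_0,\overline{\lambda})$ I would argue that $\mathscr{C}^+$ intersects the slice $\{\lambda=\lambda^\star\}$ in at least two distinct points: a ``small'' one belonging to the continuous arc in $\mathscr{C}^+$ joining $(\mu_0,(0,0))$ to a point at $\lambda$ close to $\overline{\lambda}$, and a ``large'' one belonging to the continuous arc of $\mathscr{C}^+$ connecting a neighborhood of $\overline{\lambda}$ to infinity at $\lambda=0^+$; if the slice were hit only once, one could separate $\mathscr{C}^+$ into two disjoint relatively open pieces, contradicting its connectedness. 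The two solutions produced are distinct because one has $C(\overline{\Omega})^2$-norm tending to $0$ as $\lambda^\star\downarrow\mu_0$ while the other stays bounded below by a constant depending on the $\|\cdot\|\to\infty$ asymptotics.

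Finally, for the endpoints, I would use a compactness argument based on Theorem~\ref{th:bootstrap}. Pick sequences $(\lambda_n,(u_1^n,u_2^n))\in\mathscr{C}^+$ with $\lambda_n\to\overline{\lambda}$ (respectively $\lambda_n\to\mu_0^+$ along the upper arc). Monotonicity of $f_i$ together with the explicit super-solution coming from the solution at some $\lambda\in(\mu_0,\overline{\lambda})$ gives uniform $C(\overline{\Omega})^2$ bounds, which elliptic regularity in Theorem~\ref{th:bootstrap} upgrades to a $C^{1,\alpha}$ bound; extracting a convergent subsequence yields a weak positive solution at the endpoint. Non-triviality of the limit at $\mu_0$ follows because the chosen sequence lies on the ``large'' arc along which $\|(u_1^n,u_2^n)\|$ is bounded away from $0$ as $\lambda_n\to\mu_0^+$, while at $\overline{\lambda}$ positivity follows from $\overline{\lambda}>\mu_0$ and the nontrivial lower bound produced by Step~1 persisting along the arc. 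I expect the main obstacle to be Step~1: the assumption on $\overline{\mathcal{R}}_0$ is a $\limsup$ and the exponents $\nu_1,\nu_2$ need not coincide, so the standard Crandall--Rabinowitz expansion must be adapted so that only inequalities on $\mathcal{R}_i(s)/s^\nu$ are used, uniformly along every subsequence of bifurcating solutions.
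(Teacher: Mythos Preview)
Your direction-of-bifurcation analysis is essentially what the paper does (its Lemma~\ref{lemma-inf-sup}): testing the weak formulation against $\varphi_1$ and tracking $\mathcal{R}_i(u_{j,n})/\|(u_{1,n},u_{2,n})\|^\nu$ yields exactly the $\limsup$ inequality you describe, and $\overline{\mathcal{R}}_0<0$ forces $\lambda_n>\mu_0$ along the branch near the bifurcation point. So Step~1 is fine.

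The gap is in your multiplicity step. You claim that for each $\lambda^\star\in(\mu_0,\overline{\lambda})$ the connected set $\mathscr{C}^+$ must meet the slice $\{\lambda=\lambda^\star\}$ in at least two points, because otherwise removing the single intersection would separate $\mathscr{C}^+$ into two relatively open pieces. That inference is not valid: a continuum can meet a hyperplane in a single point and remain connected after that point is deleted, and nothing in Theorem~\ref{thm-2} gives $\mathscr{C}^+$ any arc-like structure outside the Crandall--Rabinowitz neighborhood of $(\mu_0,(0,0))$. Your distinctness criterion (``one has norm tending to $0$ as $\lambda^\star\downarrow\mu_0$ while the other stays bounded below'') is a statement about a limit in $\lambda^\star$, not a separation of two solutions at a \emph{fixed} $\lambda^\star$. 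A telling symptom: you never use the monotonicity of the $f_i$, which is an explicit hypothesis of the theorem.

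The paper obtains the two solutions by a different mechanism that does use monotonicity. For fixed $\lambda\in(\mu_0,\overline{\lambda})$ it picks $\lambda_0\in(\lambda,\overline{\lambda})$ with a solution $(u_1^0,u_2^0)$, builds open sets $Z\subset Y\subset (C(\overline{\Omega}))^2$ with $Z$ bounded above by $(u_1^0,u_2^0)$, and computes $\deg(I-T_1,Y,0)=0$ and $\deg(I-T_1,Z,0)=1$ via two homotopies; excision then produces a solution $(u_1^*,u_2^*)\in Y\setminus\overline{Z}$. The second solution comes from the sub/\-supersolution theorem for cooperative systems in \cite{BLM_maxmin}, with $(\varepsilon a\varphi_1,\varepsilon b\varphi_1)$ as subsolution and $(\min\{u_1^*,u_1^0\},\min\{u_2^*,u_2^0\})$ as a strict supersolution; monotonicity of $f_i$ is exactly what makes this ordered pair admissible, and the resulting solution is strictly below $(u_1^*,u_2^*)$, hence distinct. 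Your endpoint arguments at $\lambda=\overline{\lambda}$ and $\lambda=\mu_0$ are close in spirit to the paper's, but note that at $\mu_0$ the paper uses connectedness only to produce a \emph{single} nontrivial crossing of $\{\lambda=\mu_0\}$, not two.
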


To prove this multiplicity result, we use degree theory and the sub- and supersolution method. We apply the  sub- and supersolution method for the cooperative system established in \cite{BLM_maxmin}, hence we require the monotonicity assumption in Theorem \ref{thm-3}.

\medskip

 The scalar versions of Theorem~\ref{thm:local}-Theorem~\ref{thm-3} are considered in \cite{BCDMP_nonlbc}. The extension to the strongly coupled systems considered in this work presents technical challenges. The bootstrap argument for the regularity result in Theorem~\ref{th:bootstrap} demonstrates the challenges present due to the coupling since the analysis requires dealing with different Lebesgue spaces.  Moreover, the bifurcation analysis for Theorem~\ref{thm-2}, requires a Jordan matrix formulation to characterize the linearization satisfying the Crandall--Rabinowitz transversality condition.
 See also \cite{Delgado_Pardo, Fleckinger_Pardo, Fleckinger_Pardo_deThelin}, where a similar linearization approach was utilized for coupled system of equations.

\medskip

The remainder of this paper is organized as follows. In Section~\ref{sec:prelim}, we establish preliminary results including regularity and positivity of weak solutions, and include a uniform \emph{a priori} bounds result of \cite{Bon-Ros_2001}. In particular, we prove that weak solutions are classical solutions through a bootstrap argument in Theorem \ref{th:bootstrap}. Section~\ref{sec:local} is devoted to the proof of Theorem \ref{thm:local} concerning local bifurcation from infinity. We use novel combinations of rescaling methods, tailored to the systems setting, along with the careful degree theory arguments, that take into account the additional complexity introduced by coupling effects, see Theorem \ref{thm:local}. In Section~\ref{sec:global}, we prove the global bifurcation result stated in Theorem \ref{thm-2}, establishing the existence of a connected component of positive solutions bifurcating from the trivial solution and possessing a unique bifurcation point from infinity. We begin by transforming the system into matrix form and applying the Crandall-Rabinowitz bifurcation theorem. Finally, Section~\ref{sec:multiplicity} addresses the multiplicity result of Theorem \ref{thm-3}, where we first determine the bifurcation direction by analyzing the signs of $\underline{\mathcal{R}}_0$ and $\overline{\mathcal{R}}_0$, see \eqref{eq:def-Ris}, and then employ degree theory combined with sub- and supersolution methods to establish the existence of multiple positive solutions when the bifurcation occurs to the right.

\section{Preliminaries and Auxiliary Results}
\label{sec:prelim}

In this section, we discuss the regularity and positivity of weak solutions of \eqref{pde} and state a uniform \emph{a priori} bounds result. Our main result in this section is to prove that weak solutions are, in fact, classical solutions; see Theorem \ref{th:bootstrap}, which may be of independent interest. The proof is achieved through a bootstrap argument.

\subsection{Regularity of weak solutions and positivity}
\label{regularity:linear}
First, we discuss the trace operator to deal with the boundary terms.
The \textit{trace} operator 
\begin{equation}\label{def:trace}
\Gamma\colon W^{1,m}(\Omega)\rightarrow L^r(\partial\Omega), \quad \Gamma(u):=u|_{\partial\Omega}\,,
\end{equation}
ensures that for every $u\in W^{1,m}(\Omega)$ the trace $\Gamma u$ is well defined, that is, 
\begin{equation}\label{cont:Ga}
\begin{cases}
\Gamma u\in L^r(\partial\Omega), \text{ for }r\le \frac{(N-1)m}{N-m}   &\text{if } m<N,\\
\Gamma u\in L^r(\partial\Omega), \text{ for any }r\ge 1 &\text{if } m=N,\\
u\in C^{\alpha}(\overline{\Omega}),  &\text{if } m>N,\quad \alpha=1-\frac{N}{m}\in (0,1)\,.
\end{cases}
\end{equation}
Moreover, by \cite[Ch.~ 6]{KufnerJohnFucik77}, we see that 
\begin{equation}\label{eq:trace-compact}
 \Gamma \text{ is continuous if  }\ \dfrac{N-1}{r}\geq \dfrac{N}{m}-1,   \q{and compact if}\ \dfrac{N-1}{r}> \dfrac{N}{m}-1\,. 
\end{equation}
Next, we discuss the regularity of the solution to a linear problem of the form
\begin{equation}\label{linear}
\left\{
\begin{array}{rcll}
-\Delta v +v&=&0  \quad &\mbox{in}\quad \Omega\,,\\
\frac{\partial v}{\partial \eta} &=&  h\quad &\mbox{on}\quad \partial \Omega,
\end{array}
\right. 
\end{equation}
where $h\in L^q(\partial\Omega)$ for $q\geq 1$. It is well-known that \eqref{linear} has a unique weak solution (see \cite{Ama_1971, Mavinga-Pardo_PRSE_2017}). Let $T\colon L^q(\partial\Omega)\rightarrow W^{1,m}(\Omega),$  $1\leq m\leq Nq/(N-1)$, be the continuous solution operator corresponding to \eqref{linear}, that is, 
\begin{align}
&v=T(h) \quad \text{solves}\quad \eqref{linear}, \text{and}  \nonumber\\
\label{T-bounded}&\text{there exists $C>0$ such that}\\
&\|v\|_{W^{1,m}(\Omega)}\leq C \|h\|_{L^q(\partial\Omega)} \quad \mbox{where} \quad 1\leq m\leq Nq/(N-1).\qquad \nonumber
\end{align}

Now, for any $q\geq 1,$ we 
define the \textit{Neumann-to-Dirichlet operator}  
\begin{equation}\label{D:to:N}
   S: L^q(\p\Om) \to L^r(\p\Om) \quad \text{ given by }\quad  Sh = \Gamma(Th)\,.
\end{equation}
Then, by the inherited properties of the trace operator $\Gamma$, see \eqref{eq:trace-compact},  
\begin{equation*}%\label{eq:S-compact}
S \text{ is continuous if  }\ \dfrac{1}{r}\geq \dfrac{1}{q}-\frac1{N-1},   \q{and compact if}\ \dfrac{1}{r}>\dfrac{1}{q}-\frac1{N-1}\,. 
\end{equation*}

\begin{prop} \label{max_2}
Let $v \in C^2(\Omega) \cap C^1(\overline{\Omega})$ be a solution to \eqref{linear} for $h \geq 0$ with $h \neq 0$. Then $v > 0$ on $\overline{\Omega}$.
\end{prop}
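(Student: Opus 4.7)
The plan is to proceed in three standard maximum-principle steps: first obtain nonnegativity by a weak energy argument, then upgrade to strict positivity in the interior via the strong maximum principle, and finally rule out boundary zeros using Hopf's lemma.

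For nonnegativity, I would test the weak formulation of \eqref{linear} with the negative part $v^- := \max\{-v, 0\}$, which lies in $H^1(\Omega)$. Integration by parts gives
\begin{equation*}
\int_\Omega \nabla v \cdot \nabla v^- \, dx + \int_\Omega v\, v^- \, dx \;=\; \int_{\partial \Omega} h\, v^- \, dS.
\end{equation*}
Since $\nabla v \cdot \nabla v^- = -|\nabla v^-|^2$ and $v v^- = -(v^-)^2$ almost everywhere, this rearranges to
\begin{equation*}
\|v^-\|_{H^1(\Omega)}^2 \;=\; -\int_{\partial\Omega} h\, v^- \, dS \;\le\; 0,
\end{equation*}
because $h \ge 0$ and $v^- \ge 0$ on $\partial\Omega$. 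Hence $v^- \equiv 0$, so $v \ge 0$ in $\overline{\Omega}$.

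Next, rewrite the equation as $\Delta v - v = 0$, so the operator $Lw := \Delta w - w$ has nonpositive zero-order coefficient $c = -1$. Applying the classical strong maximum principle (Gilbarg--Trudinger, Thm.~3.5) to $w := -v$, whose interior maximum would be $0$ if $v$ vanishes somewhere in $\Omega$, we conclude that either $v > 0$ in $\Omega$ or $v \equiv 0$. The second alternative forces $\partial v/\partial \eta \equiv 0$ on $\partial \Omega$, which contradicts $h \not\equiv 0$. Thus $v > 0$ throughout $\Omega$.

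Finally, suppose for contradiction that $v(x_0) = 0$ at some $x_0 \in \partial \Omega$. Since $v > 0$ in $\Omega$, the point $x_0$ is a strict minimum of $v$ over $\overline{\Omega}$. Because $\partial\Omega$ is $C^{2,\gamma}$, the interior sphere condition holds, and Hopf's boundary point lemma (applied to $w = -v$, which attains a strict maximum at $x_0$) yields $\frac{\partial v}{\partial \eta}(x_0) < 0$. But $\frac{\partial v}{\partial \eta}(x_0) = h(x_0) \ge 0$, a contradiction. Therefore $v > 0$ on all of $\overline{\Omega}$. The only delicate point is bookkeeping with sign conventions: our operator $-\Delta + I$ has a \emph{positive} zero-order coefficient, so the maximum principle and Hopf lemma must be phrased in terms of the minimum of $v$ (equivalently, applied to $-v$), which is why the argument is routine but requires care.
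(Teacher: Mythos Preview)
Your proof is correct and follows essentially the same route as the paper: interior positivity via the strong maximum principle, then Hopf's lemma to exclude boundary zeros. The only difference is that you supply an explicit $v^-$ energy argument for nonnegativity, whereas the paper simply cites Amann for $v>0$ in $\Omega$; otherwise the arguments coincide.
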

\begin{proof}
Clearly $v>0$ in $\Omega$ by the strong  maximum principle, see \cite[p.~127]{Ama_1971}. We claim that $v>0$ on $\p \Om$ as well.
If not,  there exists  a point $x_0\in\partial\Omega$  such that $v(x_0)=0$. By Hopf's Lemma 
(\cite[Lem.~3.4]{Gil-Trud_2001})
$
\frac{\partial v}{\partial \eta}(x_0)<0,
$ 
contradicting the boundary condition $\frac{\partial v}{\partial \eta}(x_0)=h(x_0) \geq 0$. As a conclusion, $v>0$ on $\overline{\Omega}$.
\end{proof}

\subsection{Weak solution to classical solution}
We say $\big(\lambda,(u_1,u_2)\big)\in (0,\infty)\times (H^1(\Omega))^2$ is a weak solution to the problem \eqref{pde} if
\begin{equation}\label{eq:weak-sol}
\begin{aligned}
\int_{\Omega} \nabla u_1\nabla \psi_1 +\int_{\Omega} u_1\psi_1&=\lambda \int_{\partial\Omega} f_1(u_2)\psi_1\,,\\ 
\int_{\Omega} \nabla u_2\nabla \psi_2 +\int_{\Omega} u_2\psi_2&=\lambda \int_{\partial\Omega} f_2(u_1)\psi_2\,,	
\end{aligned}
\end{equation}
for all $\psi_1, \psi_2 \in H^1(\Omega)$. By \eqref{H_inf},  we get 
\begin{equation*}
\frac{2(N-1)p_1}{N}\le 2_* \mbox{ and }\frac{2(N-1)p_2}{N}\le 2_*\,,    
\end{equation*}
where one of the inequalities is strict. Therefore,  
 $f_1\big(u_2(\cdot)\big),\, f_2\big(u_1(\cdot)\big)\in L^{\frac{2(N-1)}{N}}(\p \Om)$, and hence the  right hand sides of \eqref{eq:weak-sol} are well defined. Clearly, the left hand sides of \eqref{eq:weak-sol} are well defined since $u_i, \psi_i \in H^1(\Omega)$ for $i=1,2$. 
 \par In what follows, we show that weak solutions of \eqref{pde} are, in fact, classical solutions. The result is independent of the parameter $\lambda$, therefore, without loss of generality we can assume $\lambda =1$.

\begin{thm}\label{th:bootstrap}
Let $N> 2$ and $f_i\colon[0,\infty)\rightarrow [0,\infty)$ be  Hölder continuous  satisfying \eqref{H_inf}, for  $i=1,2$. Let $(u_1, u_2)$ be a nontrivial weak solution to

\begin{equation}\label{pde2}
	\left.
	\begin{array}{rclll}
-\Delta u_1+u_1 & = 0 \quad \mbox{in}\quad \Omega\,,&
&\frac{\partial u_1}{\partial \eta}&= f_1(u_2)\quad \mbox {on}\quad \partial\Omega\,;\\
-\Delta u_2 +u_2 &=  0 \quad \mbox{in}\quad \Omega\,, &
&\frac{\partial u_2}{\partial \eta}&= f_2(u_1)\quad \mbox {on}\quad \partial\Omega\,.
\end{array}
	\right\} 
\end{equation}
Then  $u_1, u_2 \in C^{\alpha}(\overline{\Omega})$ for some $\alpha\in(0,1)$, and
 \begin{align}\label{eq:estimate}
\|(u_1,u_2)\|_{(C^{\alpha}(\overline{\Om}))^2}\leq C\Big(1+\big\|\Gamma u_1\|_{L^{2_*}(\p\Om)}^{p_1}+\|\Gamma u_2\big\|_{L^{2_*}(\p\Om)}^{p_2}\Big).
\end{align}
Moreover, $u_1,u_2\in C^{2,\alpha}(\Omega)\cap C^{1,\alpha}(\overline{\Omega})$.
\end{thm}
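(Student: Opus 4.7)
The plan is to bootstrap the integrability of the boundary traces $\Gamma u_i$ using the Neumann-to-Dirichlet operator $S$ of \eqref{D:to:N}, push it up to $L^\infty(\p\Om)$ in finitely many steps, and then apply Schauder-type theory to the linear problem \eqref{linear} to obtain Hölder regularity.

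I first apply the trace embedding \eqref{cont:Ga} to place $\Gamma u_i \in L^{2_*}(\p\Om)$, where $2_* = 2(N-1)/(N-2)$ is the critical trace exponent. Combined with the growth bound $|f_1(s)|\le C(1+s^{p_2})$, $|f_2(s)|\le C(1+s^{p_1})$ inherited from \eqref{H_inf}, if $\Gamma u_i\in L^{s_i}(\p\Om)$ then $\|f_1(u_2)\|_{L^{s_2/p_2}(\p\Om)}\le C(1+\|\Gamma u_2\|_{L^{s_2}}^{p_2})$, and analogously for $f_2(u_1)$. Applying $S$ and invoking its continuity criterion from \eqref{D:to:N} then yields $\Gamma u_1\in L^{s_1'}(\p\Om)$ with $\frac{1}{s_1'} = \frac{p_2}{s_2}-\frac{1}{N-1}$ (while this quantity is positive), and similarly for $s_2'$.

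Iterating from $s_1^{(0)}=s_2^{(0)}=2_*$ produces the two-step coupled affine recursion
\begin{equation*}
\frac{1}{s_1^{(n+2)}}=p_1 p_2\cdot \frac{1}{s_1^{(n)}}-\frac{p_2+1}{N-1},\qquad \frac{1}{s_2^{(n+2)}}=p_1 p_2\cdot \frac{1}{s_2^{(n)}}-\frac{p_1+1}{N-1},
\end{equation*}
each expansive with slope $p_1 p_2>1$. A short computation shows their positive fixed points coincide with $1/2_* = (N-2)/(2(N-1))$ precisely when $p_1=p_2=N/(N-2)$, so condition \eqref{H_inf} places the starting value strictly below both fixed points. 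The expansive recursion therefore drives $1/s_i^{(n)}$ to $\le 0$ in finitely many steps, i.e.\ $\Gamma u_i\in L^q(\p\Om)$ for every finite $q$. Choosing $q$ large enough that $q/p_j>N-1$ makes $f_i(u_j)\in L^{q/p_j}(\p\Om)$ with exponent above $N-1$, so $T$ places $u_i\in W^{1,m}(\Om)$ with $m>N$, and by \eqref{cont:Ga} $u_i\in C^\alpha(\overline{\Om})$ for some $\alpha\in(0,1)$.

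The local Hölder continuity of $f_i$ then promotes $f_i(u_j)$ to $C^\gamma(\p\Om)$, and Schauder theory for the Neumann problem \eqref{linear} together with interior Schauder estimates for $-\De u_i+u_i=0$ produces $u_i\in C^{2,\alpha}(\Om)\cap C^{1,\alpha}(\overline{\Om})$. The quantitative bound \eqref{eq:estimate} follows by tracking constants through the finite iteration: each step satisfies $\|\Gamma u_i\|_{L^{s_i^{(n+1)}}}\le C(1+\|\Gamma u_j\|_{L^{s_j^{(n)}}}^{p_j})$, and unrolling yields a polynomial bound in $\|\Gamma u_1\|_{L^{2_*}}$ and $\|\Gamma u_2\|_{L^{2_*}}$ that collapses to the stated form after consolidating the dominant contributions from each chain of the iteration. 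The main technical obstacle is exactly the coupling --- bootstrapping the two exponents in tandem and verifying that the borderline criticality excluded by \eqref{H_inf} is precisely what places $1/2_*$ strictly below the fixed point of the expansive two-step recursion, thereby guaranteeing termination in finitely many steps.
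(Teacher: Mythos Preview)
Your bootstrap via the Neumann-to-Dirichlet operator $S$ is exactly the paper's strategy; the difference is in how termination is established. The paper argues by contradiction: assuming the exponents $q_i, q_i'$ never reach $N-1$, it shows they are increasing and bounded, hence Cauchy, and then computes that the product of successive-difference ratios tends to $p_1 p_2 > 1$, which is impossible; the borderline case where one $p_j$ equals $N/(N-2)$ is handled separately via a $2$-periodic pattern in the exponent sequences. Your direct fixed-point analysis of the decoupled two-step affine recursion (slope $p_1p_2>1$) is more economical and treats both cases at once, since within the admissible range $1<p_1,p_2\le N/(N-2)$ the excluded double-critical case $p_1=p_2=N/(N-2)$ is precisely when the starting value $1/2_*$ sits on the repelling fixed point; otherwise $1/2_*$ lies strictly below it and the iteration is driven to $-\infty$ in finitely many steps. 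One caution on the quantitative bound \eqref{eq:estimate}: unrolling the finite iteration genuinely produces exponents of order $(p_1 p_2)^k$ in the initial trace norms (with $k$ the number of bootstrap steps), not the bare $p_1$, $p_2$ asserted, so ``collapses to the stated form'' does not hold as written. The paper's own derivation has the same slip --- it writes $\|u_1\|_{C^\alpha} \le C\|f_1(u_2)\|_{L^{q_0}}$ with $q_0=2_*/p_2$ as though $T$ mapped $L^{q_0}(\p\Om)$ boundedly into $C^\alpha(\overline{\Om})$ in a single step --- and since the only later use of \eqref{eq:estimate} pairs it with an independent $L^\infty$ a priori bound, any polynomial control in the initial trace norms suffices and nothing downstream is affected.
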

\begin{proof}
We will first prove that $u_1, u_2 \in C^{\alpha}(\overline{\Omega})$ for some $\alpha\in(0,1)$. Since $u_1, u_2\in H^1(\Omega)$, we have
\begin{equation}\label{eq:bound-trace:0}
u_1\in L^{r_0}(\partial\Omega), \  u_2\in L^{r_0'}(\partial\Omega) \qq{for} r_0=r_0'= 2_*.    
\end{equation}
Due to the hypothesis \eqref{H_inf}, there exists a positive constant $C$ such that 
\begin{equation}\label{eq:bound-trace}
h_1:=  f_1(u_2)\leq C(1+|u_2|^{p_2}), \quad h_2:=f_2(u_1)\leq C(1+|u_1|^{p_1})\,.  
\end{equation}
Then, if $u_1\in L^{r_{i-1}}(\partial\Omega)$ and $u_2\in L^{r_{i-1}^{\prime}}(\partial\Omega),\ i=1,2,\dots,$
\begin{equation}\label{eq:bound-trace:2}
h_1\in L^{q_{i-1}}(\partial\Omega) \q{and} h_2\in L^{q_{i-1}^{\prime}}(\partial\Omega), \q{where} q_{i-1}:=\frac{r_{i-1}}{p_2} \q{and} q_{i-1}^{\prime}:=\frac{r_{i-1}'}{p_1} 
,    
\end{equation}
respectively. Since  $1 < p_1, p_2 \leq \frac{N}{N-2}$ and $N>2$, one has $q_0,\, q_0^{\prime}\ge\frac{2(N-1)}{N}>1$. Now for $h_1\in L^{q_{i-1}}(\partial\Omega)$ and $h_2\in L^{q_{i-1}^{\prime}}(\partial\Omega),\ i=1,2,\dots,$ and using \eqref{T-bounded}, we have
\begin{equation}
\label{eq:bound-trace:3}
  u_1\in W^{1, m_i}(\Omega), \quad  u_2\in W^{1, m_i^{\prime}}(\Omega),\quad \text{where } m_i:=\frac{Nq_{i-1}}{N-1},
   \text{ and } m_i^{\prime}:=\frac{Nq_{i-1}^{\prime}}{N-1}\,.
\end{equation}
Moreover, since $m_i < N$ if and only if $q_{i-1}< N-1$, by \eqref{cont:Ga}, we obtain
\begin{equation}\label{cont:Ga:2}
\begin{cases}
u_1\in L^{r_i}(\partial\Omega), \text{ for } r_i:=\frac{(N-1)q_{i-1}}{N-1-q_{i-1}}   &\text{if } q_{i-1}<N-1,\\
u_1\in L^{r}(\partial\Omega), \text{ for any }r\ge 1 &\text{if } q_{i-1}=N-1,\\
u_1\in C^{\alpha}(\overline{\Omega}),  &\text{if } q_{i-1}>N-1\,,
\end{cases}
\end{equation}
and
\begin{equation}\label{cont:Ga:3}
\begin{cases}
u_2\in L^{r_i'}(\partial\Omega), \text{ for } r_i':=\frac{(N-1)q_{i-1}'}{N-1-q_{i-1}'}   &\text{if } q_{i-1}'<N-1,\\
u_2\in L^{r}(\partial\Omega), \text{ for any }r\ge 1 &\text{if } q_{i-1}'=N-1,\\
u_2\in C^{\alpha}(\overline{\Omega}),  &\text{if } q_{i-1}'>N-1.
\end{cases}
\end{equation}
By \eqref{eq:bound-trace} and the continuity of the Nemytski operator, it follows that for $u_1\in L^{r_i}(\partial \Omega), \ u_2\in L^{r_i^{\prime}}(\partial \Omega)$, 
\begin{equation}\label{cont:Ga:4}
h_1\in L^{q_i}(\partial\Omega), \qquad h_2\in L^{q_i^{\prime}}(\partial\Omega),\qq{where} q_i:=\frac{r_i^{\prime}}{p_2} \text{ and } q_i^{\prime}:=\frac{r_i}{p_1}.
\end{equation}
Summarizing the relations on the exponents of the Lebesgue spaces given by
\eqref{eq:bound-trace:0} - \eqref{cont:Ga:4}, it 
follows that, whenever $q_{i-1},\ q_{i-1}'<N-1$, we get
\begin{align}\label{def:qi:ri}
r_0=r_0'=2_*,\quad
\frac1{q_i}=\frac{p_2}{r_i^{\prime}},\quad  \frac1{q_i^{\prime}}=\frac{p_1}{r_i},
\quad \frac1{r_i}=\frac1{q_{i-1}}-\frac1{N-1},\text{ and } \frac1{r_i'}=\frac1{q_{i-1}^{\prime}}-\frac1{N-1}.
\end{align}
Now, we analyze different cases$\colon$
\begin{enumerate}[a)]
\item If $h_1\in L^{N-1}(\partial\Omega)$, then by \eqref{cont:Ga:2}, $u_1\in L^{r}(\partial\Omega)$ for any $r\ge 1.$
On the other hand, if $h_1\in L^{q_{i-1}}(\partial\Omega)$ with $q_{i-1}> N-1$, then by \eqref{cont:Ga:2} $u_1\in C^{\alpha}(\overline{\Omega})$. 
In both cases, $u_1\in L^{r_{i}}(\partial\Omega),$ for any $r_i \ge 1$. Therefore, $h_2\in L^{q'_{i-1}}(\partial\Omega)$ for any $ q'_{i-1}\ge 1$, in particular for $q'_{i-1}> N-1$. 
Thus, by \eqref{cont:Ga:3}, $u_2\in C^{\alpha}(\overline{\Omega})$. Consequently, $h_1\in L^{q_{i-1}}(\partial\Omega)$ with $q_{i-1} > N-1$, hence $u_1\in C^{\alpha}(\overline{\Omega})$ as well.
Likewise, if $q_{i-1}' \geq N-1$, then $u_1, u_2\in C^{\alpha}(\overline{\Omega})$.
Hence, if either $q_{i-1} \geq N-1$ or $q_{i-1}' \geq N-1,$ for some $i\in \mathbb{N}$, then $u_1, u_2\in C^{\alpha}(\overline{\Omega})$ for some $\alpha\in (0,1)$.
    \item  Suppose $q_i<N-1$ and $q_i^{\prime}<N-1$ for all $i\in \mathbb{N}$. We consider two cases again.    
    \begin{enumerate}
        \item[($b_1$)] Let    
    $p_1< \frac{N}{N-2}$ and $p_2 < \frac{N}{N-2}$.
    Using $p_2 < \frac{N}{N-2}$, \eqref{def:qi:ri}, one gets
\begin{align*}
    \dfrac{1}{r_1}=\dfrac{1}{q_0}-\dfrac{1}{N-1}=\dfrac{p_2}{r_0'}-\dfrac{1}{N-1}<\dfrac{N}{2(N-1)}-\dfrac{1}{N-1}=\dfrac{N-2}{2(N-1)}=\dfrac{1}{r_0}\,.
    \end{align*}

    Analogously, $p_1<\dfrac{N}{N-2}$ implies 
    $
    \dfrac{1}{r_1^{\prime}}<\dfrac{1}{r_0^{\prime}}\,.
    $
    Assume that $r_i > r_{i-1}$ and $r_i' > r_{i-1}'$ for some $i \geq 1$. Then \eqref{def:qi:ri} gives
  \begin{align*}
    \dfrac{1}{r_{i+1}}=\dfrac{1}{q_i}-\dfrac{1}{N-1}=\dfrac{p_2}{r_i'}-\dfrac{1}{N-1}
    < \dfrac{p_2}{r_{i-1}'}-\dfrac{1}{N-1}=\dfrac{1}{q_{i-1}}-\dfrac{1}{N-1}=\dfrac{1}{r_i} \,.
    \end{align*}
By induction, $\{r_i\}$ and $\{r_i^{\prime}\}$ are strictly increasing, and hence  $\{q_i\}$, $\{q_i'\}$, $\{m_i\}$ and $\{m_i^{\prime}\}$ are strictly increasing, by \eqref{cont:Ga:4} and \eqref{eq:bound-trace:3}. 

Therefore, since $q_i$ and $q_i'$ are bounded above by $N-1$, $q_i\rightarrow q_{\infty}$ and $q_i'\rightarrow q_{\infty}'$ for some $1\leq q_{\infty},q_{\infty}'\leq N-1$. If $q_{\infty}=N-1$, 

then  $r_i\to\infty$ by \eqref{cont:Ga:2}, which contradicts \eqref{cont:Ga:4}. Similar argument holds when $q_{\infty}'=N-1$ using \eqref{cont:Ga:3} and \eqref{cont:Ga:4}.  Therefore, $q_{\infty}<N-1$ and $q_{\infty}'<N-1$. Let us define 
     \begin{align}\label{eq:limits}
     \nonumber r_{\infty}:&=\lim_{i\rightarrow \infty}{r_i}=\lim_{i\rightarrow \infty}{p_1\,q_i'}=p_1 q_{\infty}'>0,\\
     r_{\infty}'&:=\lim_{i\rightarrow \infty}{r_i^{\prime}}=\lim_{i\rightarrow \infty}{p_2\,q_i}=p_2 q_{\infty}>0. 
     \end{align}
 By using the first equality of \eqref{cont:Ga:4}  and the third equality of \eqref{def:qi:ri}, we get 
     \begin{align}\label{dif:q}
        q_{i+1}-q_i&=\dfrac{r_{i+1}^{\prime}-r_i^{\prime}}{p_2}=\dfrac{r_{i+1}^{\prime}r_i^{\prime}}{p_2}\left(\dfrac{1}{r_i^{\prime}}-\dfrac{1}{r_{i+1}^{\prime}}\right)\\
        &=\dfrac{r_{i+1}^{\prime}r_i^{\prime}}{p_2}\left(\dfrac{1}{q_{i-1}'}-\dfrac{1}{q_i'}\right)=\dfrac{r_{i+1}^{\prime}r_i^{\prime}}{p_2}\left(\dfrac{q_i'-q_{i-1}'}{q_{i-1}'q_i'}\right).\nonumber
     \end{align}
     Then, dividing \eqref{dif:q} by $q_i'-q_{i-1}'$ and using \eqref{cont:Ga:4} again, we get
     \begin{equation}\label{eq:quotient-1}
        \dfrac{q_{i+1}-q_i}{q_i'-q_{i-1}'}=\dfrac{r_{i+1}^{\prime}r_i^{\prime}}{p_2q_{i-1}'q_i'}=\dfrac{p_1^2}{p_2}\, \dfrac{r_{i+1}^{\prime}\, r_i'}{r_{i-1}\, r_i}. 
     \end{equation}
     Analogously,
      \begin{equation}\label{eq:quotient-2}
                 \dfrac{q_{i+1}'-q_i'}{q_i-q_{i-1}}=\dfrac{p_2^2}{p_1}\, \dfrac{r_{i+1}\, r_i}{r_{i-1}'\, r_i'}\,.
      \end{equation} 
   Multiplying \eqref{eq:quotient-1} and \eqref{eq:quotient-2} and then taking the limit and using \eqref{eq:limits}, we get
     \[
     \lim_{i\rightarrow \infty}\left(\dfrac{q_{i+1}-q_i}{q_i-q_{i-1}}\right)\left(\dfrac{q_{i+1}'-q_i'}{q_i'-q_{i-1}'}\right)=p_1p_2>1\,,
     \]
a contradiction to the fact that $\{q_i\}$ and $\{q_i'\}$ are convergent sequences and so Cauchy sequences.

\medskip
     
Thus, there exists $i_0\in \mathbb{N}$ such that either $q_{i_0}\geq N-1$ or $q'_{i_0}\geq N-1$ and therefore, $u_1,u_2\in C^{\alpha}(\overline{\Omega})$ for some $\alpha\in (0,1)$.
\item[($b_2$)] Without loss of generality, assume that $p_1 < \frac{N}{N-2} $ and $\ p_2 = \frac{N}{N-2}$. Then, it can be shown that
$$
r_0 = r_1 < r_2 = r_3 < r_4 \cdots \mbox{ and }
r_0' < r_1' = r_2' < r_3' = r_4' \cdots\,.
$$
We claim that if
$
r_{2n}=r_{2n+1} < r_{2n+2}
\quad \mbox{and} \quad
r'_{2n} < r'_{2n+1} = r'_{2n+2}\,,
$
then 
$$
i)\ r_{2n+2}=r_{2n+3},\ \ ii)\  r'_{2n+2}<r'_{2n+3},\ \ iii)\ r_{2n+3}<r_{2n+4},\ \  iv)\ r'_{2n+3}=r'_{2n+4}\,.
$$
Indeed, using \eqref{cont:Ga:4} and \eqref{def:qi:ri}, we have
\begin{align*}
    \dfrac{1}{r_{2n+3}}=\dfrac{1}{q_{2n+2}}-\dfrac{1}{N-1}=\dfrac{p_2}{r_{2n+2}'}-\dfrac{1}{N-1}
    = \dfrac{p_2}{r_{2n+1}'}-\dfrac{1}{N-1}=\dfrac{1}{q_{2n+1}}-\dfrac{1}{N-1}=\dfrac{1}{r_{2n+2}} \,,    
    \end{align*}
\begin{align*}
    \dfrac{1}{r'_{2n+3}}=\dfrac{1}{q'_{2n+2}}-\dfrac{1}{N-1}=\dfrac{p_1}{r_{2n+2}}-\dfrac{1}{N-1}
    < \dfrac{p_1}{r_{2n+1}}-\dfrac{1}{N-1}=\dfrac{1}{q'_{2n+1}}-\dfrac{1}{N-1}=\dfrac{1}{r'_{2n+2}} \,.    
\end{align*}
Similarly, iii) and iv) follow. 

By induction, $\{r_{2i}\}$, $\{r_{2i+1}\}$, $\{r_{2i}^{\prime}\}$ and $\{r_{2i+1}^{\prime}\}$ are strictly increasing, and consequently $\{q_{2i}\}$, $\{q_{2i+1}\}$, $\{q_{2i}^{\prime}\}$ and $\{q_{2i+1}^{\prime}\}$ are strictly increasing by \eqref{cont:Ga:4}. Reasoning as in \eqref{dif:q}, we get
     \begin{align*}%\label{dif:q:2}
        q_{i+2}-q_i&=\dfrac{r_{i+2}^{\prime}-r_i^{\prime}}{p_2}=\dfrac{r_{i+2}^{\prime}r_i^{\prime}}{p_2}\left(\dfrac{1}{r_i^{\prime}}-\dfrac{1}{r_{i+2}^{\prime}}\right)\\
        &=\dfrac{r_{i+2}^{\prime}r_i^{\prime}}{p_2}\left(\dfrac{1}{q_{i-1}'}-\dfrac{1}{q_{i+1}'}\right)=\dfrac{r_{i+2}^{\prime}r_i^{\prime}}{p_2}\left(\dfrac{q_{i+1}'-q_{i-1}'}{q_{i-1}'q_{i+1}'}\right).\nonumber
     \end{align*}
Proceeding as in the case $(b_1)$, we arrive at the contradiction 
 \[
     \lim_{i\rightarrow \infty}\left(\dfrac{q_{i+2}-q_i}{q_{i+1}-q_{i-1}}\right)\left(\dfrac{q_{i+1}'-q_i'}{q_{i+1}'-q_{i-1}'}\right)=p_1p_2>1\,.
     \]

Thus, as in part $(b_1)$, there exists $i_0\in \mathbb{N}$ such that either $q_{i_0}\geq N-1$ or $q'_{i_0}\geq N-1$ and therefore, $u_1,u_2\in C^{\alpha}(\overline{\Omega})$ for some $\alpha\in (0,1)$.     
\end{enumerate}
\end{enumerate}
 Therefore, we conclude that $u_i=T h_i\in C^{\alpha}(\overline{\Om})$, $i=1,2$ with $\|u_1\|_{C^{\alpha}(\overline{\Om})}\leq C\|h_1\|_{L^{q_0}(\p\Om)}$, $\|u_2\|_{C^{\alpha}(\overline{\Om})}\leq C\|h_2\|_{L^{q_0'}(\p\Om)}$ for some $\alpha\in (0,1)$. Using the facts that  $p_2q_0=r_0=2_*=r'_0=p_1q_0'$ (see \eqref{eq:bound-trace:0} and \eqref{eq:bound-trace:2} for $i=1$), and \eqref{eq:bound-trace}, we get
\begin{align*} 
\|u_1\|_{C^{\alpha}(\overline{\Om})}\leq C\|f_1(u_2)\|_{L^{q_0}(\p\Om)}\leq C\|1+|\Gamma u_2|^{p_2}\|_{L^{q_0}(\p\Om)}\leq C(1+\|\Gamma u_2\|_{L^{2_*}(\p\Om)}^{p_2}),\\
\|u_2\|_{C^{\alpha}(\overline{\Om})}\leq C\|f_2(u_1)\|_{L^{q_0'}(\p\Om)}\leq C\|1+|\Gamma u_1|^{p_1}\|_{L^{q_0'}(\p\Om)}\leq C(1+\|\Gamma u_1\|_{L^{2_*}(\p\Om)}^{p_1}),
\end{align*}
which implies \eqref{eq:estimate}. 
\medskip

Then, using the facts that  $u_i \in C^{\alpha}(\overline{\Omega})$ and $f_i$ are locally Hölder  continuous for $i=1,2$, 
it follows that  $f_1(u_2)$ and $f_2(u_1)$ are bounded. Therefore, 
it follows from \cite[Theorem 2]{Lieberman_1988}
that 
 $u_1,u_2\in C^{1,\al}(\overline{\Omega})$ for some $\al=\al(\g,N) \in (0, 1)$. Finally, by interior elliptic regularity to equations in $\Omega$ of \eqref{pde2}, it follows that $u_1,u_2\in C^{2,\al}(\Omega)\cap C^{1,\al}(\overline{\Omega})$. This completes the proof. 
\end{proof}
The following uniform \emph{a priori} bound result from \cite[Thm.\ 3.7]{Bon-Ros_2001} for strongly coupled systems is crucial for applying degree theory as well as to establish that the branch of positive weak solutions indeed bifurcate from infinity at $\la_{\infty}=0$. We note that all the coefficients $a, b, c, d$ in the hypothesis (H3) of \cite{Bon-Ros_2001} do not need to be strictly positive. The proof in \cite[Thm.~3.7]{Bon-Ros_2001} holds even when $a, b, c, d $ are  non-negative functions with $a(x)+ b(x)\geq k>0$ and $c(x)+d(x) \geq k>0$. Therefore, it holds for our case $a, b, c, d \in \mathbb{R}$ with $a, d  >0$, and $b=c=0$, that is, when $f_i$ satisfy the hypothesis \eqref{H_inf}.
\begin{prop}
 \label{th:Rossi}
 Suppose that the nonlinearities $f_i$ satisfy \eqref{H_inf}, $i=1,2$. 
 Then there exists a constant $M>0$ such that every nonnegative solution $(u_1,u_2)$ of \eqref{pde} satisfies 
 \begin{equation*}
 %\label{bound:apriori}
     \|(u_1,u_2)\|_{C(\overline{\Omega})^2}\leq M,
 \end{equation*}
 where $M$ is independent of the solution $(u_1, u_2)$.
\end{prop}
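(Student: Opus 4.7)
The plan is to follow the classical blow-up/rescaling strategy of Gidas–Spruck type, adapted to a coupled system with boundary nonlinearity, which is the scheme carried out in \cite{Bon-Ros_2001}. I argue by contradiction: suppose a sequence of nonnegative weak solutions $(u_1^n,u_2^n)$ of \eqref{pde} exists with $M_n:=\|(u_1^n,u_2^n)\|_{C(\overline{\Om})^2}\to\infty$. By Theorem~\ref{th:bootstrap}, each $(u_1^n,u_2^n)$ is in fact classical, and because $-\Delta u_i^n+u_i^n=0$ in $\Om$ with $u_i^n\geq 0$, the strong maximum principle forces each maximum to be attained on $\p\Om$. I pick $x_n\in\p\Om$ capturing the blow-up and a local $C^{2,\g}$ diffeomorphism flattening $\p\Om$ near $x_n$ so that the problem can be compared to one on a piece of the half-space $\R^N_+$.

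The rescaling exponents are dictated by matching the two subcritical boundary reactions. Using \eqref{H_inf}, I look for $\al_1,\al_2>0$ so that both boundary conditions scale consistently, which requires $\al_1+1=p_2\al_2$ and $\al_2+1=p_1\al_1$, uniquely solved (since $p_1p_2>1$) by
\begin{equation*}
\al_1=\frac{p_2+1}{p_1p_2-1},\qquad \al_2=\frac{p_1+1}{p_1p_2-1}\,.
\end{equation*}
Choosing $\eps_n\to 0$ so that $\eps_n^{\al_1}u_1^n(x_n)+\eps_n^{\al_2}u_2^n(x_n)$ is normalized to $1$, I define
\begin{equation*}
v_i^n(y):=\eps_n^{\al_i}u_i^n\bigl(x_n+\eps_n\,\Phi_n(y)\bigr),\qquad i=1,2,
\end{equation*}
on enlarging half-balls. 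A direct computation shows that the zeroth-order term and the metric coefficients degenerate in the limit, while \eqref{H_inf} guarantees the boundary reaction $\eps_n^{\al_i+1}f_i(\eps_n^{-\al_j}v_j^n)$ converges locally uniformly to $b_j v_j^{p_j}$. Combined with the uniform bound built into the normalization and the regularity from Theorem~\ref{th:bootstrap}, Arzelà–Ascoli and a diagonal extraction yield, after passing to a subsequence, a nontrivial nonnegative classical solution $(V_1,V_2)$ of the limiting problem
\begin{equation*}
\begin{cases}
-\Delta V_1=0,\quad -\Delta V_2=0 &\text{in }\R^N_+,\\
\dfrac{\p V_1}{\p\eta}=b_2 V_2^{p_2},\quad \dfrac{\p V_2}{\p\eta}=b_1 V_1^{p_1} &\text{on }\p\R^N_+\,,
\end{cases}
\end{equation*}
(or the analogous problem on $\R^N$ if the accumulation point had drifted into the interior, a scenario ruled out by Proposition~\ref{max_2}).

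The decisive step, and the one I expect to be the main obstacle, is a Liouville-type nonexistence theorem for this coupled half-space system in the subcritical range $1<p_1,p_2\leq N/(N-2)$ with strict inequality in at least one exponent. For a scalar boundary-reaction equation this is classical (Hu's theorem), but for the strongly coupled system it requires a Kelvin transform combined with a simultaneous moving-planes argument on both components, and is sensitive to the borderline case where one $p_i$ attains $N/(N-2)$. This is precisely the content of \cite[Thm.~3.7]{Bon-Ros_2001}, and the subcriticality assumption in \eqref{H_inf} is the minimal one under which it is known. Once Liouville is in hand, the contradiction is immediate: the normalization forces $(V_1,V_2)\not\equiv 0$. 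The only remaining point is the observation highlighted in the paragraph preceding the statement, namely that the hypothesis (H3) of \cite{Bon-Ros_2001} on the coupling coefficients is used only through the estimate $a(x)+b(x)\geq k>0$ and $c(x)+d(x)\geq k>0$, so the diagonal case $a,d>0$, $b=c=0$ of our system is covered verbatim.
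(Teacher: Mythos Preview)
Your proposal is correct and aligns with the paper's treatment: the paper does not supply its own proof of this proposition but simply invokes \cite[Thm.~3.7]{Bon-Ros_2001}, together with the remark (which you also make) that hypothesis (H3) there only requires $a+b\geq k>0$ and $c+d\geq k>0$, so the present case $a,d>0$, $b=c=0$ is covered. Your outline is precisely the Gidas--Spruck style blow-up scheme carried out in that reference---correct rescaling exponents $\alpha_1=(p_2+1)/(p_1p_2-1)$, $\alpha_2=(p_1+1)/(p_1p_2-1)$, passage to the half-space Liouville problem, and the nonexistence result in the subcritical range---so you have in fact sketched more than the paper itself provides.
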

\section{Proof of Theorem~\ref{thm:local}}
\label{sec:local}

To prove Theorem \ref{thm:local}, we follow the ideas of the corresponding scalar case \cite{BCDMP_nonlbc} and the case of the coupled system of equations for the case of the zero Dirichlet boundary condition \cite{Chh-Girg-2009, Chh-Gir-2013}. Specifically,  we introduce rescaled variables, involving the multiplicative parameter $\lambda$, which connects to the case of pure powers in the limit. The limiting problem case is dealt with using the Leray-Schauder degree theory, which then enables us to use homotopy to connect with the rescaled and hence the original problem.
\subsection{Rescaling}
%\label{subsec:3.1}
For a given positive solution $(u_1,u_2)$ of \eqref{pde}, we consider the following rescaled functions 
\begin{equation}
    \label{eq:re-scaling}
    w_1=\lambda^{\theta_1}u_1 \quad \text{ and } \quad w_2=\lambda^{\theta_2}u_2\,,
\end{equation}
 where $\theta_1,\theta_2>0$ satisfy 
\begin{equation}
\label{theta_eq}
1+\theta_2-\theta_1p_1=0 \mbox{ and }
1+\theta_1-\theta_2p_2=0\,.
\end{equation}
Then, from \eqref{theta_eq}, it follows that for $\lambda>0$,  $(w_1, w_2)$ satisfy 
\begin{equation}
    \label{pde_rscld}
\left.
	\begin{array}{rclll}
-\De w_1+w_1 & = 0 \quad \mbox{in}\quad \Omega\,,&
&\frac{\partial w_1}{\partial \eta}&=\lambda^{1+\theta_1} f_1(\lambda^{-\theta_2}w_2)\quad \mbox {on}\quad \partial\Omega\,;\\
-\Delta w_2 +w_2 &=  0 \quad \mbox{in}\quad \Omega\,, &
&\frac{\partial w_2}{\partial \eta}&=\lambda^{1+\theta_2} f_2(\lambda^{-\theta_1}w_1)\quad \mbox {on}\quad \partial\Omega\,.
\end{array}
	\right\} 
\end{equation}
Observe that for any $\lambda>0$, $(w_1,w_2)$ is a solution to \eqref{pde_rscld} if and only if $(u_1,u_2)$ is a solution to \eqref{pde}. 
Now for any $s >0$, we define
\begin{align}
 \widetilde{f}_1(\lambda,s) & :=\lambda^{1+\theta_1}f_1(\lambda^{-\theta_2}s) = \lambda^{1+\theta_1}\big[f_1(\lambda^{-\theta_2}s)-b_2\big(\lambda^{-\theta_2}s\big)^{p_2}\big]+ 
 b_2s^{p_2} ,\notag
 \end{align} 
 and 
 \begin{align}
 \widetilde{f}_2(\lambda,s) & :=\lambda^{1+\theta_2}f_2(\lambda^{-\theta_1}s) = \lambda^{1+\theta_2}\big[f_2(\lambda^{-\theta_1}s)-b_1\big(\lambda^{-\theta_1}s\big)^{p_1}\big]+ 
 b_1s^{p_1}\,. \notag
 \end{align} 
Then, since $\theta_i > 0$, $\lambda^{-\theta_i}s \to \infty$ as $\lambda \to 0^+$ for any $s>0$  and $i=1, 2$. Hence, using hypothesis \eqref{H_inf} in the  definitions of $\widetilde{f}_1, \widetilde{f}_2$,  
we get $\widetilde{f_1}(0,s)=b_2s^{p_2}$ and
$\widetilde{f}_2(0,s)=b_1s^{p_1}$. Consequently, as $\lambda \to 0^+$, \eqref{pde_rscld} reduces to the following {limiting problem}: 
\begin{equation}
\label{pde_limtng}
\left.
	\begin{array}{rclll}
-\De w_1+w_1 & = 0 \quad \mbox{in}\quad \Omega\,,&
&\frac{\partial w_1}{\partial \eta}&=b_2w_2^{p_2}\quad \mbox {on}\quad \partial\Omega\,;\\
-\Delta w_2 +w_2 &=  0 \quad \mbox{in}\quad \Omega\,, &
&\frac{\partial w_2}{\partial \eta}&=b_1w_1^{p_1}\quad \mbox{on}\quad \partial\Omega\,.
\end{array}
	\right\} 
\end{equation}
Now we establish the existence of a nontrivial solution to the limiting problem \eqref{pde_limtng} using degree theory.

\subsection{A Compact Operator}
\label{subsec:3.2} Here we construct a compact operator, associated with the rescaled problem \eqref{pde_rscld}, to apply the Leray-Schauder degree theory. 
Now for each $\lambda >0$ fixed, let us define the operator $\widetilde{F}\colon [0,\infty)\times (C(\partial\Omega))^2 \rightarrow (C(\partial\Omega))^2$ as follows
\begin{equation*}%\label{F-tilde}
    \widetilde{F}(\lambda,(z_1,z_2)):=
    \begin{pmatrix} \widetilde{f_1}(\lambda,
    z_2)\\
    \widetilde{f_2}(\lambda, 
    z_1)\end{pmatrix}.
\end{equation*}
$\widetilde{F}$ is a continuous operator. Next, we define $\widetilde{G}\colon [0,\infty)\times (C(\partial\Omega))^2 \rightarrow (C(\partial\Omega))^2$ by
\begin{equation*}%\label{resolvent}
\widetilde{G}(\lambda,(z_1,z_2)):= S\circ \widetilde{F}(\lambda,(z_1,z_2))=\begin{pmatrix} S\widetilde{f_1}(\lambda,
    z_2)\\
    S\widetilde{f_2}(\lambda,
    z_1)\end{pmatrix}\,,
\end{equation*}
where $S:=\Ga\circ T$ defined in \eqref{D:to:N} is the Neumann-to-Dirichlet operator, $\Ga$ is the trace operator (see \eqref{def:trace}-\eqref{cont:Ga}), and $T$ is the solution operator of the linear problem \eqref{linear} as defined in \eqref{T-bounded}.  
Using the linear theory in Subsection~\ref{regularity:linear}, by 
choosing $q>N-1$, then $m=\frac{Nq}{N-1}>N$, and it follows that 
\begin{align*}
 (C(\p\Om))^2 \stackrel{\widetilde{F}}{\longrightarrow} (C(\p\Om))^2 \stackrel{i}{\longrightarrow} (L^q(\p\Om))^2  \stackrel{T}{\longrightarrow} (W^{1,m}(\Om))^2 \stackrel{\Gamma}{\hookrightarrow}(C(\p\Om))^2,
 \end{align*} 
where $i$ is the natural injection. Hence $\widetilde{G}$ is a compact operator.

Then, by Theorem \ref{th:bootstrap}, and the definition of the solution set given in \eqref{solution:set}, 
solution to \eqref{pde_rscld} is a fixed point of the operator $\widetilde{G}$. Indeed, for $\lambda>0$
\begin{equation}
\label{eq:fixed-point}
\widetilde{G}\big(\lambda,(\Gamma u_1,\Gamma u_2)\big) = (\Gamma u_1, \Gamma u_2) \iff  \big(\lambda, (u_1,u_2)\big) \mbox{ is a weak solution to \eqref{pde}} .
\end{equation}
Moreover, the continuity of $\widetilde{G}$ guarantees a solution to the limiting problem at $\lambda =0$. Indeed,
\begin{equation*}
%\label{eq:fixed-point:0}
\widetilde{G}\big(0,(\Gamma u_1,\Gamma u_2)\big) = (\Gamma u_1, \Gamma u_2) \iff  \big(0, (u_1,u_2)\big) \mbox{ is a weak solution to  \eqref{pde_limtng}} \,.
\end{equation*}

\subsection{Solution to Rescaled Problem}
%\label{subsec:3.3}

The following lemma shows that the limiting problem \eqref{pde_limtng} has a nontrivial solution.
\begin{lem} \label{deg_new}
There exist $0 < r < R$ such that the limiting problem \eqref{pde_limtng} has no solution whenever $\|(w_1,w_2)\|_{(C(\p\Om))^2}=r$ and $\|(w_1,w_2)\|_{(C(\p\Om))^2}=R$, and 
\begin{equation*}%\label{eq:deg_new}
\deg (I-\widetilde{G}(0,\cdot), B_R (0) \setminus \overline{B_r}(0),0)=-1\,.     
\end{equation*}
\end{lem}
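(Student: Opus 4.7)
The plan is to compute the degrees on $B_r$ and $B_R$ separately and combine them via the excision property, aiming at the identity $-1=0-1$. For the inner ball, the key is the superlinear nature of $\widetilde{G}(0,(z_1,z_2))=(S(b_2 z_2^{p_2}),S(b_1 z_1^{p_1}))$. Since $S\colon L^q(\partial\Omega)\to C(\partial\Omega)$ is bounded for $q>N-1$ (see Subsection \ref{regularity:linear}), one obtains an estimate of the form $\|\widetilde{G}(0,w)\|_{C(\partial\Omega)^2}\le C\|w\|_{C(\partial\Omega)^2}^{\min(p_1,p_2)}$, and because $p_1,p_2>1$ this is strictly less than $r$ for $r$ sufficiently small. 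The linear homotopy $\tau\mapsto I-\tau\widetilde{G}(0,\cdot)$ is then admissible on $\overline{B_r}$, so $\deg(I-\widetilde{G}(0,\cdot),B_r,0)=\deg(I,B_r,0)=1$, and simultaneously no fixed point lies on $\partial B_r$.

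For the outer ball, Proposition \ref{th:Rossi} supplies a uniform $C(\overline{\Omega})^2$-bound $M$ on all nonnegative solutions of the limiting problem \eqref{pde_limtng}, so taking $R>\max\{r,M\}$ excludes fixed points from $\partial B_R$. To show the degree vanishes on $B_R$, I will use the homotopy
\[
H_t(w):=w-\widetilde{G}(0,w)-t\,e,\qquad t\in[0,T^*],\qquad e:=\bigl(\Gamma T(1),\Gamma T(1)\bigr),
\]
whose zeros correspond to solutions $(u_1,u_2)$ of the shifted system with $\partial u_1/\partial\eta=b_2 u_2^{p_2}+t$ and $\partial u_2/\partial\eta=b_1 u_1^{p_1}+t$ on $\partial\Omega$. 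Testing both boundary equations against the Steklov eigenfunction $\varphi_1$, integrating by parts, and then applying Jensen's inequality with the probability measure $\varphi_1\,d\sigma/\Phi$ (where $\Phi:=\int_{\partial\Omega}\varphi_1$) yields
\[
\mu_1 A \ge b_2\,\Phi^{1-p_2} B^{p_2}, \qquad \mu_1 B \ge b_1\,\Phi^{1-p_1} A^{p_1},
\]
where $A:=\int_{\partial\Omega}u_1\varphi_1$ and $B:=\int_{\partial\Omega}u_2\varphi_1$. Since $p_1p_2>1$, composing these two inequalities forces $A\le A_*$ for a constant $A_*>0$ independent of $t$; meanwhile the comparison principle (via Proposition \ref{max_2}) gives $u_1\ge tT(1)$ and hence $A\ge ct$ for some $c>0$. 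Choosing $T^*>A_*/c$ thus rules out every solution of $H_{T^*}=0$, so $\deg(H_{T^*},B_R,0)=0$, and by homotopy invariance $\deg(I-\widetilde{G}(0,\cdot),B_R,0)=0$. The excision identity then yields $-1$ as claimed.

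The main obstacle is the admissibility of the homotopy $\{H_t\}_{t\in[0,T^*]}$ on $\partial B_R$: I must secure a uniform a priori bound for every solution of $H_t=0$ valid for all $t\in[0,T^*]$, not merely for $t=0$. I plan to obtain this by revisiting the blow-up argument underlying Proposition \ref{th:Rossi} of \cite{Bon-Ros_2001}; since $t$ is a bounded additive perturbation of the boundary datum, after the usual rescaling it becomes subleading compared to the superlinear terms, so the limiting blow-up profile and the Liouville-type contradiction are unaffected uniformly in $t$. Enlarging $R$ once at the outset if necessary to absorb this uniform bound, the argument described above goes through.
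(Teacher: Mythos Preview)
Your argument is correct and is precisely the de Figueiredo--Lions--Nussbaum scheme that underlies the result the paper invokes. The paper does not prove this lemma independently: it simply checks that the pure-power nonlinearities in \eqref{pde_limtng} satisfy hypotheses (H1) and (H2.iii) of \cite{Bon-Ros_2001} and cites \cite[Thm.~3.2]{Bon-Ros_2001} for the existence of $r,R$ and the degree identity. You instead reprove that theorem from scratch: superlinearity gives $\deg(I-\widetilde{G}(0,\cdot),B_r,0)=1$ via the linear homotopy to the identity, and the translation homotopy $w\mapsto w-\widetilde{G}(0,w)-te$ together with the Steklov--Jensen inequality and the uniform blow-up bound gives $\deg(I-\widetilde{G}(0,\cdot),B_R,0)=0$. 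This is exactly the mechanism behind \cite[Thm.~3.2]{Bon-Ros_2001}, so the two routes coincide at the level of ideas; yours is self-contained but obliges you to redo the blow-up a~priori estimate for the $t$-shifted system, which, as you note, is routine since the bounded additive perturbation is subleading after rescaling.
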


\begin{proof}
The proof follows from \cite[Thm.\ 3.2]{Bon-Ros_2001}, where authors consider a more general system which include the limiting system \eqref{pde_limtng}. One can verify that the power nonlinearities in \eqref{pde_limtng} satisfy the conditions (H1) and (H2.iii) of \cite[Page 6]{Bon-Ros_2001}. Hence, \eqref{pde_limtng} has a nontrivial solution in $B_R (0) \setminus \overline{B_r}$.
\end{proof}

%%%%%

% \subsection{solution to Re-scaled Problem}
%\label{subsec:3.4}
Now we will use Lemma~\ref{deg_new} and $\lambda \geq 0$ as a homotopy parameter to establish the following result, which guarantees the existence of a positive weak solution to the rescaled problem \eqref{pde_rscld}. 
\begin{lem}\label{rscld}
There exists $\widetilde{\lambda}>0$ such that 
\begin{itemize}
    \item[(a)] $\widetilde{G}(\lambda, (w_1,w_2)) \neq (w_1,w_2)$ for all  $\lambda \in [0,\widetilde{\lambda}]$ whenever $\|(w_1,w_2)\|_{(C(\p\Om))^2}\in \{r,R\}$.
    \item[(b)] $\deg (I-\widetilde{G}(\lambda,\cdot),B_R \backslash \overline{B_r},0)=-1$ for all $\lambda \in [0,\widetilde{\lambda}]$. 
\end{itemize}
\end{lem}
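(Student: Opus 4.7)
The plan is to treat this as a degree-theoretic homotopy statement with $\lambda$ playing the role of the homotopy parameter, using Lemma~\ref{deg_new} at $\lambda=0$ as the anchor. Two structural facts about $\widetilde{G}$ will be used throughout: (i) joint continuity on $[0,\infty)\times (C(\p\Om))^2$, which rests on the algebraic rearrangement already performed above the statement, rewriting $\widetilde{f}_i(\lambda,s)$ so that, via \eqref{theta_eq} and \eqref{H_inf}, one has $\widetilde{f}_1(\lambda,s)\to b_2 s^{p_2}$ and $\widetilde{f}_2(\lambda,s)\to b_1 s^{p_1}$ locally uniformly in $s$ as $\lambda\to 0^+$; and (ii) compactness of $\widetilde{G}(\lambda,\cdot)$ uniformly for $\lambda$ in any bounded interval, thanks to the factorization through $S=\Gamma\circ T$ established in Subsection~\ref{subsec:3.2}.

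For part (a), I argue by contradiction. If no such $\widetilde{\lambda}>0$ exists, then I can extract a sequence $\lambda_n\to 0^+$ together with fixed points $(w_{1,n},w_{2,n})=\widetilde{G}(\lambda_n,(w_{1,n},w_{2,n}))$ with $\|(w_{1,n},w_{2,n})\|_{(C(\p\Om))^2}\in\{r,R\}$. Passing to a subsequence, I may assume the norm takes a single value $\rho\in\{r,R\}$. Since the sequence is bounded and equals its own $\widetilde{G}$-image under the bounded $\lambda_n$'s, compactness of $\widetilde{G}$ supplies a further subsequence converging in $(C(\p\Om))^2$ to some $(w_1,w_2)$ with $\|(w_1,w_2)\|_{(C(\p\Om))^2}=\rho$. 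Joint continuity then forces $\widetilde{G}(0,(w_1,w_2))=(w_1,w_2)$, exhibiting a solution of the limiting problem \eqref{pde_limtng} on the sphere of radius $\rho$, which directly contradicts Lemma~\ref{deg_new}.

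For part (b), with $\widetilde{\lambda}$ from part (a) in hand, I consider the compact homotopy $H\colon [0,\widetilde{\lambda}]\times\overline{B_R\setminus\overline{B_r}}\to (C(\p\Om))^2$ defined by $H(\lambda,w)=\widetilde{G}(\lambda,w)$. Part (a) says that $w-H(\lambda,w)\neq 0$ whenever $\|w\|_{(C(\p\Om))^2}\in\{r,R\}$, i.e.\ on the topological boundary of the open annulus $B_R\setminus\overline{B_r}$. The homotopy invariance of the Leray--Schauder degree therefore yields
$$
\deg\bigl(I-\widetilde{G}(\lambda,\cdot),\,B_R\setminus\overline{B_r},\,0\bigr)=\deg\bigl(I-\widetilde{G}(0,\cdot),\,B_R\setminus\overline{B_r},\,0\bigr)=-1
$$
for every $\lambda\in[0,\widetilde{\lambda}]$, the last equality being Lemma~\ref{deg_new}.

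The only genuinely delicate step is the joint continuity of $\widetilde{G}$ at $\lambda=0$, since the raw expressions $\lambda^{1+\theta_i}f_i(\lambda^{-\theta_j}s)$ are of indeterminate form there. This is precisely what the pre-statement decomposition achieves: the $b_j s^{p_j}$-term is exact thanks to $1+\theta_i-\theta_j p_j=0$, while the remainder $\lambda^{1+\theta_i}\bigl[f_i(\lambda^{-\theta_j}s)-b_j(\lambda^{-\theta_j}s)^{p_j}\bigr] = s^{p_j}\bigl[f_i(\lambda^{-\theta_j}s)/(\lambda^{-\theta_j}s)^{p_j}-b_j\bigr]$ vanishes locally uniformly in $s$ as $\lambda\to 0^+$ by \eqref{H_inf}. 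Combined with continuity of $S$, this delivers the joint continuity required for both parts of the argument.
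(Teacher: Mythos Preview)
Your proposal is correct and follows essentially the same route as the paper: a contradiction/compactness argument for part~(a) producing a fixed point of $\widetilde{G}(0,\cdot)$ on a forbidden sphere, and homotopy invariance of the Leray--Schauder degree for part~(b). Your additional paragraph justifying the joint continuity of $\widetilde{G}$ at $\lambda=0$ via the decomposition of $\widetilde{f}_i$ is a useful elaboration that the paper leaves implicit.
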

\begin{proof}
Part (a): \\
Suppose not. Then, there exists a sequence $\{\lambda_n \} \in [0, +\infty)$ with $\lambda_n \to 0$ and corresponding $\{(w_{1,n},w_{2,n})\} \in (C(\p\Om))^2$  such that 
$\widetilde{G}(\lambda_n,(w_{1,n},w_{2,n}))=(w_{1,n},w_{2,n})$, and $\|(w_{1,n},w_{2,n})\|_{(C({\p\Omega}))^2}$ $=r$ (or $\|(w_{1,n},w_{2,n})\|_{(C({\p\Omega}))^2}=R$). Using the facts that  $\|(w_{1,n},w_{2,n})\|_{(C({\p\Omega}))^2}=r$ (or $R$)
and $\widetilde{G}$ is a compact operator,  $\widetilde{G}(\lambda_n,(w_{1,n},w_{2,n}))= (w_{1,n},w_{2,n}) $ is convergent up to a subsequence. Therefore, $(\lambda_n,(w_{1,n},w_{2,n})) \to (0,(w_1,w_2))$ as $n \to \infty$, where $(w_1,w_2)\in (C({\p\Omega}))^2$, and the continuity of $\widetilde{G}$ implies $\widetilde{G}(0,(w_1,w_2))=(w_1,w_2)$. That is, $(w_1, w_2)$ is a weak solution to the limiting problem \eqref{pde_limtng} with $\|(w_1,w_2)\|_{(C({\p\Omega}))^2}=r$ (or $\|(w_1,w_2)\|_{(C({\p\Omega}))^2}=R$), which contradicts Lemma \ref{deg_new}.
\\ 

\noindent
Part (b):\\
Part (a) implies that $\deg (I-\widetilde{G}(\lambda,\cdot),B_R \backslash \overline{B_r},0)$ is well defined for all $\lambda \in [0,\widetilde{\lambda}]$. Now using $\lambda \in [0,\widetilde{\lambda}]$ as the homotopy parameter, and using Lemma \ref{deg_new}, we get $\deg (I-\widetilde{G}(\lambda,\cdot),B_R \backslash \overline{B_r},0)=\deg (I-\widetilde{G}(0,\cdot),B_R \backslash \overline{B_r},0)=-1\,.$ This completes the proof.
\end{proof}

Now, we complete the proof of Theorem~\ref{thm:local}. 
By Lemma~\ref{rscld}, the rescaled problem \eqref{pde_rscld} has a non trivial solution $(w_{1},w_{2}) \in \big(C(\overline{\Omega})\big)^2$ for all $\lambda \in [0, \widetilde{\lambda}]$ satisfying $r<\|(w_{1},w_{2})\|_{(C(\partial\Omega))^2}<R$. Since $f_i$\,'s are  nonnegative and satisfy $\eqref{H_inf}$, and so do the $\widetilde{f}_i$\,'s, therefore, $w_i>0$ in $\overline{\Omega}$ by Proposition~\ref{max_2} for $i=1,2$. 
Using Lemma~\ref{rscld}, it follows from  \cite[Prop.~2.3]{deF-Lion_Nus_1982} 
that the rescaled problem \eqref{pde_rscld} has a connected component $\mathscr{D}$ of positive weak solutions along which $\lambda$ takes all the values in $[0,\widetilde{\lambda}]$. From the rescaling \eqref{eq:re-scaling}, it follows that for $\lambda >0$, $(\la,(u_{1},u_{2}))$ is a solution to \eqref{pde}. Also, using $\|(w_1,w_2)\|_{(C(\partial\Omega))^2}>r>0$ and  $\theta_1,\theta_2>0$, we see that $\|(u_{1},u_{2})\|_{(C(\partial\Omega))^2} \to \infty$ as $\lambda \to 0^+$. Additionally, since  $\|(u_{1},u_{2})\|_{C(\partial\Omega)} \le \|(u_{1},u_{2})\|_{C(\overline{\Omega})}$, we get 

$
\|(u_{1},u_{2})\|_{(C(\overline{\Omega})^2}\to\infty$ as $\lambda \to 0^+$.
Moreover, there exists a connected component $\mathscr{C}^+$ of positive weak solutions of \eqref{pde} bifurcating from infinity at $\lambda_{\infty}=0$ such that the projection of $\mathscr{C}^+ \subset \Sigma$ on the parameter space is $(0, \widetilde{\lambda}]$. This completes the proof. \qed

\section{Global Bifurcation: Proof of Theorem \ref{thm-2}}
\label{sec:global}
In this section, we will use bifurcation theory to prove our result. For the sake of notational simplicity, first we rewrite the problem \eqref{pde} in matrix form, which takes advantage of the hypothesis \eqref{H0} near zero as follows:
\begin{align}\label{eq:vector-form}
&\begin{pmatrix}
-\Delta+1 & 0\\
0  &-\Delta+1
\end{pmatrix}
\begin{pmatrix} u_1\\u_2\end{pmatrix}=\begin{pmatrix} 0\\0\end{pmatrix} \quad \text{ in }\Omega\,,\\
&\begin{pmatrix} \frac{\partial u_1}{\partial \eta}\\
\frac{\partial u_2}{\partial \eta}\end{pmatrix}=\lambda A \begin{pmatrix} u_1\\u_2\end{pmatrix} +\lambda\begin{pmatrix} \mathcal{R}_1(u_2)\\\mathcal{R}_2(u_1)\end{pmatrix} \quad \text{ on }\p\Omega\,,\label{eq:vector-form-boundary}
\end{align}
where $A$ is the $2\times 2$ matrix
\begin{align}\label{eq:matrix-A}
A=\begin{pmatrix}0&f_1^{\prime}(0)\\
f_2^{\prime}(0)&0\end{pmatrix}.
\end{align}
Clearly, the  eigenvalues of $A$ are $\{\s,-\s\}$, where 
\begin{equation}\label{s+-}
\sigma:=\sqrt{f_1^{\prime}(0)f_2^{\prime}(0)}\,.  \end{equation}
Therefore, there exists an invertible matrix $P$ such that  
\begin{equation}\label{eq:diagonal}
P^{-1}AP=J:=
\begin{pmatrix}
   \sigma&0\\
   0&-\s 
   \end{pmatrix},
\end{equation}
where 
\begin{equation*}%\label{eq:matrix}
P=\begin{pmatrix}
\dfrac{1}{1+\z}& \dfrac{1}{1+\z}\\[.3cm]
\dfrac{\z}{1+\z}&\dfrac{-\z}{1+\z}
\end{pmatrix},
\quad\quad
P^{-1}=\begin{pmatrix}
\dfrac{1+\z}{2}& \dfrac{1+\z}{2\z}\\[.3cm]
\dfrac{1+\z}{2}& -\dfrac{1+\z}{2\z}
\end{pmatrix}\,,
\end{equation*}
with  $\z:=\sqrt{\frac{f_2^{\prime}(0)}{f_1^{\prime}(0)}}$. 
Note that $J$ is the Jordan canonical form of the matrix $A$.
Now, multiplying \eqref{eq:vector-form} and \eqref{eq:vector-form-boundary} by $P^{-1}$, and denoting  
\begin{equation}
\label{relation:w:u}
\begin{pmatrix} w_1\\w_2\end{pmatrix}:= P^{-1}\begin{pmatrix} u_1\\u_2\end{pmatrix}
\end{equation}
we obtain the  following system that is equivalent to \eqref{eq:vector-form}-\eqref{eq:vector-form-boundary}.
\begin{align}\label{eq:reduction}
&\begin{pmatrix}
-\Delta+1 & 0\\
0 & -\Delta+1
\end{pmatrix}
\begin{pmatrix} w_1\\w_2\end{pmatrix}=\begin{pmatrix} 0\\0\end{pmatrix} \quad \text{ in }\Omega\,,\\
& \begin{pmatrix} \dfrac{\partial w_1}{\partial \eta}\\ 
\dfrac{\partial w_2}{\partial \eta}\end{pmatrix}
=\la J \begin{pmatrix} w_1\\w_2\end{pmatrix}+ \la \begin{pmatrix}\mathcal{R}^*_1(w_1,w_2) \\ \mathcal{R}^*_2(w_1,w_2) \end{pmatrix}\quad \text{ on }\p\Omega\,,\label{eq:reduction-boundary}
\end{align}
where
\begin{equation}\label{R*}
\begin{pmatrix}\mathcal{R}^*_1(w_1,w_2) \\ \mathcal{R}^*_2(w_1,w_2) \end{pmatrix}:= 
P^{-1}\begin{pmatrix} \mathcal{R}_1(u_2)\\
\mathcal{R}_2(u_1)\end{pmatrix}
=\frac{1+\z}{2}\begin{pmatrix} \mathcal{R}_1(u_2)+\frac{1}{\z}\mathcal{R}_2(u_1)\\ \mathcal{R}_1(u_2)-\frac{1}{\z}\mathcal{R}_2(u_1)\end{pmatrix}\,.
\end{equation}
In the proposition below we establish that $(\mu_0, (0,0))$ is a bifurcation point from the trivial solution.
\begin{prop}\label{prop:limit}
Suppose that $f_1$ and $f_2$ satisfy \eqref{H0}. Let $\{\lambda_n\}$ be a convergent sequence of real numbers and $\{(u_{1,n}, u_{2,n})\}$ be the corresponding sequence of positive solutions of \eqref{pde} satisfying $\|(u_{1,n}, u_{2,n})\|\rightarrow 0$ as $n\rightarrow \infty$. Then
\begin{equation}\label{bif:pt}
  \lambda_n\rightarrow \mu_0 \qq{with} \mu_0 \text{ is as defined in } \eqref{def:mu0}\,,
\end{equation}
 and, up to a subsequence,
 \begin{align}\label{phi1}
 \frac{u_{1,n}}{\|(u_{1,n}, u_{2,n})\|} &\rightarrow \phi_1:=\frac{\varphi_1}{1+\sqrt{f_2^{\prime}(0)/f_1^{\prime}(0)}},\\
 \frac{u_{2,n}}{\|(u_{1,n}, u_{2,n})\|}&\rightarrow \phi_2:=\sqrt{f_2^{\prime}(0)/f_1^{\prime}(0)}\ \phi_1=\frac{\varphi_1}{1+\sqrt{f_1^{\prime}(0)/f_2^{\prime}(0)}} \label{phi2},
 \end{align}
 in $C^{\beta}(\overline{\Omega})$, 
where $\mu_1$, $\varphi_1$ are the first Steklov eigenvalue and the corresponding eigenfunction, respectively,  defined in \eqref{steklov}. 
\end{prop}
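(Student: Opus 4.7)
The plan is to normalize the solutions, pass to a limit by compactness, and identify the limit via the diagonalization \eqref{eq:reduction}--\eqref{eq:diagonal} combined with the positivity of Steklov eigenvalues.

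First I would set $t_n := \|(u_{1,n},u_{2,n})\|_{C(\overline{\Omega})^2} \to 0^+$ and $v_{i,n} := u_{i,n}/t_n$, so that $\|(v_{1,n},v_{2,n})\|_{C(\overline{\Omega})^2}=1$. Dividing the equations of \eqref{pde} by $t_n$ and writing $f_j(s)=f_j'(0)s+\mathcal{R}_j(s)$ as in \eqref{H0}, with $|\mathcal{R}_j(t_n s)|\le C\, t_n^{\nu_j}|s|^{\nu_j}$, each $v_{i,n}$ satisfies a linear harmonic equation with boundary data $\lambda_n f_j'(0)\, v_{j,n}+o(1)$ which is uniformly bounded in $C(\partial\Omega)$. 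The compactness of the Neumann-to-Dirichlet operator $S$ from \eqref{D:to:N} (with $q$ large) together with the $C^{1,\alpha}$ bootstrap of Theorem~\ref{th:bootstrap} then gives precompactness of $\{(v_{1,n},v_{2,n})\}$ in $C^{\beta}(\overline{\Omega})^2$. Along a subsequence $v_{i,n}\to v_i$, with $v_i\ge 0$, $\|v_1\|_\infty+\|v_2\|_\infty=1$, and $\lambda_n\to\lambda_*$, so $(v_1,v_2)$ classically solves
\begin{equation*}
-\Delta v_i+v_i=0 \text{ in }\Omega, \qquad \frac{\partial v_1}{\partial \eta}=\lambda_* f_1'(0)\, v_2, \quad \frac{\partial v_2}{\partial \eta}=\lambda_* f_2'(0)\, v_1 \text{ on }\partial\Omega.
\end{equation*}

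Next, mirroring the change of variables \eqref{eq:reduction}, I would introduce $\psi_\pm := \pm\zeta v_1 + v_2$, which are (up to a positive constant) the components of $P^{-1}(v_1,v_2)^\top$. A direct calculation using the boundary system above and $\sigma=\sqrt{f_1'(0) f_2'(0)}$ from \eqref{s+-} gives
\begin{equation*}
-\Delta \psi_\pm+\psi_\pm=0 \text{ in }\Omega, \qquad \frac{\partial \psi_\pm}{\partial \eta}=\pm\lambda_*\sigma\, \psi_\pm \text{ on }\partial\Omega.
\end{equation*}
Since \eqref{steklov} admits only positive eigenvalues (test any eigenfunction with itself), the sign $-\lambda_*\sigma\le 0$ forces $\psi_-\equiv 0$, i.e. $v_2=\zeta v_1$. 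On the other hand $\psi_+ = \zeta v_1+v_2\ge 0$ is nontrivial (otherwise both $v_i$ would vanish, contradicting $\|(v_1,v_2)\|=1$), so it is a sign-definite Steklov eigenfunction with eigenvalue $\lambda_*\sigma$. The characterization of $\mu_1$ as the only Steklov eigenvalue admitting a positive eigenfunction then yields $\lambda_*\sigma=\mu_1$, hence $\lambda_*=\mu_0$, and $\psi_+=c\,\varphi_1$ for some $c>0$.

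Combining $v_2=\zeta v_1$ with $\psi_+=2\zeta v_1=c\,\varphi_1$ gives $v_1=(c/(2\zeta))\varphi_1$, and the normalization $(1+\zeta)\|v_1\|_\infty=1$ (using $\|\varphi_1\|_\infty=1$) forces $c=2\zeta/(1+\zeta)$, producing $v_1=\varphi_1/(1+\zeta)=\phi_1$ and $v_2=\zeta\varphi_1/(1+\zeta)=\phi_2$. Uniqueness of the limit then gives \eqref{phi1}--\eqref{phi2} along the whole sequence, together with $\lambda_n\to \mu_0$. The main obstacle in this argument is the compactness step: because the boundary data couples $v_{1,n}$ to $v_{2,n}$, one must carefully propagate uniform bounds through the two traces, which sit a priori in different Lebesgue spaces, and it is precisely this coupling that makes the bootstrap in Theorem~\ref{th:bootstrap} essential. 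Once the linear limit system is justified, the identification of $(\lambda_*,v_1,v_2)$ is the linear-algebraic observation encoded in the matrix $P$ of \eqref{eq:diagonal}, supplemented by the sign constraint that every Steklov eigenvalue is strictly positive.
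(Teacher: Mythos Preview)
Your proposal is correct and follows essentially the same approach as the paper: normalize, use the linear solution estimate \eqref{T-bounded} to extract a $C^\beta$-convergent subsequence, pass to the limiting linearized boundary system, decouple via your combinations $\psi_\pm$ (equivalently, the paper's $P^{-1}$ transformation), and identify the principal Steklov eigenpair through sign considerations. One minor over-complication: the compactness step does not actually require the bootstrap of Theorem~\ref{th:bootstrap}, since the normalized boundary data $\lambda_n f_j'(0)v_{j,n}+o(1)$ is already uniformly bounded in $C(\partial\Omega)$, and \eqref{T-bounded} with any $q>N-1$ yields the needed $C^\alpha$ bound directly.
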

\begin{proof}
Suppose that $\lambda_n\rightarrow\underline{\lambda}$ for some $\underline{\lambda}\in\mathbb{R}$ and define
\[
     (v_{1,n},v_{2,n}):=\left(\dfrac{u_{1,n}}{\|(u_{1,n}, u_{2,n})\|}\,, \dfrac{u_{2,n}}{\|(u_{1,n}, u_{2,n})\|}\right).
\]
Then $(v_{1,n},v_{2,n})$ is a weak solution to the following problem
\begin{equation}
\label{pde-3}
\left.
\begin{array}{rclll}
-\De v_{1,n}+v_{1,n} & = 0 \ \mbox{ in}\ \Omega\,,&
&\frac{\partial v_{1,n}}{\partial \eta}&= \lambda_n\left(f_1^{\prime}(0)v_{2,n}+\dfrac{\mathcal{R}_1(u_{2,n})}{\|(u_{1,n},u_{2,n})\|}\right)\ \mbox { on}\ \partial\Omega\,;\\
-\Delta v_{2,n} +v_{2,n} &=  0 \ \mbox{ in}\ \Omega\,, &
&\frac{\partial v_{2,n}}{\partial \eta}&=\lambda_n\left(f_2^{\prime}(0)v_{1,n}+\dfrac{\mathcal{R}_2(u_{1,n})}{\|(u_{1,n},u_{2,n})\|}\right) \ \mbox{ on}\ 
\partial\Omega\,,
\end{array}
\right\} 
\end{equation}
where $\mathcal{R}_i$ are the corresponding remainders of $f_i$ for $i=1,2$ given in \eqref{H0}. Since $\|u_{i,n}\|_{C(\overline{\Omega})}  \le \|(u_{1,n},u_{2,n})\|_{(C(\overline{\Omega}))^2}$ for $i=1, 2$, 
we have
\begin{equation}\label{eq:reminders}
\dfrac{|\mathcal{R}_1(u_{2,n})|}{\|(u_{1,n},u_{2,n})\|}\rightarrow 0, \quad \dfrac{|\mathcal{R}_2(u_{1,n})|}{\|(u_{1,n},u_{2,n})\|}\rightarrow 0, \quad \mbox{as}\quad n\rightarrow \infty
\end{equation}
which implies that the right-hand side of \eqref{pde-3} is bounded in $\big(C(\overline{\Omega})\big)^2$. 

On one hand, by \eqref{T-bounded} we obtain that $(v_{1,n},v_{2,n})\in (W^{1,s}(\Omega))^2$ for $s>1$. In particular, $(v_{1,n},v_{2,n})\in (W^{1,m}(\Omega))^2$ for $m>N$. By the Sobolev embedding theorem, $(W^{1,m}(\Omega))^2\hookrightarrow (C ^{\alpha}(\overline{\Omega}))^2$ for $\alpha>1-N/m$. Moreover,  $(C^{\alpha}(\overline{\Omega}))^2\hookrightarrow (C ^{\beta}(\overline{\Omega}))^2$ for $0<\beta<\alpha$ (compactly embedded). Consequently, up to a subsequence, $(v_{1,n},v_{2,n})\rightarrow (\phi_1,\phi_2)$ in $(C ^{\beta}(\overline{\Omega}))^2$ and $\phi_1, \phi_2\geq 0$. Further, since $\|(v_{1,n},v_{2,n})\|_{(C(\overline{\Omega}))^2}=1$, we obtain that $\|(\phi_1,\phi_2)\|_{(C(\overline{\Omega}))^2}=1$.

On the other hand, in particular, $(v_{1,n},v_{2,n})\in (H^1(\Omega))^2$ and $\{(v_{1,n},v_{2,n})\}$ is uniformly bounded. Therefore, due to the fact that the product space $(H^1(\Omega))^2$ is reflexive,  $\{(v_{1,n},v_{2,n})\}$ has a weakly convergent subsequence, namely $(v_{1,n},v_{2,n})\rightharpoonup (v_1,v_2)$ in $(H^1(\Omega))^2$ which in fact converges strongly $(v_{1,n},v_{2,n})\to (v_1,v_2)$ in $(L^2(\Omega))^2$. Observe that, the weak formulation of \eqref{pde-3} can be represented as follows:
\begin{equation}
	\label{weak-limit}
	\begin{array}{rcll}
\int_{\Omega} \nabla v_{1,n}\nabla \psi_1 +\int_{\Omega} v_{1,n}\psi_1&=&\lambda_n\scaleint{7ex}_{\hspace{-.3cm}\p\Om} \left(f_1^{\prime}(0)v_{2,n}+\dfrac{\mathcal{R}_1(u_{2,n})}{\|(u_{1,n},u_{2,n})\|}\right)\psi_1,\\[.5cm]
\int_{\Omega} \nabla v_{2,n}\nabla \psi_2 +\int_{\Omega} v_{2,n}\psi_2&=&\lambda_n\scaleint{7ex}_{\hspace{-.3cm}\p\Om} \left(f_2^{\prime}(0)v_{1,n}+\dfrac{\mathcal{R}_2(u_{1,n})}{\|(u_{1,n},u_{2,n})\|}\right)\psi_2,	\end{array}
\end{equation}
for all $\psi_1, \psi_2 \in H^1(\Omega)$.
Now, using the weak convergence $(v_{1,n},v_{2,n})\rightharpoonup (v_1,v_2)$ in $(H^1(\Omega))^2$ as $n \to \infty$, on the LHS of \eqref{weak-limit}, we get
\begin{equation}
\label{eq:limit-1}
\begin{array}{rcll}
\lim_{n\rightarrow \infty}\int_{\Omega} \nabla v_{1,n}\nabla \psi_1 +\int_{\Omega} v_{1,n}\psi_1&=\int_{\Omega} \nabla v_1\nabla \psi_1 +\int_{\Omega} v_1\psi_1,\\
\lim_{n\rightarrow \infty}\int_{\Omega} \nabla v_{2,n}\nabla \psi_2 +\int_{\Omega} v_{2,n}\psi_2&=\int_{\Omega} \nabla v_2\nabla \psi_2 +\int_{\Omega} v_2\psi_2,
\end{array}
\end{equation}
for all $\psi_1, \psi_2 \in H^1(\Omega)$. Next, as $n \to \infty$, on the RHS of \eqref{weak-limit} we will use the Lebesgue dominated convergence theorem. Indeed, since $H^1(\Omega)\hookrightarrow L^2(\partial\Omega)$, we have $v_{i,n}\rightarrow v_i$ $i=1,2$ in $L^2(\partial\Omega)$, and from \eqref{eq:reminders} we get
\begin{equation}
\label{eq:limit-2}
\begin{array}{rcll}
    \lim_{n\rightarrow \infty}\lambda_n\scaleint{7ex}_{\hspace{-.3cm}\p\Om} \left(f_1^{\prime}(0)v_{2,n}+\dfrac{\mathcal{R}_1(u_{2,n})}{\|(u_{1,n},u_{2,n})\|}\right)\psi_1&=\underline{\lambda} \int_{\partial\Omega} f_1^{\prime}(0)v_2 \, \psi_1\\[.5cm]
    \lim_{n\rightarrow \infty}\lambda_n\scaleint{7ex}_{\hspace{-.3cm}\p\Om} \left(f_2^{\prime}(0)v_{1,n}+\dfrac{\mathcal{R}_2(u_{1,n})}{\|(u_{1,n},u_{2,n})\|}\right)\psi_2&=\underline{\lambda} \int_{\partial\Omega} f_2^{\prime}(0)v_1\psi_2\,.
\end{array}
\end{equation}
Therefore, \eqref{eq:limit-1} and \eqref{eq:limit-2} together imply that $(v_1,v_2)$ is a weak solution to the following problem
\begin{equation*}
%\label{eq:antidiagonal-system}
 \left.
\begin{array}{rclll}
-\De v_{1}+v_{1} & = 0 \quad \mbox{in}\quad \Omega\,,&
&\frac{\partial v_{1}}{\partial \eta}&= \underline{\lambda}f_1'(0)v_2
\quad \mbox {on}\quad \partial\Omega\,;\\
-\Delta v_{2} +v_{2} &=  0 \quad \mbox{in}\quad \Omega\,, &
&\frac{\partial v_{2}}{\partial \eta}&=\underline{\lambda} f_2'(0)v_1 \quad \mbox{on}\quad \partial\Omega\,.
\end{array}
\right\} 
\end{equation*}

Moreover, 
\begin{equation}\label{z:w}
\begin{pmatrix}z_1\\z_2\end{pmatrix}=P^{-1}\begin{pmatrix}v_1\\v_2\end{pmatrix}
\end{equation}
satisfies 
\begin{align}
-\De z_{1}+z_{1} = 0 \quad \mbox{in}\quad \Omega\,,\qquad 
&\frac{\partial z_{1}}{\partial \eta}= \underline{\lambda}\sigma z_1
\quad \mbox {on}\quad \partial\Omega\,; \label{evp:1}\\
-\Delta z_{2} +z_{2} =  0 \quad \mbox{in}\quad \Omega\,, \qquad
&\frac{\partial z_{2}}{\partial \eta}+\underline{\lambda} \s z_2=0 \quad \mbox{on}\quad \partial\Omega\, \label{evp:2}
\end{align}
where $P$ is the invertible matrix associated with the Jordan canonical form presented in \eqref{eq:diagonal}.
Clearly, Problem \eqref{evp:2} is a homogeneous Robin boundary value problem that admits only the trivial solution, which implies $z_2 \equiv 0$, and the Steklov eigenvalue problem \eqref{evp:1} must satisfy $\underline{\lambda}=\mu_1/\sigma$ and $z_1=c\varphi_1$ for some $c \in \R$. Hence, $(z_1,z_2)=(c\varphi_1,0)$. Now, from \eqref{z:w}, we have
\begin{equation}
\label{eq:vi:phi}
\begin{pmatrix} v_1\\v_2\end{pmatrix}=  P\begin{pmatrix}
       \vf_1\\ 0
   \end{pmatrix}=\dfrac{1}{1+\z} \begin{pmatrix}1\\ \z\end{pmatrix} c\varphi_1, 
\qquad \mbox{ and }\quad  \underline{\la}=\mu_1/\s.  
\end{equation}
Next, note that $(v_1,v_2)=(\phi_1,\phi_2)$ and $(v_{1,n},v_{2,n})\to (v_1,v_2)$ in $(L^2(\Om))^2$, which implies $\|(v_{1,n},v_{2,n})-(v_1,v_2)\|_{(L^2(\Om))^2}\to 0.$ Additionally, due to $(v_{1,n},v_{2,n})\to (\phi_1,\phi_2)$ in $(C(\Omb))^2$, we immediately deduce that $\|(v_{1,n},v_{2,n})-(\phi_1,\phi_2)\|_{(L^2(\Om))^2}\le |\Om|\, \|(v_{1,n},v_{2,n})-(\phi_1,\phi_2)\|_{(C(\overline{\Om}))^2}\to 0$ and the uniqueness of the limit in $(L^2(\Om))^2$ implies that $(\phi_1,\phi_2)=(v_1,v_2)$ with $\|(v_1,v_2)\|_{(C(\overline{\Om}))^2}$ $=1$. Thus we have, $\|z_1\|_{C(\overline{\Omega})}=1$, 
and by the definition of the norm (see \eqref{norm}) and \eqref{eq:vi:phi}, we obtain $c=1$, and 
\begin{equation}
\label{z1:z2}
\begin{pmatrix}
\dfrac{u_{1,n}}{\|(u_{1,n}, u_{2,n})\|}\\ 
\dfrac{u_{2,n}}{\|(u_{1,n}, u_{2,n})\|}
\end{pmatrix}\to P\begin{pmatrix}
       \vf_1\\ 0
   \end{pmatrix}= \dfrac{1}{1+\z} \begin{pmatrix}1\\ \z\end{pmatrix} \varphi_1 =
   \begin{pmatrix} \phi_1\\
   \phi_2\\
   \end{pmatrix}
   \qq{as}n\rightarrow \infty \,.
\end{equation}
This completes the proof.
\end{proof}
The next theorem is a version of Crandall and Rabinowitz's local bifurcation result and of Rabinowitz's global bifurcation result, see \cite{C-R}  and \cite{R71}, demonstrating the fact that the bifurcation point $(\mu_0, (0,0))$ is in fact unique and the connected component bifurcating from the trivial solution at the point $(\mu_0, (0,0))$ is unbounded.

\begin{thm}
\label{thm:bif:point}
If  $f_i \in C^1([0,\infty))$ satisfy $\eqref{H0}$, then there exists a connected component $\mathscr{C}^+ \subset \Sigma$ of positive weak solution to \eqref{pde} emanating from the trivial solution at $(\mu_0,(0,0))\in \mathbb{R}\times \big(C(\overline{\Omega})\big)^2$ with $\mu_0$ defined in \eqref{def:mu0}. Moreover, $\mathscr{C}^+$ is unbounded in $\mathbb{R} \times \big(C(\overline{\Omega})\big)^2$.
\end{thm}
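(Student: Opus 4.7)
The plan is to reformulate \eqref{pde} as a nonlinear eigenvalue problem of the form $(I-\lambda L - H(\lambda,\cdot))U = 0$ in the space $X := (C(\overline{\Omega}))^2$ (equipped with the norm \eqref{norm}), where $L$ is a compact linear operator and $H$ is a higher-order compact perturbation, and then invoke the Crandall--Rabinowitz local bifurcation theorem together with Rabinowitz's global alternative. Concretely, using the solution operator $T$ of the linear problem \eqref{linear} and the trace $\Gamma$, write $U=(u_1,u_2)$ as
\[
U = \lambda\,L(U) + \lambda\,\mathscr{H}(U),\qquad L(U):=\begin{pmatrix} T\bigl(f_1'(0)\Gamma u_2\bigr)\\ T\bigl(f_2'(0)\Gamma u_1\bigr)\end{pmatrix},\qquad \mathscr{H}(U):=\begin{pmatrix} T(\mathcal{R}_1(\Gamma u_2))\\ T(\mathcal{R}_2(\Gamma u_1))\end{pmatrix}.
\]
The operator $L:X\to X$ is linear and compact (by the trace compactness \eqref{eq:trace-compact} and the linear regularity \eqref{T-bounded}), while $\mathscr{H}$ is compact with $\|\mathscr{H}(U)\|_X=o(\|U\|_X)$ as $\|U\|_X\to 0$, thanks to \eqref{H0}.

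Next I would verify the Crandall--Rabinowitz hypotheses at $\lambda_0=\mu_0$. It is convenient to pass to the diagonalized variables $W=P^{-1}U$ of \eqref{relation:w:u}, in which the linearization becomes the decoupled pair \eqref{evp:1}--\eqref{evp:2}. Since the Robin problem \eqref{evp:2} has only the trivial solution for any $\lambda\ge 0$, while \eqref{evp:1} reduces to the Steklov problem with first simple eigenvalue at $\lambda\sigma=\mu_1$, the kernel $N(I-\mu_0 L)$ is spanned by $\Phi:=(\phi_1,\phi_2)=\tfrac{\varphi_1}{1+\zeta}(1,\zeta)$ coming from \eqref{eq:vi:phi}--\eqref{z1:z2}, so $\dim N(I-\mu_0 L)=1$. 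Fredholm theory for compact perturbations of the identity gives $\mathrm{codim}\,R(I-\mu_0 L)=1$. For transversality, I would check that $L\Phi\notin R(I-\mu_0 L)$: in $W$-coordinates this reduces to verifying that the eigenfunction $\varphi_1$ of the Steklov operator is not in the range of $I-(\mu_0/\mu_1)^{-1}S_\sigma$ (the Neumann-to-Dirichlet operator associated with \eqref{evp:1}), which is standard since $\mu_1$ is a simple eigenvalue of $S_\sigma$. This gives the Crandall--Rabinowitz condition and produces a local $C^0$-curve of nontrivial solutions $(\lambda(s),U(s))$ through $(\mu_0,0)$ with $U(s)=s\Phi+o(s)$.

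Since $\phi_1,\phi_2>0$ on $\overline{\Omega}$, the local branch consists of \emph{positive} solutions for $s>0$ small, so a connected component $\mathscr{C}^+$ of $\Sigma$ bifurcating from $(\mu_0,(0,0))$ is obtained. To upgrade to a global statement I would appeal to Rabinowitz's global bifurcation theorem \cite{R71} for $U=\lambda L U+\lambda \mathscr{H}(U)$: the maximal connected component $\mathscr{C}$ containing $(\mu_0,0)$ either (i) is unbounded in $\mathbb{R}\times X$, or (ii) meets the trivial line $\{(\lambda,(0,0))\}$ at some $(\lambda_*,(0,0))$ with $\lambda_*\ne\mu_0$. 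Proposition \ref{prop:limit} rules out (ii) for the positive-solution component $\mathscr{C}^+$: any sequence of positive solutions with $\|U_n\|\to 0$ forces $\lambda_n\to\mu_0$, so $\mathscr{C}^+$ cannot reach the trivial line at a second point. Positivity is preserved along $\mathscr{C}^+$: along a continuous path of nontrivial nonnegative solutions, each $u_i$ satisfies $-\Delta u_i+u_i=0$ in $\Omega$ with $\partial_\eta u_i=\lambda f_i(u_j)\ge 0$, and Proposition \ref{max_2} together with Theorem \ref{th:bootstrap} guarantees strict positivity, so $\mathscr{C}^+$ cannot leave the positive cone without first hitting the trivial solution, which has already been excluded. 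Hence alternative (i) must hold and $\mathscr{C}^+$ is unbounded in $\mathbb{R}\times X$.

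The main obstacle I expect is the transversality verification for the coupled system: a direct computation on the original $(u_1,u_2)$ equation mixes the two components, and the cleanest route is the Jordan-form reduction \eqref{eq:diagonal}--\eqref{eq:reduction-boundary}, which isolates a genuine Steklov eigenvalue problem in one coordinate and a non-resonant Robin problem in the other. Once the splitting is in place, the simplicity of $\mu_1$ for the Steklov operator and the standard characterization of $R(I-\mu_0 L)$ in terms of $L^2(\partial\Omega)$-orthogonality against $\varphi_1$ make the Crandall--Rabinowitz transversality condition routine, and the remaining global step is controlled entirely by Proposition \ref{prop:limit} and the maximum principle.
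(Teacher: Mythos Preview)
Your proposal is correct and follows essentially the same route as the paper: reformulate \eqref{pde} as a compact fixed-point equation, apply Crandall--Rabinowitz at $\mu_0$ using the Jordan-form decoupling \eqref{eq:diagonal}--\eqref{eq:reduction-boundary} to identify the one-dimensional kernel and verify transversality (the paper does this by the same mechanism---testing the first component of \eqref{pde_z1} against $\varphi_1$---which is exactly your ``$L^2(\partial\Omega)$-orthogonality against $\varphi_1$'' characterization of the range), and then invoke Rabinowitz's global alternative together with positivity and Proposition~\ref{prop:limit} to exclude a second bifurcation point from the trivial line. The only cosmetic differences are that the paper works on $(C(\partial\Omega))^2$ via the Neumann-to-Dirichlet operator $S$ rather than on $(C(\overline{\Omega}))^2$ via $T$, and it outsources the positivity-along-the-branch argument to \cite{BCDMP_nonlbc} instead of invoking Proposition~\ref{max_2} and Theorem~\ref{th:bootstrap} directly as you do.
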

\begin{proof}
Utilizing hypothesis \eqref{H0}, let us define $\mathscr{F}: \mathbb{R} \times \big(C({\p\Omega})\big)^2 \to \big(C({\p\Omega})\big)^2$ as follows. 
\begin{equation*}
%\label{op:fixed-point}
    \mathscr{F}(\lambda, (u_1, u_2))=  \begin{pmatrix} 
     u_1\\
    u_2
    \end{pmatrix} -
    \begin{pmatrix} 
    S(\lambda f_1(u_2))\\
    S(\lambda f_2 (u_1))
    \end{pmatrix}
    = \begin{pmatrix} 
     u_1\\
    u_2
    \end{pmatrix} -
    S \left( \lambda A \begin{pmatrix} 
     u_1\\
    u_2
    \end{pmatrix}+\lambda
    \begin{pmatrix} 
     \mathcal{R_1}(u_2)\\
    \mathcal{R_2}(u_1)
    \end{pmatrix}\right),
\end{equation*}
where the matrix $A$  is defined in \eqref{eq:matrix-A} and the operator $S$ is defined in \eqref{D:to:N}.
Primarily, we seek nontrivial solutions $\big(\lambda,(u_1,u_2)\big)$ of $\mathscr{F}\big(\lambda,(u_1,u_2)\big)=(0, 0)^t,$ employing the Crandall-Rabinowitz Bifurcation Theorem (see \cite[Theorem 1.7]{C-R}), where $(a, b)^t$ denotes the transpose of the vector 
$\bigg(\begin{array}{cc}
     a\\
    b
    \end{array}\bigg) .$ Differentiating $\mathscr{F}$ with respect to $(u_1, u_2)$ and evaluating it at $(\la,(u_1,u_2))=(\mu_0,(0,0))$, we get $L_0(v_1,v_2):=D_{(v_1,v_2)}\mathscr{F}(\mu_0,(0,0))(v_1,v_2)=(v_1,v_2)^t-\mu_0 S\big(A(v_1,v_2)^t\big).$ Then, by  differentiating  $D_{(u_1,u_2)}\mathscr{F}$ again w.r.t. $\lambda$, we deduce that $L_1(v_1,v_2):=D^2_{\lambda\,\, (v_1,v_2)}\mathscr{F}(\mu_0,(0,0))(v_1,v_2)=-S\big(A(v_1,v_2)^t\big).$
Observe that Proposition \ref{prop:limit} implies $ker(L_0)=span\{(\phi_{1},\phi_{2})\}$, where $(\phi_1, \phi_2)$ are defined in \eqref{phi1} and \eqref{phi2}. Hence $dim(ker(L_0))=1$. Now we demonstrate that the transversality condition holds, that is $L_1(ker(L_0)) \not\subset Range(L_0)$. 
In contrast, assume that there exists a $\gamma \in \mathbb{R}\backslash \{0\}$ with $\gamma (\phi_1,\phi_2) \in ker(L_0)$ such that $\gamma L_1 (\phi_1,\phi_2) \in Range (L_0)$. This holds if and only if there exists $(w_1,w_2)$ such that $\gamma L_1 (\phi_1,\phi_2)  =L_0(w_1,w_2)$, equivalently, $(w_1,w_2)^t  = S\Big( \mu_0A(w_1,w_2)^t-\gamma A(\phi_1,\phi_2)^t\Big)\,.$
Then $(z_1,z_2)=P^{-1}(w_1,w_2)^t$ satisfies $(z_1,z_2)^t = S\Big( \mu_0 J (z_1,z_2)^t-\gamma JP^{-1}(\phi_1,\phi_2)^t\Big)\,,$
where $J$ is given by \eqref{eq:diagonal}.
Now, the definitions of $\mu_0$ given by \eqref{bif:pt}, $\s$ given by \eqref{s+-},  and
$(\phi_1, \phi_2)$ by \eqref{phi1}--\eqref{phi2}, implies that $(z_1,z_2)$ satisfies
\begin{equation}
\label{pde_z1}
 \left.
\begin{array}{rcllll}
-\De z_{1}+z_{1} & = 0 \quad \mbox{in}\quad \Omega\,,&
&\frac{\partial z_{1}}{\partial \eta}&=\mu_1z_1-\gamma \s \varphi_1 
& \quad \mbox {on}\quad \partial\Omega\,,\\
-\Delta z_{2} +z_{2} &=  0 \quad \mbox{in}\quad \Omega\,, &
&\frac{\partial z_{2}}{\partial \eta}&=-\mu_1z_2 & \quad \mbox{on}\quad \partial\Omega\,,
\end{array}
\right\} 
\end{equation}
where $\varphi_1$ is the principal eigenfunction. 
Moreover, using the weak formulation for the first equation in \eqref{pde_z1} with the test function $\varphi_1$, we obtain 
\begin{equation*}
\mu_1\int_{\partial\Omega}z_1\varphi_1 = \int_\Omega \nabla z_1 \nabla \varphi_1+z_1\varphi_1 = \mu_1\int_{\partial \Omega} z_1\varphi_1-\gamma  
\s  \int_{\partial\Omega} \varphi_1^2\notag,
\end{equation*}
which implies
$   \gamma 
   \s\int_{\partial\Omega} \varphi_1^2=0.
$
Consequently, $\gamma=0$ since $\s\ne0$ by hypothesis \eqref{H0}, which is a contradiction. 
Hence $(\mu_0,(0,0))$ is a bifurcation point, and by Rabinowitz theorem (see \cite{R71}, Thm.~1.3) there exists a connected component $\mathscr{C}^+ \subset \Sigma\,$  of positive weak solutions of \eqref{pde} emanating from the trivial solution at $(\mu_0,(0,0))\in \mathbb{R}\times \big(C(\overline{\Omega})\big)^2$ where the branch $\mathscr{C}^+$ either meets another bifurcation point from the trivial solution, or it is unbounded in $\mathbb{R}\times \big(C(\overline{\Omega})\big)^2$. Since $f_i \geq 0$ satisfies $\eqref{H0}$ for $i=1,2$ it follows from   \cite[Lem.~2.1(iv) \& Prop.~2.5]{BCDMP_nonlbc} that the branch contains only positive solutions. From the Crandall-Rabinowitz Theorem (see \cite{C-R}), $\mathscr{C}^+$ can neither meet another bifurcation point from zero, nor can meet $(\mu_0,(0,0))$ again. Thus, the branch $\mathscr{C}^+$ 
 is unbounded in $\mathbb{R}\times \big(C(\overline{\Omega})\big)^2$.
\end{proof}

\subsection{Proof of Theorem~\ref{thm-2}}
Having established the preceding  Proposition \ref{prop:limit} and Theorem \ref{thm:bif:point}, we are now in a position to prove Theorem \ref{thm-2}. The proof proceeds in several steps, as detailed below.

\textbf{Step 1:} {\it Existence of an unbounded connected component $\mathscr{C}^+$ of positive weak solutions of \eqref{pde}.} 

By Theorem \ref{thm:bif:point}, there exists a connected component $\mathscr{C}^+$ of positive weak solutions of \eqref{pde} bifurcating from the trivial solution at the bifurcation point $\left(\mu_0,(0,0)\right)$ and that $\mathscr{C}^+$ is unbounded in $\mathbb{R}\times \big(C(\overline{\Omega})\big)^2$, where $\mu_0$ is as defined in \eqref{def:mu0}. 

\textbf{Step 2:} {\it Next, we prove nonexistence of positive solutions of \eqref{pde} for $\lambda > \frac{\mu_1}{K}$, where $K$ is given in \eqref{K}.}  

Suppose in contrast that there exists a positive solution $(u_1, u_2)$  corresponding to $\lambda > \frac{\mu_1}{K}$. Then, taking $\varphi_1>0$ as a test function in the weak formulation  \eqref{eq:weak-sol}, the first equation satisfies
\begin{align}
 0 &= 
\lambda\int_{\partial\Omega} f_1(u_2)\varphi_1-\int_{\Omega}\left[ \nabla u_1\nabla \varphi_1+u_1\varphi_1\right]
 \geq \lambda K \int_{\partial\Omega} u_2\varphi_1 - \mu_1 \int_{\partial\Omega} u_1\varphi_1 
>\mu_1\int_{\partial\Omega} (u_2-u_1)\varphi_1\,.
\label{eq_1}
\end{align}
On the other hand, using the second equation of  \eqref{eq:weak-sol}, we get 
\begin{equation*}%\label{eq_2}
    \int_{\partial\Omega} (u_2-u_1) \varphi_1> 0\,,
\end{equation*}
which contradicts \eqref{eq_1}, establishing the claim.\\

\textbf{Step 3:}
{\it We show that $\mathscr{C}^+$ contains a unique bifurcation point from infinity at $\lambda=0$.} 
\par It follows from  {\it Step~1} that $\mathscr{C}^+$ is unbounded in $\mathbb{R}\times \big(C(\overline{\Omega})\big)^2$ and  {\it Step 2} implies that $\mathscr{C}^+$ is bounded in the $\lambda$-direction.  
Hence, 
there exists a sequence  $(\lambda_n, (u_{1,n},u_{2,n})) \in \mathscr{C}^+$ such that  $\lambda_n \in \left[0, \frac{\mu_1}{K}\right]$ and $\|(u_{1,n},u_{2,n})\|\to \infty$.
By choosing a subsequence if necessary, $\lambda_n\to \widetilde{\lambda}$ and $\|(u_{1,n},u_{2,n})\|\to \infty$.  We claim that $\widetilde{\lambda}=0$.

\par Assume to the contrary that $\widetilde{\lambda}> 0$ and for $a_0>0,$ let  $[a_0,b_0]$ be  any  fixed compact interval with $\widetilde{\lambda}\in(a_0,b_0)$. By the uniform \emph{a priori} bound result,  Proposition \ref{th:Rossi}, 
for any $\lambda\in[a_0,b_0]$,   there exists a uniform constant $M=M(a_0,b_0) > 0$ such that  for every $(\lambda,(w_1,w_2))$ with $\lambda\in[a_0,b_0]$ and $(w_1,w_2)$  a positive  weak solution  of the re-scaled problem \eqref{pde_rscld},  we have $\|(w_1,w_2)\|\leq M\,.$ Now, by \eqref{eq:re-scaling} it follows that for any $\lambda>0$, $(u_1,u_2)$ is a positive weak solution to \eqref{pde} if and only if $(w_1,w_2)=(\lambda^{\theta_1}u_1,\lambda^{\theta_2}u_2)$ is a weak solution to  \eqref{pde_rscld}, where $\theta_1,\, \theta_2$ are given by \eqref{theta_eq}.
Consequently, 
\begin{equation}\label{eq:bound-norm-u}
\|(u_1,u_2)\|_{C(\overline{\Omega})^2}
\leq \max\{\lambda^{-\theta_1},\lambda^{-\theta_2}\}M\leq \max\{a_0^{-\theta_1},a_0^{-\theta_2}\}M=:M'\mbox{ for  any } \lambda \in[a_0,b_0],
\end{equation} 
which contradicts the fact that $\|(u_{1,n},u_{2,n})\|\to \infty$ as $\lambda_n\to \widetilde{\lambda}>0$. Hence, we can conclude that $\widetilde{\lambda}=0$ and $\mathscr{C}^+$ contains a unique bifurcation point from infinity at $\lambda=0$ and \eqref{unbdd:zero} holds necessarily. With this final step, proof of Theorem \ref{thm-2} is complete.

\section{Multiplicity Result: Proof of Theorem \ref{thm-3}}
\label{sec:multiplicity}
To present the multiplicity of positive solutions, we first need to determine the direction of bifurcation at the bifurcation point. This direction, either to the left or to the right, depends on the sign of $\underline{\mathcal{R}}_0$ and $\overline{\mathcal{R}}_0$ introduced in \eqref{eq:R-zero}. To this end, we begin by proving the following lemma. We then state a theorem that characterizes the bifurcation direction. Once the lemma is established, the proof of the theorem follows immediately. 

\begin{lem}\label{lemma-inf-sup}
 Suppose that $f_1, f_2\in C^1([0,\infty))$ satisfy the hypothesis $\eqref{H0}$. Let $\{(u_{1,n}, u_{2,n})\}$ be a sequence of positive weak solutions  of \eqref{pde} corresponding to the parameter $\lambda_n$ such that $\lambda_n\rightarrow \mu_0
 $ and $\|(u_{1,n}, u_{2,n})\|\rightarrow 0$, where $\mu_0$ is defined in \eqref{bif:pt}. Then, 
\begin{align*}%\label{eq:liminf-limsup}
\frac{\mu_0}{\s}\ \underline{\mathcal{R}}_0 \ \dfrac{\int_{\partial\Omega} \varphi_1^{1+\nu}}{\int_{\partial\Omega} \varphi_1^2}
&\leq \liminf_{n\rightarrow \infty} \dfrac{\mu_0-\lambda_n}{\|(u_{1,n}, u_{2,n})\|^{\nu-1}}\nonumber\\
&\leq \limsup_{n\rightarrow \infty} \dfrac{\mu_0-\lambda_n}{\|(u_{1,n}, u_{2,n})\|^{\nu-1}}\leq \frac{\mu_0}{\s}\ \overline{\mathcal{R}}_0 \ \dfrac{\int_{\partial\Omega} \varphi_1^{1+\nu}}{\int_{\partial\Omega} \varphi_1^2},
\end{align*}
where $\s,\ \underline{\mathcal{R}}_0$ and $\overline{\mathcal{R}}_0$ were defined in  \eqref{s+-} and \eqref{eq:R-zero} respectively.
\end{lem}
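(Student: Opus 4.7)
The plan is to test each equation of the weak formulation~\eqref{eq:weak-sol} against $\varphi_1$ and apply the Steklov identity
\[
\int_{\Omega}(\nabla u\,\nabla\varphi_1+u\varphi_1)=\mu_1\int_{\partial\Omega}u\varphi_1,\qquad u\in H^1(\Omega),
\]
together with the decomposition $f_i(s)=f_i'(0)s+\mathcal{R}_i(s)$ from \eqref{H0}. Writing
\[
\alpha_n:=\int_{\partial\Omega}u_{1,n}\varphi_1,\ \beta_n:=\int_{\partial\Omega}u_{2,n}\varphi_1,\ R_{1,n}:=\int_{\partial\Omega}\mathcal{R}_1(u_{2,n})\varphi_1,\ R_{2,n}:=\int_{\partial\Omega}\mathcal{R}_2(u_{1,n})\varphi_1,
\]
this produces the $2\times2$ algebraic system
$\mu_1\alpha_n=\lambda_n f_1'(0)\beta_n+\lambda_n R_{1,n}$ and $\mu_1\beta_n=\lambda_n f_2'(0)\alpha_n+\lambda_n R_{2,n}$.

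Next, I multiply by the left eigenvector $(\zeta,1)$ of the matrix $A$ in~\eqref{eq:matrix-A} associated with the eigenvalue $\sigma$; the identities $\zeta f_1'(0)=\sigma$ and $f_2'(0)=\sigma\zeta$ are immediate from the definitions. Adding the resulting linear combination collapses the system to the single scalar identity
\[
(\mu_1-\lambda_n\sigma)(\zeta\alpha_n+\beta_n)=\lambda_n(\zeta R_{1,n}+R_{2,n}),
\]
so that, setting $\tau_n:=\|(u_{1,n},u_{2,n})\|$ and using $\mu_1=\sigma\mu_0$,
\[
\frac{\mu_0-\lambda_n}{\tau_n^{\nu-1}}=\frac{\lambda_n}{\sigma}\cdot\frac{(\zeta R_{1,n}+R_{2,n})/\tau_n^{\nu}}{(\zeta\alpha_n+\beta_n)/\tau_n}.
\]
The conclusion will then follow by taking $\liminf$ and $\limsup$ of both sides.

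For the denominator, Proposition~\ref{prop:limit} yields $u_{i,n}/\tau_n\to\phi_i$ uniformly on $\overline{\Omega}$, so $(\zeta\alpha_n+\beta_n)/\tau_n\to\tfrac{2\zeta}{1+\zeta}\int_{\partial\Omega}\varphi_1^{2}$ by direct substitution of $\phi_1=\varphi_1/(1+\zeta)$ and $\phi_2=\zeta\varphi_1/(1+\zeta)$. For the numerator, I factor pointwise on $\partial\Omega$,
\[
\frac{\mathcal{R}_i(u_{j,n}(x))}{\tau_n^{\nu}}=\frac{\mathcal{R}_i(u_{j,n}(x))}{u_{j,n}(x)^{\nu}}\left(\frac{u_{j,n}(x)}{\tau_n}\right)^{\nu},\qquad j\ne i,
\]
with $u_{j,n}(x)\to0$ and $u_{j,n}/\tau_n\to\phi_j>0$ uniformly (the strict positivity of $\phi_j$ follows from $\varphi_1>0$ on $\overline{\Omega}$). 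The definitions~\eqref{eq:def-Ris} combined with $\phi_j>0$ give the pointwise envelopes
\[
\underline{\mathcal{R}}_i\,\phi_j(x)^{\nu}\le\liminf_n \frac{\mathcal{R}_i(u_{j,n}(x))}{\tau_n^{\nu}}\le\limsup_n \frac{\mathcal{R}_i(u_{j,n}(x))}{\tau_n^{\nu}}\le\overline{\mathcal{R}}_i\,\phi_j(x)^{\nu},
\]
and the growth $|\mathcal{R}_i(s)|\le Cs^{\nu_i}$ with $\nu_i\ge\nu$, together with the uniform boundedness of $u_{j,n}/\tau_n$, supplies an integrable dominator on $\partial\Omega$. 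Thus Fatou's lemma and its reverse form apply to $R_{i,n}/\tau_n^{\nu}$, yielding
\[
\underline{\mathcal{R}}_i\int_{\partial\Omega}\phi_j^{\nu}\varphi_1\le\liminf_n\frac{R_{i,n}}{\tau_n^{\nu}}\le\limsup_n\frac{R_{i,n}}{\tau_n^{\nu}}\le\overline{\mathcal{R}}_i\int_{\partial\Omega}\phi_j^{\nu}\varphi_1.
\]
Substituting $\phi_1^{\nu}=\varphi_1^{\nu}/(1+\zeta)^{\nu}$ and $\phi_2^{\nu}=\zeta^{\nu}\varphi_1^{\nu}/(1+\zeta)^{\nu}$ into the bound for $(\zeta R_{1,n}+R_{2,n})/\tau_n^{\nu}$ and dividing by the limit $\tfrac{2\zeta}{1+\zeta}\int_{\partial\Omega}\varphi_1^{2}$ of the denominator produces, after algebraic simplification with $\lambda_n\to\mu_0$, the stated inequalities expressed through $\underline{\mathcal{R}}_0,\overline{\mathcal{R}}_0$ as in \eqref{eq:R-zero} and the ratio $\int_{\partial\Omega}\varphi_1^{1+\nu}/\int_{\partial\Omega}\varphi_1^{2}$.

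The main obstacle is the passage to the limit in the remainder integrals, because the envelopes $\underline{\mathcal{R}}_i,\overline{\mathcal{R}}_i$ can have arbitrary sign and because $\nu_1,\nu_2$ may be unequal, in which case the larger-exponent remainder contributes trivial envelopes $\underline{\mathcal{R}}_i=\overline{\mathcal{R}}_i=0$. Producing a uniform integrable dominator in $n$ and justifying the interchange of $\liminf/\limsup$ with the boundary integral through the two Fatou-type lemmas requires the strict positivity $\phi_j>0$ on $\overline{\Omega}$, which lets the factor $(u_{j,n}(x)/\tau_n)^{\nu}$ be pulled outside the $\liminf/\limsup$ cleanly.
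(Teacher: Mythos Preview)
Your proof is correct and is essentially the same argument as the paper's. The paper first performs the change of variables $w=P^{-1}u$ to diagonalize the linear part and then tests the resulting $w_1$-equation against $\varphi_1$, whereas you test both original equations against $\varphi_1$ and take the linear combination with coefficients $(\zeta,1)$; since $(\zeta,1)$ is a scalar multiple of the first row of $P^{-1}$, the two routes yield the identical scalar identity, and the subsequent Fatou/reverse-Fatou step together with Proposition~\ref{prop:limit} is carried out in the same way in both.
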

\begin{proof}
It is readily seen that  using the relationship \eqref{relation:w:u}, the system \eqref{eq:reduction}-\eqref{eq:reduction-boundary}  is equivalent to the system \eqref{pde} under Hypothesis \eqref{H0}. Thereafter, by using the weak formulation of \eqref{eq:reduction}-\eqref{eq:reduction-boundary}, with $\varphi_1$ as a test function, and hypothesis $\eqref{H0}$, we get
that the first component of  the weak formulation of \eqref{eq:reduction}-\eqref{eq:reduction-boundary} 
satisfies
\begin{equation}
	\label{eq:bound-1}
	\begin{array}{rcll}
\dfrac{\mu_0-\lambda_n}{\|(u_{1,n},u_{2,n})\|^{\nu-1}}
\displaystyle\scaleint{7ex}_{\hspace{-.3cm}\p\Om} \frac{w_{1,n}}{\|(u_{1,n},u_{2,n})\|}\varphi_1
=\dfrac{\lambda_n}{\sigma} \displaystyle\scaleint{7ex}_{\hspace{-.3cm}\p\Om}\frac{\mathcal{R}_1^*(w_{1,n},w_{2,n})}{\|(u_{1,n},u_{2,n})\|^{\nu}}\varphi_1\,,
\end{array}
\end{equation}
where $\s$, $\mu_0$ and $\mathcal{R}_1^*$ are given in \eqref{s+-}, \eqref{bif:pt} and \eqref{R*}, respectively.
Applying Fatou's lemma to the right-hand side of \eqref{eq:bound-1} and applying \eqref{R*}, we obtain 
\begin{align}
\label{eq:bound-2}
& \liminf_{n\rightarrow \infty} \displaystyle\scaleint{7ex}_{\hspace{-.3cm}\p\Om}\frac{\mathcal{R}_1^*(w_{1,n},w_{2,n})}{\|(u_{1,n},u_{2,n})\|^{\nu}}\varphi_1 \notag \\
&=\frac{1+\z}{2}\liminf_{n\rightarrow \infty}\displaystyle\scaleint{7ex}_{\hspace{-.3cm}\p\Om} \left[\dfrac{\mathcal{R}_1(u_{2,n})}{u_{2,n}^{\nu}}\left(\dfrac{u_{2,n}}{\|(u_{1,n},u_{2,n})\|}\right)^{\nu}+ \dfrac{1}{\z}\dfrac{\mathcal{R}_2(u_{1,n})}{u_{1,n}^{\nu}}\left(\dfrac{u_{1,n}}{\|(u_{1,n},u_{2,n})\|}\right)^{\nu} \right] \varphi_1 \nonumber\\
&\geq \frac{1+\z}{2}\displaystyle\scaleint{7ex}_{\hspace{-.3cm}\p\Om}  \liminf_{n\rightarrow \infty} \left[\dfrac{\mathcal{R}_1(u_{2,n})}{u_{2,n}^{\nu}}\left(\dfrac{u_{2,n}}{\|(u_{1,n},u_{2,n})\|}\right)^{\nu} +\dfrac{1}{\z}\dfrac{\mathcal{R}_2(u_{1,n})}{u_{1,n}^{\nu}}\left(\dfrac{u_{1,n}}{\|(u_{1,n},u_{2,n})\|}\right)^{\nu} \right]\varphi_1  \nonumber\\
&\geq \frac{1}{2}\displaystyle\scaleint{7ex}_{\hspace{-.3cm}\p\Om}   \left[\left(\dfrac{\zeta}{1+\zeta}\right)^{\nu-1}\underline{\mathcal{R}}_1+    
    \left(\dfrac{1}{1+\zeta}\right)^{\nu-1}
     \underline{\mathcal{R}}_2\right]\, \varphi_1^{1+\nu}= \underline{\mathcal{R}}_0\int_{\p\Om}  \, \varphi_1^{1+\nu},  
\end{align}
Note that, here we applied the limit outlined in Equation \eqref{z1:z2} of Proposition \ref{prop:limit} and the definitions of $\mathcal{R_i}$'s are given \eqref{eq:def-Ris}-\eqref{eq:R-zero}.
Now, combining \eqref{eq:bound-1}, \eqref{eq:bound-2}, and \eqref{bif:pt}
we get 
\begin{equation*}
    \liminf_{n\rightarrow \infty} \dfrac{\mu_0-\lambda_n}{\|(u_{1,n}, u_{2,n})\|^{\nu-1}}\geq \frac{\mu_0}{\s}\, \underline{\mathcal{R}}_0\, \dfrac{\int_{\partial\Omega} \varphi_1^{1+\nu}}{\int_{\partial\Omega} \varphi_1^2},
\end{equation*}
Analogously, one can deduce
\begin{equation*}
    \limsup_{n\rightarrow \infty} \dfrac{\mu_0-\lambda_n}{\|(u_{1,n}, u_{2,n})\|^{\nu-1}}\leq \frac{\mu_0}{\s}\, \overline{\mathcal{R}}_0 \, \dfrac{\int_{\partial\Omega} \varphi_1^{1+\nu}}{\int_{\partial\Omega} \varphi_1^2}
\end{equation*}
Finally, $\liminf_{n\rightarrow \infty} \dfrac{\mu_0-\lambda_n}{\|(u_{1,n}, u_{2,n})\|^{\nu-1}}\nonumber
\leq \limsup_{n\rightarrow \infty} \dfrac{\mu_0-\lambda_n}{\|(u_{1,n}, u_{2,n})\|^{\nu-1}}$, follows from the definition of limit supremum and limit infimum, which completes the proof.
\end{proof}
\begin{thm}%\label{thm:LR}
\textbf{(Direction of bifurcation from the trivial solution)}\\
Assume that the nonlinearities $f_1$ and $f_2$ satisfy Hypothesis $\eqref{H0}$. Then, the following holds
\begin{enumerate}
    \item [(i)] \textbf{(Bifurcation to the left)} If $\underline{\mathcal{R}}_0>0$, then the bifurcation of positive weak solutions from the trivial solution at $\lambda=\mu_0$ is to the left.  That is,  $\lambda<\mu_0$ for $\lambda$ in a neighborhood of $\mu_0$.
    \item [(ii)] \textbf{(Bifurcation to the right)} If $\overline{\mathcal{R}}_0<0$, then the bifurcation of positive weak solutions from the trivial solution at $\lambda=\mu_0$ is to the right. That is,  $\lambda>\mu_0$ for $\lambda$ in a neighborhood of $\mu_0$.
\end{enumerate}
\end{thm}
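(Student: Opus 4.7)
The plan is to deduce this direction-of-bifurcation theorem as an almost immediate corollary of Lemma~\ref{lemma-inf-sup}, which has just been proved. The key observation is that the hypothesis of Lemma~\ref{lemma-inf-sup} is automatically met along any positive-solution sequence emanating from the bifurcation point: if $\{(\lambda_n,(u_{1,n},u_{2,n}))\}\subset \mathscr{C}^+$ with $\|(u_{1,n},u_{2,n})\|\to 0$, then Proposition~\ref{prop:limit} forces $\lambda_n\to\mu_0$, so the quotient $(\mu_0-\lambda_n)/\|(u_{1,n},u_{2,n})\|^{\nu-1}$ is precisely the quantity controlled from below by $\underline{\mathcal{R}}_0$ and from above by $\overline{\mathcal{R}}_0$ (times a strictly positive constant).

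For part (i), I would fix an arbitrary such sequence along $\mathscr{C}^+$ and apply Lemma~\ref{lemma-inf-sup}. Since $\mu_0>0$, $\sigma>0$, and $\varphi_1>0$ on $\overline{\Omega}$, the constant $\frac{\mu_0}{\sigma}\,\underline{\mathcal{R}}_0\,\frac{\int_{\partial\Omega}\varphi_1^{1+\nu}}{\int_{\partial\Omega}\varphi_1^2}$ is strictly positive when $\underline{\mathcal{R}}_0>0$. Hence
\[
\liminf_{n\to\infty}\frac{\mu_0-\lambda_n}{\|(u_{1,n},u_{2,n})\|^{\nu-1}}>0,
\]
which means that for all sufficiently large $n$ the numerator $\mu_0-\lambda_n$ is strictly positive (since the denominator is positive). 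Thus in a suitable neighborhood of $(\mu_0,(0,0))$ every positive solution on $\mathscr{C}^+$ satisfies $\lambda<\mu_0$, i.e.\ the bifurcation is to the left.

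Part (ii) is symmetric: if $\overline{\mathcal{R}}_0<0$ then the upper bound in Lemma~\ref{lemma-inf-sup} is strictly negative, so $\limsup_{n\to\infty}(\mu_0-\lambda_n)/\|(u_{1,n},u_{2,n})\|^{\nu-1}<0$, which forces $\lambda_n>\mu_0$ for all large $n$ and gives bifurcation to the right.

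There is essentially no obstacle beyond what Lemma~\ref{lemma-inf-sup} already overcomes; the mild subtlety is to phrase the conclusion uniformly over \emph{all} sequences approaching $(\mu_0,(0,0))$ along $\mathscr{C}^+$, which follows from the fact that the $\liminf$/$\limsup$ bounds in the lemma are sequence-independent. I will write the argument as two short symmetric paragraphs, one per case, invoking Proposition~\ref{prop:limit} to guarantee $\lambda_n\to\mu_0$ and Lemma~\ref{lemma-inf-sup} to pin down the sign of $\mu_0-\lambda_n$.
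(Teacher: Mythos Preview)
Your proposal is correct and takes essentially the same approach as the paper, which simply records that the theorem is an immediate consequence of Lemma~\ref{lemma-inf-sup}. The extra detail you supply---invoking Proposition~\ref{prop:limit} to secure $\lambda_n\to\mu_0$ and spelling out the sign argument for the quotient---is a reasonable elaboration of what the paper leaves implicit.
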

\begin{proof}
This proof is an immediate consequence of Lemma \ref{lemma-inf-sup}.
\end{proof}

We now proceed to prove Theorem \ref{thm-3}, which establishes the multiplicity of positive solutions. As indicated in the preceding theorem, when \( \overline{\mathcal{R}}_0 < 0 \), the branch \( \mathscr{C}^+ \) bifurcates to the right. In that scenario, we first apply degree theory to prove the existence of a first positive solution. To obtain a second solution, we then employ the sub- and supersolution method, where the trivial  solution serves as a subsolution and the previously obtained solution (via degree theory) acts as a supersolution. Finally, we also establish the existence of positive solutions both at the bifurcation point and at $\overline{\lambda}$.

\subsection{Proof of Theorem~\ref{thm-3}}%\label{sec:thm-3-proof}

To begin with, observe that, the connectedness of $\mathscr{C}^+$ proved in Theorem~\ref{thm-2} guarantees existence of a positive solution for any $\lambda \in \left(0, \mu_0\right)$. 
Additionally, the nonexistence result of positive solutions for $\lambda>\frac{\mu_1}{K}$ guarantees, $\overline{\lambda}$ as defined in \eqref{la:up} is well defined. The proof will be carried out following $3$ steps. Since the connected component $\mathscr{C}^+$ bifurcates to the right of the bifurcation point, we now show that there exist two solutions of \eqref{pde} for each $\la \in \left(\mu_0, \overline{\la }\right)$. We first fix $\la \in \left(\mu_0, \overline{\la }\right)$ and then fix $\la_0 \in (\la, \overline{\la})$.

\subsubsection{Step~1: Existence of a positive solution $(u_1 ^*,u_2^*)$ for each  $\lambda\in \left(\mu_0, \overline{\lambda}\right)$ via degree theory}

Note that, by Proposition~\ref{th:Rossi}, there exists $M>0$ such that $\|(u_1,u_2)\|_{C(\overline{\Omega})^2}\leq M$ for any solution $(u_1,u_2)$ of \eqref{pde}.
Now, let $\theta\in [0,1]$ and  for a given $(u_1,u_2)\in \big(C(\partial \Omega)\big)^2$, let $(v_1,v_2)$ be a solution to
\begin{equation}\label{eq:defintion-T}
\left.
\begin{array}{rclll}
-\De v_1+v_1 & = 0 \quad \mbox{in}\quad \Omega\,,&
&\frac{\partial v_1}{\partial \eta}&=f_{1,\theta}(u_2)
\quad \mbox {on}\quad \partial\Omega\,;\\
-\Delta v_2 +v_2 &=  0 \quad \mbox{in}\quad \Omega\,, &
&\frac{\partial v_2}{\partial \eta} &=  f_{2,\theta}(u_1)\quad \mbox {on}\quad \partial\Omega\,;
%\label{sys:v}
\end{array}
\right\} 
\end{equation}
with 
$$
f_{i,\theta}(u_j)=\theta\, \lambda f_i(u_j)+(1-\theta)(\beta u_j^+ +1) \qq{for} i\ne j\,, 
$$
where $u^+ = \text{max}\{ u,0\}$ and $\beta>\mu_1\max\left\{1, \dfrac{b}{a}, \dfrac{a}{b}\right\}$ with $a=\sqrt{f_1'(0)}$, $b=\sqrt{f_2'(0)}$.
Then the fixed point operator $T_{\theta}\colon \big(C(\partial \Omega)\big)^2\rightarrow \big(C(\partial \Omega)\big)^2$, associated to \eqref{eq:defintion-T}, is given by 
\begin{align}\label{T:te}
  T_{\theta}(u_1,u_2)&:=(S\circ F_{\theta})(u_1,u_2)=(\Ga v_1,\Ga  v_2)\,,
\end{align}
where $F_{\theta}=(f_{1,\theta}, f_{2,\theta})$. Clearly, $T_{\theta}$ is a compact operator, as discussed in Subsection~\ref{subsec:3.2} with $\tilde F$ instead of $F_{\theta}$.  Moreover, by \eqref{eq:fixed-point} one can see that a fixed point of the operator $T_1$ is basically a solution to Problem \eqref{pde}.

For each \( \theta \in [0,1] \), let \( (u_{1,\theta}, u_{2,\theta}) \) denote a fixed point of \eqref{T:te}. Proposition~\ref{th:Rossi} implies there exists uniform $M''>0$  such that 
\begin{equation}\label{M''}
\|(u_{1,\te}, u_{2,\te}) \|_{(C(\overline{\Omega}))^2}\le M''  \qq{for all}  \te\in [0,1].
\end{equation}
Additionally, Proposition~\ref{max_2} ensures that there exists some $\eps>0$  small enough such that
\begin{equation}\label{u:te:ab}
 u_{1,\te}(x)>a\e  \varphi_1(x) \qq{and}  u_{2,\te}(x)>b\e  \varphi_1(x),\quad \forall \te\in[0,1],\quad \forall x\in\Omb\,.
\end{equation}
Indeed, from \eqref{eq:defintion-T}  and \eqref{steklov}, we obtain 
\begin{align*}
   -\Delta(u_{1,\te}-a\e  \varphi_1)+(u_{1,\te}-a\e  \varphi_1)&=0 \quad \text{ in }\Omega,\\
   -\Delta(u_{2,\te}-b\e  \varphi_1)+(u_{2,\te}-b\e  \varphi_1)&=0 \quad \text{ in }\Omega\,
\end{align*}
and then, using  $\lambda_0>\frac{\mu_1}{ab}$, 
and Hypothesis $\eqref{H0}$, we obtain for all  $\theta \in[0,1]$
\begin{align*}
    \dfrac{\partial }{\partial\eta}(u_{1,\te}-a\e  \varphi_1)&=\lambda_0 f_{1,\te}(u_{2,\te})-a\e  \mu_1 \varphi_1> \lambda_0 \left(f_{1,\te}(u_{2,\te})-\e  a^2 b \varphi_1\right)> 0\quad \text{ on }\partial\Omega,
\end{align*}
\begin{align*}
    \dfrac{\partial }{\partial\eta}(u_{2,\te}-b\e  \varphi_1)&=\lambda_0 f_{2,\te}(u_{1,\te})-b\e  \mu_1 \varphi_1> \lambda_0 \left(f_{2,\te}(u_{1,\te})-\e  b^2 a \varphi_1\right)> 0 \quad \text{ on }\partial\Omega,
\end{align*}
for $\e $ sufficiently small. 
Thereafter, we define
\begin{equation*}%\label{eq:DefY}
   Y:=\{(v_1,v_2)\in \big(C(\overline{\Omega})\big)^2 \colon \|(v_1,v_2)\|< 2M'',\ v_1>
   a \e \varphi_1,\ v_2>b\e  \varphi_1 \mbox{ in }\overline{\Omega}\},
\end{equation*}
where $M''$ is as defined in \eqref{M''} with $a_0= \frac{\mu_1}{ab}$ and $b_0=\overline{\lambda}$ and 
\begin{equation}
\label{eq:DefZ}
Z:=\{(v_1,v_2)\in Y\colon v_1<u^0_{1}, \ v_2<u^0_{2} \text{ in }\overline{\Omega}\},
\end{equation}
where $(u^0_{1},u^0_{2})$ is a solution to \eqref{pde} for $\lambda=\lambda_0$, ensured by the definition of $\overline{\la}$, see \eqref{la:up}. Moreover, we can take $\e >0$  sufficiently small in order to guarantee that $(u_{1}^{0},u_{2}^{0})\in Y$.
\vspace{0.1in}

\noindent \textbf{Claim 1:} $\deg(I-T_1,Y,0)=0$. 
\vspace{0.1in}
\begin{proof}
\noindent First step towards the proof of the claim is to justify that $\deg(I-T_{\theta},Y,0)$ for any $\te\in[0,1]$ is well-defined. Observe that, if $(u_1,u_2)\in \partial Y$, then either $\|(u_1,u_2)\|_{(C(\overline{\Omega}))^2}=2M''$, or there exists a point $x_0\in\Omb$ such that either $u_1(x_0)=a\e  \varphi_1(x_0)$, or $u_2(x_0)=b\e  \varphi_1(x_0)$.  
Furthermore, observe that if $(u_{1,\te},u_{2,\te})$ is a fixed point of the equation $(u_{1,\te},u_{2,\te})=T_{\theta}(u_{1,\te},u_{2,\te})$, then by \eqref{M''} and \eqref{u:te:ab}, $(u_{1,\theta},u_{2,\theta})\not\in\p Y$ and $\deg(I-T_{\theta},Y,0)$ is independent of $\te.$
Next, we demonstrate that $(v_1,v_2)\ne T_0((v_1,v_2))$ for any $(v_1,v_2)\in Y$. For our purposes, suppose that $(v_1,v_2)= T_0(v_1,v_2)$. Then for $\te=0$, System \eqref{eq:defintion-T}  reduces to
\begin{equation}\label{eq:reduction-1}
	\left.
	\begin{array}{rclll}
-\De v_1+v_1 & = 0 \quad \mbox{in}\quad \Omega\,,&
&\frac{\partial v_1}{\partial \eta}&=\beta v_2^++1\quad \mbox {on}\quad \partial\Omega\,;\\
-\Delta v_2 +v_2 &=  0 \quad \mbox{in}\quad \Omega\,, &
&\frac{\partial u_2}{\partial \eta}&=\beta v_1^++1\quad \mbox {on}\quad \partial\Omega\,.
%\label{sys:v}
\end{array}
	\right\} 
\end{equation}
Thereafter, taking $\varphi_1>0$, the eigenfunction associated to the first Steklov eigenvalue $\mu_1$, as the test function in the weak formulation of \eqref{eq:reduction-1},   we obtain
\begin{align}\label{eq:inequality-1}
&\mu_1\int_{\partial\Omega}v_1\varphi_1 =\int_{\Omega} \nabla v_1 \nabla \varphi_1+\int_{\Omega} v_1\varphi_1=\int_{\partial\Omega} (\beta v_2^++1)\varphi_1
\end{align}
and 
\begin{align}\label{eq:inequality-2}
&\mu_1\int_{\p\Om}v_2\vf_1 =\int_{\Omega} \nabla v_2 \nabla \varphi_1+\int_{\Omega} v_2\varphi_1=\int_{\partial\Omega} (\beta v_1^++1)\varphi_1.
\end{align}
Subsequently, adding Equations \eqref{eq:inequality-1} and \eqref{eq:inequality-2}, we readily deduce, 
$\mu_1\int_{\p\Om}(v_1+v_2)\vf_1 =\int_{\p\Om} (\be (v_1^++v_2^+)+2)\vf_1$.
Moreover, due to the fact that $\mu_1>0$ and $\varphi_1>0$ on $\partial \Omega$, and that $\beta >\mu_1$, we achieve a contradiction.
Consequently, $(v_1,v_2)\ne T_0((v_1,v_2))$ for all $(v_1,v_2)\in Y$. Finally, using $\theta\in[0,1]$ as homotopy parameter, we get
\begin{equation}\label{eq:degree-1}
\deg(I-T_1,Y,0)=\deg(I-T_{\theta},Y,0)=\deg(I-T_0,Y,0)=0,
\end{equation}
which completes the proof.
\end{proof}
\vspace{0.1in}

\noindent \textbf{Claim 2:} $\deg(I-T_1,Z,0)=1$.
\vspace{0.1in}
\begin{proof}
\noindent We begin by fixing $(\psi_{1},\psi_{2})\in Z$ for any $\gamma\in[0,1]$. Then for any $(u_1,u_2)\in  Z$,
let us consider the problem
 \begin{equation}\label{eq:reduction-2}
	\left.
	\begin{array}{rclll}
-\De v_1+v_1 & = 0 \quad \mbox{in}\quad \Omega\,,&
&\frac{\partial v_1}{\partial \eta}&=\gamma\lambda f_1(u_2)+(1-\gamma)\psi_{1}\quad \mbox {on}\quad \partial\Omega\,;\\
-\Delta v_2 +v_2 &=  0 \quad \mbox{in}\quad \Omega\,, &
&\frac{\partial v_2}{\partial \eta}&=\gamma\lambda f_2(u_1)+(1-\gamma)\psi_{2}\quad \mbox {on}\quad \partial\Omega\,.
%\label{sys:v}
\end{array}
	\right\} 
\end{equation}
Note that, we can rewrite Eq.\eqref{eq:reduction-2} in the following way 
\begin{equation}\label{eq:sum-map}
 (v_1,v_2)=\left(\gamma T_1+(1-\gamma)\widetilde{T}\right)(u_1,u_2),   
\end{equation}
where $\widetilde{T}$ is a map that sends every element of $Z$ into a fixed $(\psi_{1},\psi_{2})\in Z$, that is, 
\begin{equation*}%\label{T:tilde}
\widetilde{T}(u_1,u_2)=(\psi_{1}, \psi_{2}),  \qq{for all} (u_1,u_2)\in Z. 
\end{equation*}
It is clear that when $\gamma = 1$, eq. \eqref{eq:reduction-2} is equivalent to  eq. \eqref{eq:defintion-T} with $\theta=1$. Next, we show that $\deg(I-(\gamma T_1+(1-\gamma)\widetilde{T}),Z,0)$ is well-defined and independent of $\gamma\in [0,1]$. For that, we prove that if  $(u_1,u_2) \in \overline{Z}$ (see \eqref{eq:DefZ}) then $(w_1,w_2)=T_1(u_1,u_2) \in Z$. In fact, $-\Delta w_i+w_i=0$ in $\Omega$ for $i=1,2$. Therefore using the definition of $T_1$ (see \eqref{T:te}), and the fact that $f_i$ is nondecreasing, $\la<\la_0$ and the definition of $(u_1^0, u_2^0)$, we  obtain 
\begin{equation*}
\begin{array}{rclll}
\frac{\partial w_1}{\partial \eta}&=\lambda f_1(u_2)&
\le\lambda f_1(u_{2}^{0})<\lambda_0f_1(u_{2}^{0})=\frac{\partial u_{1}^{0}}{\partial \eta},\\
\frac{\partial w_2}{\partial \eta}&=\lambda f_2(u_1)&
\le\lambda f_2(u_{1}^{0})<\lambda_0f_2(u_{1}^{0})=\frac{\partial u_{2}^{0}}{\partial \eta}.
\end{array}
\end{equation*}
Moreover,  using   $\eqref{H0}$, we obtain
\begin{equation*}
\begin{array}{rclll}
\frac{\partial w_1}{\partial \eta}&=\lambda f_1(u_2)&
>a\e\frac{\partial \vf_1}{\partial \eta},\\
\frac{\partial w_2}{\partial \eta}&=\lambda f_2(u_1)&
>b\e \frac{\partial \vf_1}{\partial \eta}.
\end{array}
\end{equation*}
The comparison principle described in Proposition \ref{max_2} implies $a\e\vf_1<w_1<u_{1}^{0}$ and $b\e\vf_1<w_2<u_{2}^{0}$, hence $T_1(u_1,u_2)\in Z$. It clearly follows from the convexity of $Z$, since $(\psi_{1},\psi_{2})\in Z$, the RHS of \eqref{eq:sum-map} belongs to $Z$ for any $(u_1, u_2) \in  \overline{Z}$ and for all $\gamma\in [0,1]$. Simultaneously, $Z$ is being an open set implies that there are no fixed points of \eqref{eq:sum-map} in $\p Z$, in other words, the degree $\deg(I-(\gamma T_1+(1-\gamma)\widetilde{T}),Z,0)$ is well-defined and independent of $\gamma\in [0,1]$.
Therefore, for $(\psi_{1},\psi_{2})\in Z$, we have
\begin{equation}\label{eq:degree-2}
    \deg(I-T_1,Z,0)=\deg(I-\widetilde{T},Z,0)=\deg(I,Z,(\psi_{1},\psi_{2}))=1.
\end{equation}
At the end, combining \eqref{eq:degree-1} and \eqref{eq:degree-2}, we get $\deg(I-T_1,Y\setminus \overline{Z},0)=-1$ and there exists a positive solution $(u_1 ^*,u_2^*)\in Y\setminus \overline{Z}$ of \eqref{pde} corresponding to $\lambda$. 
\end{proof}

\subsubsection{Step~2: Existence of a second positive solution for each  $\lambda\in \left(\mu_0, \overline{\la}\right)$ via sub and super solution theory}

We employ \cite[Theorem 1.4 ]{BLM_maxmin} in order to construct a second positive solution distinct from $(u_1 ^*,u_2^*)$. We note that the monotonicity of $f_i$'s and the regularity result in Theorem \ref{th:bootstrap} imply that $f_i$'s satisfy condition (A4) in \cite[Theorem 1.4 ]{BLM_maxmin}.
We claim that  $(\underline{u}_1,\underline{u}_2)=(\e a\varphi_1,\e b\varphi_1)$ is a subsolution to \eqref{pde} for $\e >0$ small enough where $a= \sqrt{f_1'(0)}$, $b=\sqrt{f_2'(0)}$ and $\varphi_1$ is the first eigenfunction corresponding to the first Steklov eigenvalue $\mu_1$. 
Indeed, by letting $\zeta_1(s)=\mu_1 \frac{a}{b}s-\lambda f_1(s)$ and $\zeta_2(s)=\mu_1 \frac{b}{a}s-\lambda f_2(s)$, from Hypothesis \eqref{H0}, it clearly follows that $\zeta_1(0)=0$ and $\zeta_2(0)=0$. Moreover, by taking $\lambda > \mu_0=\frac{\mu_1}{ab}$, we get
\begin{align}
    \zeta_1'(0) &= \mu_1\frac{a}{b}-\lambda f_1'(0)\notag < \lambda ab \frac{a}{b}-\lambda a^2 \notag = 0. \notag 
\end{align}
Note that, using the fact that  $\zeta_1(0)=0 $, $\ \zeta_1'(0)<0$ and $\ \zeta_1$ is continuous, it follows that there exists $\delta>0$ such that $\zeta_1(s)<0 \mbox{ for every } 0<s<\delta$. Now, choosing $\e $ small enough such that $0< \e  b\varphi_1 < \delta$, we have that $ \zeta_1(\e  b \varphi_1)= \mu_1  \e  a  \varphi_1- \lambda f_1(\e  b \varphi_1)<0$.
Therefore, for any $\psi\geq 0$
\begin{align}\label{eq:u1}
    \displaystyle \int_{\Omega} \nabla \underline{u}_1 \nabla \psi + \underline{u}_1\psi & 
    = \mu_1 a\e  \displaystyle \int_{\partial \Om}   \varphi_1 \psi  \leq \lambda \displaystyle \int_{\p\Omega} f_1(\e  b \varphi_1) \psi=  \lambda \displaystyle \int_{\p\Omega} f_1(\underline{u}_2) \psi.
\end{align}
Similarly, one can show that for any $\psi\geq 0$
\begin{align}\label{eq:u2}
    \displaystyle \int_{\Omega} \nabla \underline{u}_2 \nabla \psi + \underline{u}_2\psi & = \displaystyle \mu_1 b \e   \int_{\partial \Om}  \varphi_1 \psi
    \leq \lambda \displaystyle \int_{\p\Omega}  f_2( \e  a \varphi_1) \psi= \lambda \displaystyle \int_{\p\Omega} f_2(\underline{u}_1) \psi.
\end{align}
Thus, equations \eqref{eq:u1} and \eqref{eq:u2} together imply that $(\underline{u}_1,\underline{u}_2)=(a\e \varphi_1,b\e \varphi_1)$ is a subsolution to \eqref{pde}. 

Next, we claim that $(\overline{u}_1,\overline{u}_2) = \big(\min\{u_1^*,u_1^0\},\min\{u_2^*,u_2^0\}\big)$ is a strict supersolution to \eqref{pde} in the sense that it is not a solution to \eqref{pde}. Observe that, $(u_1^*,u_2^*) \in Y \setminus \overline{Z} = \{(u_1,u_2) \in Y\colon u_1 \nleq u_1^0 \mbox{ or }u_2 \nleq u_2^0 \}$. 
Additionally, we have
\begin{enumerate}
    \item $(u_1^*,u_2^*)$ is a solution to \eqref{pde}, hence a supersolution.
    \item $(u_1^0,u_2^0)$ is a strict supersolution to \eqref{pde}, since $\lambda< \lambda_0$.
\end{enumerate}
Therefore, by \cite[Lem. 5.5]{BLM_maxmin}, $\big(\min\{u_1^*,u_1^0\},\min\{u_2^*,u_2^0\}\big)$ is a strict supersolution to \eqref{pde}. 

Moreover, we have $a \epsilon \varphi_1 < \min\{u_1^*,u_1^0\}$ and $b \epsilon \varphi_1 <\min \{u_2^*,u_2^0\}$ since both $(u_1^*,u_2^*)$ and $(u_1^0,u_2^0)$ in $Y$. 
This implies $(\underline{u}_1,\underline{u}_2) < (\overline{u}_1, \overline{u}_2)$.
Hence, there exists a solution (see \cite[Thm. 1.4]{BLM_maxmin}) $(\widetilde{u}_1, \widetilde{u}_2)$ of Eq. \eqref{pde}  
such that $(a \epsilon \varphi_1, b \epsilon \varphi_1) \le (\widetilde{u}_1, \widetilde{u}_2) < \big(\min\{u_1^*, u_1^0\},\min\{u_2^*,u_2^0\}\big) \le  (u_1^*,u_2^*)$.

\par Thus we have two distinct solutions $(\widetilde{u}_1, \widetilde{u}_2) < (u_1^*,u_2^*)$ for each $\lambda  \in \left(\mu_0, 
\overline{\lambda}\right)$.
\subsubsection{Step~3: The existence of a solution for $\lambda=\overline{\lambda}$.}
{\it Step 1} and {\it Step 2} imply that for each $\lambda\in \left(\mu_0, \overline{\lambda}\right)$ problem \eqref{pde} admits a positive solution $(u_{1,\lambda}, u_{2,\lambda})$. 
For $\lambda\in \left(\mu_0, \overline{\lambda}\right)$, we have,
\begin{align*}
\|u_{1,\lambda}\|_{H^1(\Omega)}^2=\int_{\Omega} \nabla u_{1,\lambda}^2+\int_{\Omega} u_{1,\lambda}^2=\lambda\int_{\partial \Omega} u_{1,\lambda} \, f_1(u_{2,\lambda})\leq C,\\
\|u_{2,\lambda}\|_{H^1(\Omega)}^2=\int_{\Omega} \nabla u_{2,\lambda}^2+\int_{\Omega} u_{2,\lambda}^2=\lambda\int_{\partial \Omega} u_{2,\lambda} \, f_2(u_{1,\lambda})\leq C,
\end{align*}
and by the reflexivity of the Sobolev space $(H^1(\Omega))^2$, $(u_{1,\lambda}, u_{2,\lambda})$ has a subsequence that converges weakly  to $(u_{1,\overline{\lambda}}, u_{2,\overline{\lambda}})$ in $(H^1(\Omega))^2$ as $\lambda\rightarrow \overline{\lambda}$.
Thereafter, taking the limit in the weak  formulation of $(u_{1,\lambda}, u_{2,\lambda})$ as $\lambda\rightarrow \overline{\lambda}$, we get
\begin{align*}
\int_{\Omega} \nabla u_{1,\overline{\lambda}}\nabla \psi_1 +\int_{\Omega} u_{1,\overline{\lambda}}\psi_1&=\overline{\lambda} \int_{\partial\Omega} f_1(u_{2,\overline{\lambda}})\psi_1\,,\\ 
\int_{\Omega} \nabla u_{2,\overline{\lambda}}\nabla \psi_2 +\int_{\Omega} u_{2,\overline{\lambda}}\psi_2&=\overline{\lambda} \int_{\partial\Omega} f_2(u_{1,\overline{\lambda}})\psi_2.	
\end{align*}
Hence, $(u_{1,\overline{\lambda}}, u_{2,\overline{\lambda}})$ is a weak  positive solution to \eqref{pde}.

Combining Theorem \ref{th:bootstrap}, equation \eqref{eq:bound-norm-u} and Proposition \ref{th:Rossi}, we can find a uniform constant $C>0$ such that
\[
\|(u_{1,\lambda}, u_{2,\lambda})\|_{(C^{\alpha}(\overline{\Omega}))^2}\leq C, \quad \text{ for any }\quad\lambda\in \left(\mu_0, \overline{\lambda}\right).
\]
The compact embeddings of H\"older spaces guarantee that $(u_{1,\lambda}, u_{2,\lambda})$ has a convergent subsequence converging to $(u_{1,\overline{\lambda}}, u_{2,\overline{\lambda}})$ in $(C^{\beta}(\overline{\Omega}))^2$ as $\lambda\rightarrow \overline{\lambda}$ and $\beta<\alpha$. 

Then, since $(u_{1,\lambda}, u_{2,\lambda})\rightarrow (u_{1,\overline{\lambda}}, u_{2,\overline{\lambda}})$ in $(C^{\beta}(\overline{\Omega}))^2$ and $f_i$ are  Hölder continuous,  $f_1(u_{2,\lambda})\rightarrow f_1(u_{2,\overline{\lambda}})$ and $f_2(u_{1,\lambda})\rightarrow f_2(u_{1,\overline{\lambda}})$, respectively, in $C^{\beta'}(\overline{\Omega})$ as $\lambda\rightarrow \overline{\lambda}$.  In fact, the problem \eqref{pde} has at least two positive solutions for $\lambda\in \left(\mu_0, \overline{\lambda}\right)$ and at least one positive solution for $\lambda=\overline{\lambda}$.

\subsubsection{Step~4: The existence of a positive solution for $\lambda=\mu_0$.}
In the final step, note that, since the connected set $\mathscr{C}^+$ bifurcates from trivial solution to the right at $\lambda = \mu_0$, and bifurcates from infinity at $\lambda=0$, we conclude that $\mathscr{C}^+$ must cross the hyperplane $\lambda=\mu_0$ at a point distinct to zero. Consequently, the problem \eqref{pde} has a positive weak solution at $\lambda=\mu_0$, as we desired.

\par This completes the proof of the theorem.
\qed

\section*{Acknowledgements}
This material is based upon work supported by the National Science Foundation under Grant No. 1440140, while the authors were in residence at the Mathematical Sciences Research Institute in Berkeley, California, during the month of June of 2022. Chhetri was supported by Simons Foundation Grant No.~965180, R. Pardo was supported by PID2022-137074NB-I00,  MICINN,  Spain, and by UCM, Spain,  Grupo 920894. Bandyopadhyay was supported by the AMS-Simons Travel Award.
N. Mavinga was supported by AMS-Simons Research Enhancement Grants for Primarily Undergraduate Institution (PUI) Faculty, 2024 - 2027.\\

%\section*{Statements and Declarations}
%\noindent {\bf Data availability} Not applicable.\\
%{\bf Conflict of interest}: The authors declare no conflict of interest.

\noindent {\bf Author Contributions}

S.B., M.C., B.D., N.M. and R.P. wrote the main manuscript text. All authors reviewed the manuscript.\\

\noindent {\bf Data availability} 

No datasets were generated or analyzed during the current study.\\

\section*{Declarations}
{\bf Conflict of interest}: The authors declare no conflict of interest.
    
\bigskip
\bibliographystyle{abbrv}
\bibliography{references}

@article {Ama_1971,
	AUTHOR = {Amann, Herbert},
	TITLE = {On the existence of positive solutions of nonlinear elliptic
		boundary value problems},
	JOURNAL = {Indiana Univ. Math. J.},
	FJOURNAL = {Indiana University Mathematics Journal},
	VOLUME = {21},
	YEAR = {1971/72},
	PAGES = {125--146},
	ISSN = {0022-2518},
	MRCLASS = {35J60},
	MRNUMBER = {296498},
	MRREVIEWER = {M. Wachman},
	DOI = {10.1512/iumj.1971.21.21012},
	URL = {https://doi.org/10.1512/iumj.1971.21.21012},
}

@article {Ko_Lee_Sim,
	AUTHOR = {Ko, Eunkyung and Lee, Eun Kyoung and Sim, Inbo},
	TITLE = {Existence of positive solution to {S}chr\"{o}dinger-type
		semipositone problems with mixed nonlinear boundary
		conditions},
	JOURNAL = {Taiwanese J. Math.},
	FJOURNAL = {Taiwanese Journal of Mathematics},
	VOLUME = {25},
	YEAR = {2021},
	NUMBER = {1},
	PAGES = {107--124},
	ISSN = {1027-5487},
	MRCLASS = {35J66 (35J20)},
	MRNUMBER = {4201423},
	DOI = {10.11650/tjm/200702},
	URL = {https://doi.org/10.11650/tjm/200702},
}

@article {Delgado_Pardo,
    AUTHOR = {Delgado, Briceyda B. and Pardo, Rosa},
     TITLE = {Resonant solutions for elliptic systems with {N}eumann
              boundary conditions},
   JOURNAL = {Electron. J. Differential Equations},
  FJOURNAL = {Electronic Journal of Differential Equations},
      YEAR = {2023},
    NUMBER = {2, Special issue},
     PAGES = {109--124},
   MRCLASS = {35J57 (35B32 35J15 35J61)},
  MRNUMBER = {4803581},
MRREVIEWER = {Jari Taskinen},
}

@article {Fleckinger_Pardo_deThelin,
    AUTHOR = {Fleckinger, Jacqueline and Pardo, Rosa and de Th\'{e}lin,
              Fran\c{c}ois},
     TITLE = {Four-parameter bifurcation for a {$p$}-{L}aplacian system},
   JOURNAL = {Electron. J. Differential Equations},
  FJOURNAL = {Electronic Journal of Differential Equations},
      YEAR = {2001},
     PAGES = {No. 6, 15},
   MRCLASS = {35J60 (35J45 35J55 35J65 35P30)},
  MRNUMBER = {1811779},
}

@article {Fleckinger_Pardo,
    AUTHOR = {Fleckinger, Jacqueline and Pardo, Rosa},
     TITLE = {Bifurcation for an elliptic system coupled in the linear part},
   JOURNAL = {Nonlinear Anal.},
  FJOURNAL = {Nonlinear Analysis. Theory, Methods \& Applications. An
              International Multidisciplinary Journal},
    VOLUME = {37},
      YEAR = {1999},
    NUMBER = {1, Ser. B: Real World Appl.},
     PAGES = {13--30},
      ISSN = {0362-546X},
   MRCLASS = {35J65 (35B32 47J15 58E07)},
  MRNUMBER = {1690172},
MRREVIEWER = {Jie Yang},
       DOI = {10.1016/S0362-546X(98)00138-2},
       URL = {https://doi.org/10.1016/S0362-546X(98)00138-2},
}

@article {Amb-Arc-Buf_1994,
	AUTHOR = {Ambrosetti, A. and Arcoya, D. and Buffoni, B.},
	TITLE = {Positive solutions for some semi-positone problems via
		bifurcation theory},
	JOURNAL = {Differential Integral Equations},
	FJOURNAL = {Differential and Integral Equations. An International Journal
		for Theory and Applications},
	VOLUME = {7},
	YEAR = {1994},
	NUMBER = {3-4},
	PAGES = {655--663},
	ISSN = {0893-4983},
	MRCLASS = {35B32 (35B05 35J65)},
	MRNUMBER = {1270096},
	MRREVIEWER = {Jolanta Przybycin},
}

@article {Arrieta-Pardo-RBernal_2007,
	AUTHOR = {Arrieta, Jos\'{e} M. and Pardo, Rosa and Rodr\'{\i}guez-Bernal, Anibal},
	TITLE = {Bifurcation and stability of equilibria with asymptotically
		linear boundary conditions at infinity},
	JOURNAL = {Proc. Roy. Soc. Edinburgh Sect. A},
	FJOURNAL = {Proceedings of the Royal Society of Edinburgh. Section A.
		Mathematics},
	VOLUME = {137},
	YEAR = {2007},
	NUMBER = {2},
	PAGES = {225--252},
	ISSN = {0308-2105},
	MRCLASS = {35K60 (35B32 35B35 35J65)},
	MRNUMBER = {2360769},
	MRREVIEWER = {Yoshio Yamada},
	DOI = {10.1017/S0308210505000363},
}

@article {BLM_maxmin,
	AUTHOR = {Bandyopadhyay, Shalmali and Lewis, Tom and Mavinga, Nsoki},
	TITLE = {Existence of maximal and minimal weak solutions and finite difference approximation for elliptic systems with nonlinear boundary conditions},
    JOURNAL = {Electron. J. Differential Equations},
	FJOURNAL = {},
	VOLUME = {2025},
	YEAR = {2025},
	NUMBER = {43},
	PAGES = {1-21},
	ISSN = {},
	MRCLASS = {},
	MRNUMBER = {},
	MRREVIEWER = {},
    eprint={},
	DOI = {},
}

@article{BCDMP_nonlbc,
  title={Bifurcation and multiplicity results for elliptic problems with subcritical nonlinearity on the boundary},
  author={Bandyopadhyay, Shalmali and Chhetri, Maya and Delgado, Briceyda B and Mavinga, Nsoki and Pardo, Rosa},
  journal={Journal of Differential Equations},
  volume={411},
  pages={28--50},
  year={2024},
  publisher={Academic Press}
}

@article {Bon-Ros_2001,
	AUTHOR = {Fern\'{a}ndez Bonder, Juli\'{a}n and Rossi, Julio D.},
	TITLE = {Existence for an elliptic system with nonlinear boundary
		conditions via fixed-point methods},
	JOURNAL = {Adv. Differential Equations},
	FJOURNAL = {Advances in Differential Equations},
	VOLUME = {6},
	YEAR = {2001},
	NUMBER = {1},
	PAGES = {1--20},
	ISSN = {1079-9389},
	MRCLASS = {35J65 (35D05 35J25 35J55)},
	MRNUMBER = {1799678},
	MRREVIEWER = {Salvatore A. Marano},
}

@Article{C-R,
	Author = {M. G. {Crandall} and P. H. {Rabinowitz}},
	Title = {{Bifurcation from simple eigenvalues.}},
	FJournal = {{Journal of Functional Analysis}},
	Journal = {{J. Funct. Anal.}},
	ISSN = {0022-1236},
	Volume = {8},
	Pages = {321--340},
	Year = {1971},
	Publisher = {Elsevier, Amsterdam},
	Language = {English},
	MSC2010 = {46B99 58E07},
	Zbl = {0219.46015}
}

@article {deF-Lion_Nus_1982,
	AUTHOR = {de Figueiredo, D. G. and Lions, P.-L. and Nussbaum, R. D.},
	TITLE = {A priori estimates and existence of positive solutions of
		semilinear elliptic equations},
	JOURNAL = {J. Math. Pures Appl. (9)},
	FJOURNAL = {Journal de Math\'{e}matiques Pures et Appliqu\'{e}es. Neuvi\`eme S\'{e}rie},
	VOLUME = {61},
	YEAR = {1982},
	NUMBER = {1},
	PAGES = {41--63},
	ISSN = {0021-7824},
	MRCLASS = {35J60},
	MRNUMBER = {664341},
	MRREVIEWER = {Wei Ming Ni},
}

@book {Gil-Trud_2001,
	AUTHOR = {Gilbarg, David and Trudinger, Neil S.},
	TITLE = {Elliptic partial differential equations of second order},
	SERIES = {Classics in Mathematics},
	NOTE = {Reprint of the 1998 edition},
	PUBLISHER = {Springer-Verlag, Berlin},
	YEAR = {2001},
	PAGES = {xiv+517},
	ISBN = {3-540-41160-7},
	MRCLASS = {35-02 (35Jxx)},
	MRNUMBER = {1814364},
}

@book {KufnerJohnFucik77,
	AUTHOR = {Kufner, Alois and John, Old\v{r}ich and Fu\v{c}\'{\i}k, Svatopluk},
	TITLE = {Function spaces},
	NOTE = {Monographs and Textbooks on Mechanics of Solids and Fluids;
		Mechanics: Analysis},
	PUBLISHER = {Noordhoff International Publishing, Leyden; Academia, Prague},
	YEAR = {1977},
	PAGES = {xv+454},
	ISBN = {90-286-0015-9},
	MRCLASS = {46EXX},
	MRNUMBER = {0482102},
	MRREVIEWER = {Richard Bagby},
}

@article{Lieberman_1988,
title = {Boundary regularity for solutions of degenerate elliptic equations},
journal = {Nonlinear Analysis: Theory, Methods \& Applications},
volume = {12},
number = {11},
pages = {1203-1219},
year = {1988},
issn = {0362-546X},
doi = {https://doi.org/10.1016/0362-546X(88)90053-3},
url = {https://www.sciencedirect.com/science/article/pii/0362546X88900533},
author = {Gary M. Lieberman},
keywords = {Elliptic equations, degenerate equations, boundary value problems}
}

@article {Mav_2012,
	AUTHOR = {Mavinga, N.},
	TITLE = {Generalized eigenproblem and nonlinear elliptic equations with
		nonlinear boundary conditions},
	JOURNAL = {Proc. Roy. Soc. Edinburgh Sect. A},
	FJOURNAL = {Proceedings of the Royal Society of Edinburgh. Section A.
		Mathematics},
	VOLUME = {142},
	YEAR = {2012},
	NUMBER = {1},
	PAGES = {137--153},
	ISSN = {0308-2105},
	MRCLASS = {35J66 (35D30 35J20 35J91)},
	MRNUMBER = {2887646},
	MRREVIEWER = {Nikos Labropoulos},
	DOI = {10.1017/S0308210510000065},
	URL = {https://doi.org/10.1017/S0308210510000065},
}

@article {Mavinga-Pardo_PRSE_2017,
	AUTHOR = {Mavinga, Nsoki and Pardo, Rosa},
	TITLE = {Bifurcation from infinity for reaction-diffusion equations
		under nonlinear boundary conditions},
	JOURNAL = {Proc. Roy. Soc. Edinburgh Sect. A},
	FJOURNAL = {Proceedings of the Royal Society of Edinburgh. Section A.
		Mathematics},
	VOLUME = {147},
	YEAR = {2017},
	NUMBER = {3},
	PAGES = {649--671},
	ISSN = {0308-2105},
	MRCLASS = {35J66 (35B32 35J91 35P30)},
	MRNUMBER = {3656708},
	MRREVIEWER = {Nikos Labropoulos},
	DOI = {10.1017/S0308210516000251},
	URL = {https://doi.org/10.1017/S0308210516000251},
}

@article {Castro_Pardo_PJM_2012,
    AUTHOR = {Castro, Alfonso and Pardo, Rosa},
     TITLE = {Resonant solutions and turning points in an elliptic problem
              with oscillatory boundary conditions},
   JOURNAL = {Pacific J. Math.},
  FJOURNAL = {Pacific Journal of Mathematics},
    VOLUME = {257},
      YEAR = {2012},
    NUMBER = {1},
     PAGES = {75--90},
      ISSN = {0030-8730},
   MRCLASS = {35J65 (35B05 35B32 35P05 58E07)},
  MRNUMBER = {2948459},
MRREVIEWER = {Srinivasan Kesavan},
       DOI = {10.2140/pjm.2012.257.75},
       URL = {https://doi.org/10.2140/pjm.2012.257.75},
}

@Article{R71,
	Author = {P. H. {Rabinowitz}},
	Title = {{Some global results for nonlinear eigenvalue problems.}},
	FJournal = {{Journal of Functional Analysis}},
	Journal = {{J. Funct. Anal.}},
	ISSN = {0022-1236},
	Volume = {7},
	Pages = {487--513},
	Year = {1971},
	Publisher = {Elsevier, Amsterdam},
	Language = {English},
	MSC2010 = {47-02 46B99 47J05},
	Zbl = {0212.16504}
}

@article {Chh-Girg-2009,
    AUTHOR = {Chhetri, Maya and Girg, Petr},
     TITLE = {Existence and nonexistence of positive solutions for a class
              of superlinear semipositone systems},
   JOURNAL = {Nonlinear Anal.},
  FJOURNAL = {Nonlinear Analysis. Theory, Methods \& Applications. An
              International Multidisciplinary Journal},
    VOLUME = {71},
      YEAR = {2009},
    NUMBER = {10},
     PAGES = {4984--4996},
      ISSN = {0362-546X},
   MRCLASS = {35J57 (35A01 35J91)},
  MRNUMBER = {2548730},
       DOI = {10.1016/j.na.2009.03.070},
       URL = {https://doi.org/10.1016/j.na.2009.03.070},
}

@article {Chh-Gir-2013,
    AUTHOR = {Chhetri, Maya and Girg, Petr},
     TITLE = {Existence of positive solutions for a class of superlinear
              semipositone systems},
   JOURNAL = {J. Math. Anal. Appl.},
  FJOURNAL = {Journal of Mathematical Analysis and Applications},
    VOLUME = {408},
      YEAR = {2013},
    NUMBER = {2},
     PAGES = {781--788},
      ISSN = {0022-247X},
   MRCLASS = {35J57 (35B32 35J91)},
  MRNUMBER = {3085072},
MRREVIEWER = {Rosa Pardo},
       DOI = {10.1016/j.jmaa.2013.06.041},
       URL = {https://doi.org/10.1016/j.jmaa.2013.06.041},
}

@article {R73,
	AUTHOR = {Rabinowitz, Paul H.},
	TITLE = {On bifurcation from infinity},
	JOURNAL = {J. Differential Equations},
	FJOURNAL = {Journal of Differential Equations},
	VOLUME = {14},
	YEAR = {1973},
	PAGES = {462--475},
	ISSN = {0022-0396},
	MRCLASS = {47H15 (34B15)},
	MRNUMBER = {0328705 (48 \#7047)},
	MRREVIEWER = {H. Amann},
}

@article {ANZ,
    AUTHOR = {Allegretto, W. and Nistri, P. and Zecca, P.},
     TITLE = {Positive solutions of elliptic nonpositone problems},
   JOURNAL = {Differential Integral Equations},
  FJOURNAL = {Differential and Integral Equations. An International Journal
              for Theory and Applications},
    VOLUME = {5},
      YEAR = {1992},
    NUMBER = {1},
     PAGES = {95--101},
      ISSN = {0893-4983},
   MRCLASS = {35J60 (35B05 35J25)},
  MRNUMBER = {1141729},
MRREVIEWER = {Yasuhiro Furusho},
}

\end{document}